\documentclass[11pt]{amsart}
\usepackage{amsmath,amssymb,amsthm,mathtools}
\usepackage[margin=25mm]{geometry}
\usepackage{enumitem}
\usepackage{tikz-cd}
\usepackage[hidelinks]{hyperref}

\newcommand{\R}{\mathbb{R}}
\newcommand{\Z}{\mathbb{Z}}
\newcommand{\Q}{\mathbb{Q}}
\newcommand{\Tor}{\mathbb{T}}
\newcommand{\Dcat}{\mathsf{Diff}}
\newcommand{\Elast}{\mathsf{Elast}}
\newcommand{\Elastc}{\mathsf{Elast}_{\mathrm{curve}}}
\newcommand{\Fam}{\mathcal{F}}
\newcommand{\VF}{\mathfrak{X}}

\DeclareMathOperator{\id}{id}
\DeclareMathOperator{\pr}{pr}
\DeclareMathOperator{\dom}{dom}
\DeclareMathOperator{\divi}{div}

\DeclareMathOperator{\Path}{Path}

\theoremstyle{plain}
\newtheorem{thm}{Theorem}[section]
\newtheorem{prop}[thm]{Proposition}
\newtheorem{lem}[thm]{Lemma}
\newtheorem{cor}[thm]{Corollary}
\newtheorem{mainthm}{Theorem}

\theoremstyle{definition}
\newtheorem{defn}[thm]{Definition}
\newtheorem{ex}[thm]{Example}
\newtheorem{rem}[thm]{Remark}
\newtheorem{conv}[thm]{Convention}

\newtheorem{fact}[thm]{Fact}

\numberwithin{equation}{section}

\title{Riemannian Structures on Quotients in Elastic Diffeology via Q-Charts}
\author[Y. Shiobara]{Yusuke Shiobara}
\address{%
  Department of Science and Technology,
  Graduate School of Medicine, Science and Technology,
  Shinshu University,
  Matsumoto, Nagano 390-8621, Japan}
\email{24hs603d@shinshu-u.ac.jp}
\subjclass[2020]{Primary 58A40; Secondary 18F40, 53C05, 53C22, 58H05}
\keywords{diffeology, elastic diffeological space, Q-chart, tangent category,
connection, Levi-Civita connection, geodesic, irrational torus}

\begin{document}

\begin{abstract}
Following Miyamoto, we consider Q-charts, a class of subductions $\phi\colon X\to Y$ of diffeological spaces along which Blohmann's elasticity descends.
We show that the Riemannian apparatus of elastic diffeology---vertical, horizontal and full connections, Riemannian metrics, Levi-Civita connections and geodesics---also descends along Q-charts.
More precisely, structures on $Y$ correspond bijectively to structures on $X$ that are invariant under the pseudogroup $\Psi(\phi)$ of fibre-preserving transitions, and torsion-freeness, effectivity, flatness, fullness, the Levi-Civita property and the geodesic equation are preserved in both directions.
The key input is that the iterated tangent bundles $T^nX$ and their fibre products $T_kX$ are the pullbacks along $\phi$ of the corresponding bundles over $Y$.

For quotients by principal actions of diffeologically discrete groups, we lift geodesics globally, prove that the class of elastic spaces on which $\R$ is a curve object is closed under such quotients, and show that complete geodesic flows descend.
As an application, we determine the Riemannian geometry of the irrational tori $\Tor_\alpha=\R/(\Z+\alpha\Z)$ completely.
Let $\vartheta$ be the one-form on $\Tor_\alpha$ induced by $dt$ on $\R$.
Every Riemannian metric is a positive constant multiple of $\vartheta^2$, and the vertical connections form a one-parameter family of flat effective connections, a single member of which is the unique Levi-Civita connection of every metric.
This connection has a complete geodesic flow, yet the induced pseudo-distance vanishes identically.
\end{abstract}

\maketitle

% =====================================================================
\section{Introduction}\label{sec:intro}
% =====================================================================
Diffeology, introduced by Souriau and systematically developed by Iglesias-Zemmour~\cite{PIZ13}, generalizes smooth manifolds: the category $\Dcat$ of diffeological spaces contains the manifolds as a full subcategory and is complete, cocomplete and cartesian closed, so that singular quotients and many infinite-dimensional spaces are treated on the same footing.
While differential forms, de~Rham theory and homotopy theory \cite{PIZ13,Kihara} extend well to this setting, Riemannian geometry is less developed, partly because the various notions of tangent space on diffeological spaces do not agree in general~\cite{Taho}.
Satisfactory notions of Riemannian metric are available through the work of Iglesias-Zemmour~\cite{PIZ25} and its interpretation via Blohmann's tangent functor in~\cite{KSS}.
Connections, geodesics and variational theory, however, require a tangent structure in the sense of Cockett and Cruttwell~\cite{CC17}, and Blohmann introduced elastic diffeological spaces precisely so that his tangent functor $T$, the left Kan extension of the tangent functor of Euclidean domains, carries such a structure on the category $\Elast$ of elastic spaces~\cite{Blo}.

Building on this structure, the author developed in~\cite{Shio} Riemannian geometry on elastic spaces: connections, Riemannian metrics, Levi-Civita connections, curvature, geodesics and the first variation formula, with mapping spaces as the principal class of examples.
Quotient spaces form another basic class of examples in diffeology, and they are the subject of this paper.

Quotients are delicate for the tangent functor.
The orbifold $\R/\{\pm1\}$ is not elastic \cite[Ex.~3.11]{Miy}, so no tangent structure, let alone a connection, is available on it.
Miyamoto \cite{Miy} isolated a class of quotient maps, the \emph{Q-charts} (Definition~\ref{def:qchart}; the notion goes back to Barre, see \cite[Rem.~3.15]{Miy}), along which elasticity descends \cite[Thm.~3.24]{Miy}; examples include quotients by free actions of countable groups and quotients $G\to G/N$ of diffeological groups by diffeologically discrete subgroups \cite[Props.~3.16, 3.17]{Miy}, in particular the irrational tori $\Tor_\alpha=\R/(\Z+\alpha\Z)$, $\alpha\notin\Q$.
Our first result says that the Riemannian structures of \cite{Shio} descend along Q-charts, and that the descent is a bijection onto the structures on the quotient.

For a Q-chart $\phi\colon X\to Y$ let $\Psi(\phi)$ be the pseudogroup of transitions $h$ of $X$ with $\phi\circ h=\phi$ (Definition~\ref{def:transition}).
A structure on $X$ (a morphism between iterated tangent bundles of $X$) is \emph{$\Psi(\phi)$-invariant} if it commutes with the tangent maps of all $h\in\Psi(\phi)$ (Definition~\ref{def:psi-invariant}).

\begin{mainthm}[Theorem~\ref{thm:transport}]\label{mainthm:A}
Let $\phi\colon X\to Y$ be a Q-chart with $X$ elastic.
For every $\Psi(\phi)$-invariant vertical connection $K$, horizontal connection $H$ and Riemannian metric $g$ on $X$ there exist a unique vertical connection $K_Y$, a unique horizontal connection $H_Y$ and a unique Riemannian metric $g_Y$ on $Y$ such that
\[
  K_Y\circ T^2\phi=T\phi\circ K,\qquad
  H_Y\circ T_2\phi=T^2\phi\circ H,\qquad
  g_Y\circ T_2\phi=g .
\]
Conversely, every vertical connection, horizontal connection and Riemannian metric on $Y$ arises in this way from a unique $\Psi(\phi)$-invariant one on $X$.
Moreover $K$ is torsion-free, effective or flat if and only if $K_Y$ is; $(K,H)$ is a full connection if and only if $(K_Y,H_Y)$ is; $K$ is a Levi-Civita connection of $g$ if and only if $K_Y$ is one of $g_Y$; each $T_x\phi$ is a linear isometry; and a path $\gamma$ in $X$ is a geodesic of $K$ if and only if $\phi\circ\gamma$ is a geodesic of $K_Y$.
\end{mainthm}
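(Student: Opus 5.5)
The plan is to reduce everything to one structural fact, which the abstract identifies as the key input. For a Q-chart $\phi\colon X\to Y$ with $X$ elastic, the squares
\[
T^nX\to T^nY\ \text{over}\ X\to Y,\qquad T_kX\to T_kY\ \text{over}\ X\to Y
\]
are pullbacks in $\Dcat$. Moreover, $T^n\phi$ and $T_k\phi$ are again subductions, in fact Q-charts whose transition pseudogroups are generated by the $T^nh$ with $h\in\Psi(\phi)$.

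I would prove this first, by induction on $n$. Since $\phi$ is locally modelled on the charts of Definition~\ref{def:qchart}, $T$ can be computed locally. Because $T$ is a left Kan extension, it preserves the relevant colimits, and \cite[Thm.~3.24]{Miy} gives $TY$ as a quotient of $TX$. Together these show that $TX\cong X\times_Y TY$, with $T\phi$ a Q-chart. Iterating gives the statement for $T^nX$. For $T_kX=TX\times_X\cdots\times_X TX$, I would use that pullbacks commute with pullbacks. \textbf{I expect this step to be the main obstacle}: one must check that Blohmann's $T$ commutes with the local pullback structure of a Q-chart. Elasticity of $X$ is needed here to guarantee that the fibres of $TX\to X$ behave well.

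With the pullback lemma in hand, descent is the universal property of subductions. A morphism $F\colon T^aX\to T^bX$ induces a morphism $T^aY\to T^bY$ with $F_Y\circ T^a\phi=T^b\phi\circ F$ if and only if $T^b\phi\circ F$ is constant on the fibres of $T^a\phi$. Those fibres are exactly the orbits of the local maps $T^ah$, $h\in\Psi(\phi)$. So $\Psi(\phi)$-invariance is precisely this constancy condition, and smoothness of $F_Y$ follows because $T^a\phi$ is a subduction. Uniqueness follows because $T^a\phi$ is an epimorphism. The same argument applied to $g\colon T_2X\to\R$ gives $g_Y$.

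For the converse, take $F_Y$ on $Y$. The pullback property makes $T^b\phi$ a pullback of $T^bY\to Y$, so I define $F$ as the unique map into the pullback with components $F_Y\circ T^a\phi$ and the projection $T^aX\to X$. Invariance of $F$ under $\Psi(\phi)$ is automatic, since $\phi\circ h=\phi$. The two constructions are mutually inverse by uniqueness on both sides.

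The defining axioms are all equations between composites of tangent-structure maps and the structure morphisms: linearity of $K$ and $H$ in the appropriate bundle structures, the projections, bilinearity, symmetry and positivity of $g$. These equations are preserved by descent, because tangent-structure maps (the canonical flip $c$, $+$, $0$, $p$) are natural and hence commute with $T^n\phi$. They are reflected because $T^n\phi$ is epi and, in the other direction, a fibrewise isomorphism.

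The same reasoning gives each property in the list:
\begin{itemize}
\item \emph{Torsion-freeness} ($K\circ c=K$): the equation transfers directly.
\item \emph{Flatness}: the curvature is built from $T K$, the flip and $K$, and $T(T\phi)$ intertwines these, so the equation transfers.
\item \emph{Fullness}: it says that $(K,H)$ induce mutually inverse isomorphisms with $T_3X$. Such isomorphisms are pullbacks of the corresponding ones on $Y$, and conversely.
\item \emph{Effectivity}: this is a fibrewise statement, so it holds on $X$ iff on $Y$, since $T_x\phi$ is an isomorphism onto $T_{\phi(x)}Y$ by the pullback lemma.
\item \emph{Linear isometry}: because $T_x\phi$ is an isomorphism and $g_Y\circ T_2\phi=g$, each $T_x\phi$ is a linear isometry.
\item \emph{Levi-Civita property} (metric compatibility plus torsion-freeness): this is an equation involving $Tg$, $K$ and $T_2$, so it transfers both ways.
\end{itemize}

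Finally, consider geodesics. A path $\gamma$ is a geodesic when $K\circ \ddot\gamma=0\circ\gamma$, where $\ddot\gamma=T^2\gamma\circ(\text{unit second jet})$. Then $T^2\phi\circ\ddot\gamma$ is the second derivative of $\phi\circ\gamma$, so applying $T\phi$ gives
\[
K_Y\bigl(\ddot{(\phi\gamma)}\bigr)=T\phi\bigl(K(\ddot\gamma)\bigr).
\]
Since $T\phi$ is injective on fibres, this vanishes exactly when $K(\ddot\gamma)$ does.
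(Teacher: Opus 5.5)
Your architecture matches the paper's. It runs through pullback squares $FX\cong X\times_Y FY$, descent of invariant maps along the subductions $F\phi$, lifting through the pullback, transfer of the equational axioms, and geodesics via injectivity of $T_x\phi$. The gap is the step everything rests on: you do not establish the pullback lemma, and the mechanism you sketch would not establish it.

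A Q-chart need not be a local diffeomorphism (Remark~\ref{rem:not-local-diffeo}: $q\colon\R\to\Tor_\alpha$ is injective on no open set). So a local computation of $T$ on $X$ does not by itself describe $TY$. Even granting that $T\phi$ is a quotient map (indeed a Q-chart), that does not make the square a pullback. You need two things:
\begin{enumerate}[label=(\alph*)]
\item bijectivity of each $T_x\phi\colon T_xX\to T_{\phi(x)}Y$, which must be imported from Miyamoto's Lemma~3.20 (Lemma~\ref{lem:miy-toolkit}(ii));
\item smoothness of the inverse of $(\pi_X,T\phi)\colon TX\to X\times_Y TY$.
\end{enumerate}
The missing idea for (b) is axiom (Q2). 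Take a plot $(Q,R)$ of $X\times_Y TY$ and a point $u_0$. Lift $R$ locally along the subduction $T\phi$ to $R'$. Pick $h\in\Psi(\phi)$ with $h(\pi_XR'(u_0))=Q(u_0)$ (Lemma~\ref{lem:miy-germ}), and after shrinking replace $R'$ by $Th\circ R'$, which still lifts $R$. Then $u\mapsto(Q(u),\pi_X\,Th\,R'(u))$ is a plot of $X\times_\phi X$ meeting the diagonal at $u_0$. Since the diagonal is D-open, $\pi_X\circ Th\circ R'=Q$ near $u_0$. With this, $T^n$ follows by induction (using that $T^{n-1}\phi$ is a Q-chart) and $T_k$ componentwise, as you indicate.

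Your treatment of effectivity is also wrong as stated. Effectivity says that $\Theta^K$ is a diffeomorphism, and in $\Dcat$ a smooth bijection need not have a smooth inverse. So bijectivity of the $T_x\phi$ only settles bijectivity of $\Theta^K$. The correct argument uses $\Theta^{K_Y}_Y\circ T^2\phi=T_3\phi\circ\Theta^K_X$, with $\Theta^K_X$ invariant and over $X$. It also uses that descent and lifting respect composition and identities (Proposition~\ref{prop:descent}(iii)):
\begin{itemize}
\item the inverse of $\Theta^K_X$ is again invariant, so it descends along the subduction $T_3\phi$ to a smooth inverse of $\Theta^{K_Y}_Y$;
\item conversely, the lift of $(\Theta^{K_Y}_Y)^{-1}$ through the pullback is a smooth inverse of $\Theta^K_X$.
\end{itemize}

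Two smaller points. First, you need not recast fullness as an isomorphism with $T_3X$, a characterization you do not prove. (F1) and (F2) are equations and transfer directly, once you note $\mu_Y\circ T_2\phi=T^2\phi\circ\mu_X$ and that $T^2\phi$ is additive for the bundle $\pi_{TM}$. Second, for axioms (e), (f) and the Levi-Civita identity, the descending direction needs surjectivity of $\id_\R\times F\phi$ and of $T(T_2\phi)$, not of $T^n\phi$. The latter holds because $T$ preserves subductions.
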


The proof rests on two facts.
The first is formal: subductions are the coequalizers of their kernel pairs, so $\Psi(\phi)$-invariant morphisms descend (Proposition~\ref{prop:descent}).
The second is that a Q-chart induces pullback squares at the tangent level: for $F=T^n$ and $F=T_k$ the map $(\pi,F\phi)\colon FX\to X\times_Y FY$ is a diffeomorphism (Proposition~\ref{prop:etale}), although a Q-chart need not be a local diffeomorphism (Remark~\ref{rem:not-local-diffeo}).
This allows us to lift structures from $Y$ to $X$ and to detect equalities on $X$ after applying $F\phi$.
Both directions of Theorem~\ref{mainthm:A} then reduce to the naturality of the structure maps of the tangent category.

What Theorem~\ref{mainthm:A} does not provide is the global lifting of curves, which is needed to compare solutions of differential equations on $X$ and $Y$.
It becomes available when $\phi$ is the quotient map of a principal action of a diffeologically discrete group $\Lambda$, i.e.\ a free action whose division map is smooth (Definition~\ref{def:principal-action}).
Let $\Elastc$ denote the class of elastic spaces on which $\R$ is a curve object in the sense of Cockett--Cruttwell--Lemay \cite{CCL}, the natural setting for the geodesic theory of \cite[\S6]{Shio}.

\begin{mainthm}[Section~\ref{sec:quotient-group}]\label{mainthm:B}
Let a diffeologically discrete group $\Lambda$ act principally on an elastic space $X$, and let $\phi\colon X\to Y=X/\Lambda$ be the quotient map.
Then $\phi$ is a Q-chart, and $\Psi(\phi)$-invariance means $\Lambda$-invariance.
Let $(K,H)$ be a $\Lambda$-invariant full connection and let $g$ be a $\Lambda$-invariant Riemannian metric.
\begin{enumerate}[label=\textup{(\roman*)},leftmargin=*]
\item Every geodesic of $K_Y$ lifts to a geodesic of $K$, uniquely once the initial point is fixed; the geodesics of $Y$ correspond bijectively to the $\Lambda$-orbits of geodesics of $X$.
\item If $X\in\Elastc$, then $Y\in\Elastc$.
\item If $X\in\Elastc$ and the geodesic spray of $(K,H)$ is complete, then so is that of $(K_Y,H_Y)$; the geodesic flows are intertwined by $T\phi$, and every $w\in TY$ is the initial velocity of exactly one geodesic of $Y$.
\end{enumerate}
\end{mainthm}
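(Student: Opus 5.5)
The plan is to prove the opening claims first and then (i)--(iii) in order, each part using the previous one.

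\emph{Q-chart and invariance.} That $\phi$ is a Q-chart is essentially Miyamoto's result for free actions of diffeologically discrete groups \cite[Props.~3.16, 3.17]{Miy}. I would reprove it directly from Definition~\ref{def:qchart}, using smoothness of the division map $\delta\colon X\times_Y X\to\Lambda$. For a plot $p\colon U\to X\times_Y X$, the map $\delta\circ p$ is locally constant because $\Lambda$ is diffeologically discrete. Hence locally $p_2=\lambda\cdot p_1$ for a fixed $\lambda\in\Lambda$, so every transition in $\Psi(\phi)$ agrees locally with the action of some $\lambda$. Consequently $\Psi(\phi)$-invariance is equivalent to commuting with $T^n\lambda$ for all $\lambda\in\Lambda$, i.e.\ to $\Lambda$-invariance. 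With this in hand, Theorem~\ref{mainthm:A} supplies $K_Y,H_Y,g_Y$, and the full/geodesic correspondences.

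\emph{(i) Lifting geodesics.} The key lemma is path lifting: every smooth $c\colon\R\supseteq I\to Y$ lifts to a smooth $\tilde c$ through any prescribed point of the fibre over $c(t_0)$, uniquely. For local existence I use that $\phi$ is a subduction, so $c$ lifts locally around each time. Where two local lifts overlap they differ by a locally constant, hence constant, element of $\Lambda$ (again by smoothness of $\delta$ into a discrete group). After translating, the local lifts patch on the connected interval $I$, and the same argument gives uniqueness. Given a geodesic $c$ of $K_Y$, its lift $\tilde c$ is a geodesic of $K$ by the last clause of Theorem~\ref{mainthm:A}. Two lifts differ by a single $\lambda$, which gives the bijection between geodesics of $Y$ and $\Lambda$-orbits of geodesics of $X$.

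\emph{(ii) Curve objects.} Being a curve object means that initial value problems for vector fields have unique solutions: for a vector field $v$ on $Y$, the map $\R\to$ flows should be representable with the required universal property. I would lift $v$ to a $\Lambda$-invariant vector field on $X$ using the pullback square of Proposition~\ref{prop:etale}, namely $TX\cong X\times_Y TY$. I then solve the problem in $X$ and push the solution down by $\phi$.
\begin{itemize}
\item \emph{Uniqueness.} Solutions on $Y$ lift by path lifting, and the lifts are solutions of the lifted field, so uniqueness in $X$ gives uniqueness in $Y$.
\item \emph{Existence and smooth dependence.} I would work plot-wise on $Y$: lift each plot locally to $X$, use the solution in $X$, and push it down. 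Invariance of the lifted field and uniqueness make the results independent of the chosen lift.
\end{itemize}
The main obstacle is matching the precise Cockett--Cruttwell--Lemay definition, i.e.\ the universal property of the flow as a morphism with parameters, so that it descends along the subduction $\phi\times\id$. I would use Proposition~\ref{prop:descent} on the parameter spaces, with invariance coming from uniqueness.

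\emph{(iii) Complete geodesic sprays.} The geodesic spray $S_Y$ of $(K_Y,H_Y)$ is related to $S$ by $T^2\phi\circ S=S_Y\circ T\phi$, and $S$ is $\Lambda$-invariant. Completeness of $S$ gives a flow $\Phi\colon\R\times TX\to TX$ that is $\Lambda$-equivariant by uniqueness. The map $T\phi$ is again the quotient by a principal $\Lambda$-action, via $TX\cong X\times_Y TY$. So $\Phi$ descends by Proposition~\ref{prop:descent} to $\Phi_Y$ with $T\phi\circ\Phi=\Phi_Y\circ(\id\times T\phi)$, and $\Phi_Y$ is the flow of $S_Y$. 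For existence of a geodesic with initial velocity $w\in TY$, project $t\mapsto\Phi_Y(t,w)$ to $Y$. For uniqueness, lift any geodesic with velocity $w$ via (i) to a geodesic with velocity $\tilde w$ over $w$; completeness and uniqueness in $X\in\Elastc$ identify this lift with the flow line of $\tilde w$.
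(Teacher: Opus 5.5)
Your treatment of the Q-chart claim, of (i) and of (iii) is essentially the paper's: the division map, patching local lifts along a connected interval, Theorem~\ref{mainthm:A}(5), and equivariance of the flow by uniqueness followed by descent along the subduction $\id_\R\times T\phi$. For uniqueness of the geodesic with initial velocity $w$ you lift both geodesics through a common point and use $X\in\Elastc$. This is a valid alternative to the paper's appeal to $Y\in\Elastc$.

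The gap is in (ii), where you aim at the wrong statement. Membership in $\Elastc$ is purely a uniqueness condition: for every $n\ge0$, every parametrized dynamical system $(T^nY,V,s)$, $s\colon Z\to T^nY$, has \emph{at most one} solution. There is no existence, smooth dependence or universal property to establish, and nothing to descend along $\phi\times\id$. Your existence bullet could not be carried out in any case, because $X\in\Elastc$ gives you no solutions upstairs to push down. Already $\R\in\Elastc$, while for $c\neq0$ the spray $S_c$ of Lemma~\ref{lem:line}(iii) has no integral curve through $(0,v)$, $v\neq0$, defined on all of $\R$.

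Your uniqueness bullet has the right idea, but as written it covers only vector fields on $Y$ and single integral curves. Two ingredients are missing.
\begin{enumerate}[label=\textup{(\alph*)},leftmargin=*]
\item Reduce to the case where $Z$ is a point. For $z\in Z$ the slice $\Phi(-,z)$ is an integral curve of $V$ with initial value $s(z)$, because $T(\iota_z)\circ\partial=(\partial\times0_Z)\circ\iota_z$ for $\iota_z(t)=(t,z)$. So solutions are compared slice by slice, and you never lift $s$ or $\Phi$ over $Z$; a global lift of $s$ along $T^n\phi$ need not exist.
\item Work on $T^nY$ for every $n$, not only on $Y$.
  \begin{enumerate}[label=\textup{(\roman*)},leftmargin=*]
  \item Lift $V$ to the unique $\tilde V$ on $T^nX$ with $T^{n+1}\phi\circ\tilde V=V\circ T^n\phi$. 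This uses Proposition~\ref{prop:descent}(ii) with $F=T^n$, $F'=T^{n+1}$, and Corollary~\ref{cor:detect} to see that $\tilde V$ is a section.
  \item Lift integral curves along $T^n\phi$. This requires knowing that $T^n\phi$ is the quotient map of the principal action $\eta\mapsto T^n\eta$; alternatively, lift the base path to $X$ and apply $(\Pi^{T^n}_\phi)^{-1}$.
  \item Check with Corollary~\ref{cor:detect} at level $T^{n+1}$ that the lifts are integral curves of $\tilde V$.
  \end{enumerate}
\end{enumerate}
Two lifts through the same $\xi\in(T^n\phi)^{-1}(s(z))$ then coincide by the case of $X\in\Elastc$ where $Z$ is a point. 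Applying $T^n\phi$ gives $\Phi(-,z)=\Phi'(-,z)$. This is the paper's argument via Lemma~\ref{lem:lift-vf}.
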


The irrational tori are the simplest spaces to which Theorem~\ref{mainthm:B} applies, and they are highly degenerate from the topological point of view: their D-topology is indiscrete and their only smooth functions are the constants.
Nevertheless they carry a geodesically complete Riemannian geometry in the sense of \cite{Shio}, which we determine explicitly.
Write $q\colon\R\to\Tor_\alpha$ for the quotient map and denote by $\bar\partial$ and $\vartheta$ the descended unit vector field and $1$-form, respectively, so that $Tq\circ\partial=\bar\partial\circ q$ and $q^*\vartheta=dt$.

\begin{mainthm}[Theorem~\ref{thm:irr}, Proposition~\ref{prop:d-zero}]\label{mainthm:C}
Let $\alpha\in\R\setminus\Q$.
\begin{enumerate}[label=\textup{(\roman*)},leftmargin=*]
\item $C^\infty(\Tor_\alpha,\R)=\R$, $\VF(\Tor_\alpha)=\R\,\bar\partial$ and $\Omega^1(\Tor_\alpha)=\R\,\vartheta$.
\item The Riemannian metrics on $\Tor_\alpha$ are exactly $\rho\,\vartheta^2$ with $\rho\in\R_{>0}$ a constant.
\item The vertical connections on $\Tor_\alpha$ form a one-parameter family; all of them are torsion-free, effective and flat.
\item For every constant $\rho>0$, the metric $\rho\,\vartheta^2$ has a unique Levi-Civita connection; it is independent of $\rho$, namely the connection $K_0$ induced by the trivial connection on $\R$.
  The geodesics of $K_0$ are the paths $t\mapsto[x+tv]$ $(x,v\in\R)$, and its geodesic flow is complete.
\item The pseudo-distance induced by $\rho\,\vartheta^2$ vanishes identically; nevertheless, for $[x]\neq[y]$ no path from $[x]$ to $[y]$ realizes the infimum defining it, that is, every such path has positive length.
\end{enumerate}
\end{mainthm}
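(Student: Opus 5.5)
The plan is to apply Theorem~\ref{mainthm:A} and Theorem~\ref{mainthm:B} to $q\colon\R\to\Tor_\alpha$, the quotient by the principal action of $\Lambda=\Z+\alpha\Z$. This $\Lambda$ is diffeologically discrete as a subgroup of $\R$, and the division map is locally constant, hence smooth. Every structure on $\Tor_\alpha$ then corresponds to a $\Lambda$-invariant structure on $\R$. Since $T^n\R$ and $T_k\R$ are products of copies of $\R$, a $\Lambda$-invariant structure is a smooth function of the base coordinate $x$ (times fibre data) that is invariant under $x\mapsto x+\lambda$ for all $\lambda\in\Lambda$. The key lemma is therefore: \emph{a smooth function $f\colon\R\to V$ ($V$ finite-dimensional) invariant under translation by the dense subgroup $\Lambda$ is constant}. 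This holds by continuity and density, since $f$ is constant on the dense set $x+\Lambda$. Part (i) follows immediately. A smooth function on $\Tor_\alpha$ pulls back to a $\Lambda$-invariant one on $\R$. A vector field $X$ on $\Tor_\alpha$ lifts via the pullback square $T\R\cong\R\times_{\Tor_\alpha}T\Tor_\alpha$ of Proposition~\ref{prop:etale} to a $\Lambda$-invariant field $f(x)\partial$, so $f$ is constant. One-forms are handled dually through $q^*$ and Proposition~\ref{prop:descent}.

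For (ii), a Riemannian metric on $\R$ is $\rho(x)\,dt^2$ with $\rho>0$ smooth. $\Lambda$-invariance forces $\rho$ to be constant, and Theorem~\ref{mainthm:A} gives the bijection with metrics on $\Tor_\alpha$, so these are exactly the $\rho\,\vartheta^2$. For (iii), I would recall from \cite{Shio} the form of a vertical connection on $\R$. It is a map $K\colon T^2\R\to T\R$ that is linear in the appropriate fibre directions, and in coordinates $(x,v,w,a)$ it reads
\[
K(x,v,w,a)=(x,\,a+\Gamma(x)vw)
\]
for a smooth Christoffel function $\Gamma$; equivalently it is determined by one such function. Invariance forces $\Gamma=c$ to be constant, so the vertical connections form a one-parameter family $K_c$, $c\in\R$. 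On $\R$ each $K_c$ is torsion-free, because the expression is symmetric in $v,w$. It is flat, because curvature in dimension one vanishes by antisymmetry. It is effective, because $K_c$ restricted to the vertical bundle is the identity shift. These properties transfer to $\Tor_\alpha$ by Theorem~\ref{mainthm:A}.

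For (iv), metric compatibility of $K_c$ with $\rho\,dt^2$ for constant $\rho$ forces $c=0$, since $\partial_x\rho=2\rho\Gamma$. Hence $K_0$ is the unique Levi-Civita connection on $\R$, and therefore on $\Tor_\alpha$ by Theorem~\ref{mainthm:A}. The geodesics of $K_0$ on $\R$ are the affine lines $x+tv$. By Theorem~\ref{mainthm:B}(i) the geodesics on $\Tor_\alpha$ are exactly the paths $[x+tv]$. Completeness of the flow follows from Theorem~\ref{mainthm:B}(iii), because $\R\in\Elastc$ and the affine-line flow is complete. For (v), every smooth path in $\Tor_\alpha$ lifts to a smooth path in $\R$; I would use the global lifting from Theorem~\ref{mainthm:B}, or an argument that plots into $\R/\Lambda$ locally lift. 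Its length equals $\sqrt{\rho}$ times the length of the lift, because each $T_xq$ is an isometry. A path from $[x]$ to $[y]$ therefore has length at least $\sqrt\rho\,\inf_{\lambda\in\Lambda}|y-x+\lambda|$. This infimum is $0$ by density, and it is approached by straight segments from $x$ to $y+\lambda$, which gives vanishing pseudo-distance. Every individual path has positive length, however. When $[x]\neq[y]$ its lift joins distinct points $x$ and $y+\lambda$, and the length of a path on $\R$ is at least the distance between its endpoints.

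The main obstacle I expect is the coordinate description in (iii). I must verify, from the definition of vertical connection in \cite{Shio} (with the linearity conditions relative to both the $T$- and $\ell$-structures), that on $\R$ these are precisely the maps with a single Christoffel function $\Gamma$. The path lifting in (v) for arbitrary smooth paths also needs care, but it reduces to Theorem~\ref{mainthm:B}'s lifting argument applied to paths.
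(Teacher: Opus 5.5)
Your proposal is correct and follows the paper's proof essentially step by step: you pass to $\Lambda_\alpha$-invariant structures on $\R$, use density to force constant coefficients, use the Euclidean form $K_c$ of vertical connections and the coordinate Levi-Civita condition $\rho'=2c\rho$, and use path lifting for geodesics and lengths and descent of the flow for completeness. The step you flag is the paper's Lemma~\ref{lem:euclid-vconn}: axioms (e) and (f) make the fibre component homogeneous of degree one in $(v_1,v_{01})$ and in $(v_0,v_{01})$, smoothness upgrades homogeneity to linearity via $f(w)=\frac{d}{dt}\big|_{t=0}f(tw)$, and (a) fixes the coefficient of $v_{01}$ to $1$; likewise, your antisymmetry argument for flatness tacitly needs $C_K$ to be tensorial in $(v_0,v_1,v_2)$, which the paper sidesteps by computing $K_c\,T(K_c)$ in coordinates and checking that it is invariant under $\tau_{T\R}$.
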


Thus the Riemannian geometry of \cite{Shio} sees on $\Tor_\alpha$ a one-parameter family of metrics, a unique Levi-Civita connection and a complete geodesic flow, while the D-topology of $\Tor_\alpha$ is indiscrete and the induced pseudo-distance vanishes identically.
The latter is consistent with the distance theorem of \cite{KSS}, whose separation hypothesis fails here (Remark~\ref{rem:KSS-distance}).

\subsection*{Organization}
Section~\ref{sec:prelim} recalls the tangent structure on $\Elast$, the Riemannian notions of \cite{Shio} and Miyamoto's Q-charts, and proves the two basic facts mentioned above.
Section~\ref{sec:transport} proves Theorem~\ref{mainthm:A}.
Section~\ref{sec:quotient-group} treats principal actions of diffeologically discrete groups and proves Theorem~\ref{mainthm:B}.
Section~\ref{sec:irr} is devoted to irrational tori.

\subsection*{Conventions}
We follow the conventions of \cite{Shio}.
$\Dcat$ is the category of diffeological spaces, $\Elast\subseteq\Dcat$ the full subcategory of elastic spaces, and $T$ Blohmann's tangent functor.
Composition is written in the applicative order, and in long chains we write $gf$ for $g\circ f$.
Maps are smooth unless stated otherwise.
A \emph{path} is a smooth map $\R\to M$.

\subsection*{Acknowledgements}
I would like to express my sincere gratitude to my supervisor, Keiichi Sakai, for his valuable guidance and continuous support since my undergraduate years.
I am also grateful to my sub-advisor, Katsuhiko Kuribayashi, for many insightful discussions and much helpful advice, and to David Miyamoto for helpful correspondence, generous encouragement, and permission to build on the material of \cite{Miy}.

This work was supported by JST SPRING, Grant Number JPMJSP2144, at Shinshu University.

\subsection*{Declaration of generative AI in the writing process}
During the preparation of this manuscript, the author used ChatGPT (OpenAI) and Claude (Anthropic) for language editing, for checking proofs, and for discussing the organization of the material.
All mathematical results, proofs, and scientific conclusions were verified by the author, who takes full responsibility for the content of this publication.

% =====================================================================
\section{Preliminaries}\label{sec:prelim}
% =====================================================================

Sections~\ref{subsec:tangent}--\ref{subsec:qchart} collect the background from \cite{Blo,Shio,Miy}, together with a few elementary consequences and notational conventions needed below.
The new ingredients of this section are Propositions~\ref{prop:etale} and~\ref{prop:descent}, which will be used throughout the rest of the paper.

\subsection{The tangent structure on elastic spaces}\label{subsec:tangent}

We use the tangent structure on $\Elast$ of \cite[Thm.~4.2]{Blo}, as recalled in \cite[\S2]{Shio}: the projection $\pi$, zero section $0$, fibrewise addition $+$, scalar multiplication $\kappa\colon\R\times T\to T$, vertical lift $\lambda\colon T\to T^2$ and canonical flip $\tau\colon T^2\to T^2$.
For $k\ge1$ we write $T_kM=TM\times_M\cdots\times_M TM$ for the $k$-fold fibre product over $\pi_M$, with projections $\pr_i$, and $T^n$ for the $n$-fold iterate of $T$.
Let
\[
  \Fam:=\{T^n\mid n\ge0\}\cup\{T_k\mid k\ge1\},\qquad T^0:=\id .
\]
For $F\in\Fam$ we denote by $\pi^F_M\colon FM\to M$ the projection to the base,
\[
  \pi^{T^0}_M=\id_M,\qquad
  \pi^{T^n}_M=\pi_M\circ\pi_{TM}\circ\cdots\circ\pi_{T^{n-1}M},\qquad
  \pi^{T_k}_M=\pi_M\circ\pr_1 ,
\]
and by $F_xM:=(\pi^F_M)^{-1}(x)$ its fibre over $x\in M$.

We shall use the following properties of $T$ \cite[Prop.~2.5]{Shio}: $T$ preserves finite products and subductions, and $T$ is \emph{local}: for a D-open subset $U\subseteq M$, $T$ of the inclusion is an induction with D-open image $\pi_M^{-1}(U)$ (a tangent vector $[P,u,v]$ with $P(u)\in U$ equals $[P|_{P^{-1}(U)},u,v]$).
Consequently, for $F\in\Fam$ we identify $FU$ with the D-open subset $(\pi^F_M)^{-1}(U)$ of $FM$.

\begin{defn}\label{def:over}
Let $p\colon E\to M$ and $p'\colon E'\to M$ be maps.
A map $f\colon E\to E'$ is a \emph{map over $M$} if $p'\circ f=p$, that is, if $f$ is a morphism in the slice category $\Dcat/M$.
In particular, for $F,F'\in\Fam$ a map $f\colon FM\to F'M$ is over $M$ (with respect to the projections $\pi^F_M$, $\pi^{F'}_M$) if $\pi^{F'}_M\circ f=\pi^F_M$, i.e.\ if $(f,\id_M)$ is a bundle morphism in the sense of \cite[Def.~A.2]{Shio}.
\end{defn}

Spaces such as $\R\times FM$ and $T(T_2M)$ are regarded as spaces over $M$ via $\pi^F_M\circ\pr_2$ and $\pi^{T_2}_M\circ\pi_{T_2M}$.
All structure maps (for instance $\kappa_M\colon\R\times TM\to TM$) are maps over $M$; composites, fibre products and tangent maps of maps over $M$ are again maps over $M$ (for the last, if $f\colon T^nM\to T^mM$ is over $M$, then $\pi^{T^{m+1}}_M\,T(f)=\pi^{T^m}_M\,\pi_{T^mM}\,T(f)=\pi^{T^m}_M\,f\,\pi_{T^nM}=\pi^{T^{n+1}}_M$).
We use this without comment.
A map $f$ over $M$ satisfies $f(FU)\subseteq F'U$ for every D-open $U\subseteq M$, so it restricts to a map $FU\to F'U$ over $U$.

Since $T$ preserves the canonical pullbacks defining the fibre powers $T_k$ (an axiom of tangent categories), the natural map
\begin{equation}\label{eq:chi}
  \chi_M:=\bigl\langle T(\pr_1),T(\pr_2)\bigr\rangle\colon
  T(T_2M)\xrightarrow{\ \cong\ }T^2M\times_{T(\pi_M)}T^2M
\end{equation}
is an isomorphism, natural in $M$.

On $T_2M$ we have the two scalar multiplications
\[
  \kappa^{(1)}(r,(v_0,v_1))=(\kappa_M(r,v_0),v_1),\qquad
  \kappa^{(2)}(r,(v_0,v_1))=(v_0,\kappa_M(r,v_1)).
\]
On $T^2M$, besides $\kappa_{TM}$ (for the bundle $\pi_{TM}$), there is the scalar multiplication for the bundle $T(\pi_M)$, the partial tangent
\[
  T_{(2)}\kappa_M:=T(\kappa_M)\circ(0_\R\times\id_{T^2M})\colon
  \R\times T^2M\to T^2M,
\]
where $T(\R\times TM)\cong T\R\times T^2M$; by \cite[Def.~A.8]{Shio} it satisfies $T_{(2)}\kappa_M=\tau_M\circ\kappa_{TM}\circ(\id_\R\times\tau_M)$.

\begin{rem}[Coordinates]\label{rem:coordinates}
For $M=\R^n$ write points of $T^2M$ as $(u,v_0,v_1,v_{01})$, so that
\[
  \pi_{TM}(u,v_0,v_1,v_{01})=(u,v_0),\qquad T(\pi_M)(u,v_0,v_1,v_{01})=(u,v_1).
\]
Then
\[
  \lambda_M(u,v)=(u,0,0,v),\qquad
  \tau_M(u,v_0,v_1,v_{01})=(u,v_1,v_0,v_{01}),
\]
\[
  \kappa_{TM}\bigl(t,(u,v_0,v_1,v_{01})\bigr)=(u,v_0,tv_1,tv_{01}),\qquad
  T_{(2)}\kappa_M\bigl(t,(u,v_0,v_1,v_{01})\bigr)=(u,tv_0,v_1,tv_{01}).
\]
Note that $T_{(2)}\kappa_M$ scales the base direction $v_0$ of the bundle $\pi_{TM}$, not $v_1$.
\end{rem}

For every map $f\colon M\to N$ of elastic spaces, naturality of the structure maps gives
\begin{align}
  \pi_N\circ Tf&=f\circ\pi_M, \label{eq:nat-pi}\\
  0_N\circ f&=Tf\circ 0_M, \label{eq:nat-zero}\\
  \kappa_N\circ(\id_\R\times Tf)&=Tf\circ\kappa_M, \label{eq:nat-kappa}\\
  +_N\circ(Tf\times_f Tf)&=Tf\circ +_M, \label{eq:nat-plus}\\
  \lambda_N\circ Tf&=T^2f\circ\lambda_M, \label{eq:nat-lambda}\\
  \tau_N\circ T^2f&=T^2f\circ\tau_M. \label{eq:nat-tau}
\end{align}
We shall apply these to $f=\phi$ and to $f=T\phi$; for instance $\lambda_{TY}\circ T^2\phi=T^3\phi\circ\lambda_{TX}$ and $\tau_{TY}\circ T^3\phi=T^3\phi\circ\tau_{TX}$.
In particular each $T_xf\colon T_xM\to T_{f(x)}N$ is $\R$-linear.
Moreover $T_kf\colon T_kM\to T_kN$ acts componentwise, so that it commutes with the projections $\pr_i$ and with $\kappa^{(i)}$, and its restriction to $(T_k)_xM$ is $(T_xf)^{\times k}$.

\begin{lem}\label{lem:nat-T2kappa}
For every $f\colon M\to N$, $T_{(2)}\kappa_N\circ(\id_\R\times T^2f)=T^2f\circ T_{(2)}\kappa_M$.
\end{lem}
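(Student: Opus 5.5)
We prove the identity by unwinding the definition $T_{(2)}\kappa_M=T(\kappa_M)\circ(0_\R\times\id_{T^2M})$ and using functoriality of $T$ together with naturality of $\kappa$ and of the zero section. This avoids the alternative description $T_{(2)}\kappa_M=\tau_M\circ\kappa_{TM}\circ(\id_\R\times\tau_M)$. That description would also work: \eqref{eq:nat-tau} for $\tau$, \eqref{eq:nat-kappa} applied to $Tf$ for $\kappa_{TM}$, and \eqref{eq:nat-tau} again would finish it in one line. I would in fact present it that way as the shortest route and keep the direct argument as a check.

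The direct route runs in three steps.

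\textbf{Step 1.} Apply $T$ to the naturality square \eqref{eq:nat-kappa}, $\kappa_N\circ(\id_\R\times Tf)=Tf\circ\kappa_M$. Functoriality gives
\[
T(\kappa_N)\circ T(\id_\R\times Tf)=T^2f\circ T(\kappa_M).
\]

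\textbf{Step 2.} Use that $T$ preserves finite products. Under the identifications $T(\R\times TM)\cong T\R\times T^2M$ and $T(\R\times TN)\cong T\R\times T^2N$, the map $T(\id_\R\times Tf)$ corresponds to $\id_{T\R}\times T^2f$. This holds because the comparison isomorphism $\langle T\pr_1,T\pr_2\rangle$ is natural in both factors.

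\textbf{Step 3.} Precompose with $0_\R\times\id_{T^2M}$ and observe that
\[
(\id_{T\R}\times T^2f)\circ(0_\R\times\id_{T^2M})=(0_\R\times\id_{T^2N})\circ(\id_\R\times T^2f).
\]
Combining this with Step 1 yields
\[
T_{(2)}\kappa_N\circ(\id_\R\times T^2f)=T^2f\circ T_{(2)}\kappa_M,
\]
which is the claim.

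The only point needing care is the bookkeeping in Step 2, namely that the product identification used in the definition of $T_{(2)}\kappa$ is compatible with $T$ of a product map. This is naturality of the product-preservation isomorphism, which is part of the statement that $T$ preserves finite products.
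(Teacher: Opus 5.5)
Your proposal is correct. The one-line argument you say you would actually present is exactly the paper's proof: conjugate by $\tau$ using \eqref{eq:nat-tau}, then apply \eqref{eq:nat-kappa} to $Tf$.

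Your direct route through the definition $T_{(2)}\kappa_M=T(\kappa_M)\circ(0_\R\times\id_{T^2M})$ is also valid, and it is a genuinely different argument. It needs only three things:
\begin{itemize}
\item functoriality of $T$;
\item naturality \eqref{eq:nat-kappa} for $f$ itself, rather than for $Tf$;
\item naturality of the product comparison $\langle T\pr_1,T\pr_2\rangle$.
\end{itemize}
So it does not depend on the conjugation formula $T_{(2)}\kappa_M=\tau_M\circ\kappa_{TM}\circ(\id_\R\times\tau_M)$ imported from \cite{Shio}. The same computation shows more generally that $T(\alpha_M)\circ(0_\R\times\id)$ is natural in $M$ for any natural family $\alpha_M\colon\R\times TM\to TM$. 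The paper's route is shorter only because that conjugation formula is already available.

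One small correction: Step~3 does not use naturality of the zero section. The identity $(\id_{T\R}\times T^2f)\circ(0_\R\times\id_{T^2M})=(0_\R\times\id_{T^2N})\circ(\id_\R\times T^2f)$ is just bifunctoriality of $\times$, since $f$ does not act on the $\R$ factor.
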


\begin{proof}
By $T_{(2)}\kappa=\tau\circ\kappa_T\circ(\id_\R\times\tau)$, \eqref{eq:nat-tau} and \eqref{eq:nat-kappa} applied to $Tf$,
\begin{align*}
  \tau_N\,\kappa_{TN}\,(\id_\R\times\tau_N)(\id_\R\times T^2f)
  &=\tau_N\,\kappa_{TN}\,(\id_\R\times T^2f)(\id_\R\times\tau_M)\\
  &=\tau_N\,T^2f\,\kappa_{TM}\,(\id_\R\times\tau_M)
  =T^2f\,\tau_M\,\kappa_{TM}\,(\id_\R\times\tau_M). \qedhere
\end{align*}
\end{proof}

\subsection{Riemannian structures}\label{subsec:riem}

We recall the notions of \cite{Shio}; $M$ is an elastic space.

\begin{defn}[{\cite[Defs.~3.1, 3.8, 3.18]{Shio}}]\label{def:vconn}
A \emph{vertical connection} on $M$ is a map $K\colon T^2M\to TM$ such that
\begin{enumerate}[label=(\alph*),leftmargin=*]
\item $K\circ\lambda_M=\id_{TM}$;
\item $\pi_M\circ K=\pi_M\circ\pi_{TM}$;
\item $\lambda_M\circ K=T(K)\circ\lambda_{TM}$;
\item $\lambda_M\circ K=T(K)\circ\tau_{TM}\circ T(\lambda_M)$;
\item $K\circ\kappa_{TM}=\kappa_M\circ(\id_\R\times K)$;
\item $K\circ T_{(2)}\kappa_M=\kappa_M\circ(\id_\R\times K)$.
\end{enumerate}
It is \emph{torsion-free} if $K\circ\tau_M=K$, and \emph{effective} if
\[
  \Theta^K_M:=\bigl\langle\pi_{TM},\,T(\pi_M),\,K\bigr\rangle\colon
  T^2M\longrightarrow T_3M
\]
is a diffeomorphism.
The \emph{curvature morphism} of $K$ is $C_K:=K\circ T(K)\circ\tau_{TM}-K\circ T(K)\colon T^3M\to TM$, the difference being taken in the fibres of $\pi_M$ \cite[Def.~5.1]{Shio}; $K$ is \emph{flat} if $C_K=0_M\circ\pi^{T^3}_M$.
\end{defn}

By axiom (b), a vertical connection is a map over $M$ in the sense of Definition~\ref{def:over}.

\begin{defn}[{\cite[Defs.~3.12, 3.15]{Shio}}]\label{def:hconn}
A \emph{horizontal connection} on $M$ is a map $H\colon T_2M\to T^2M$ such that
\begin{enumerate}[label=(\alph*),leftmargin=*]
\item $T(\pi_M)\circ H=\pr_1$;
\item $\pi_{TM}\circ H=\pr_2$;
\item $\lambda_{TM}\circ H=T(H)\circ(\lambda_M\times 0_{TM})$;
\item $\tau_{TM}\circ T(\lambda_M)\circ H=T(H)\circ(0_{TM}\times\lambda_M)$;
\item $H\circ\kappa^{(1)}=\kappa_{TM}\circ(\id_\R\times H)$;
\item $H\circ\kappa^{(2)}=T_{(2)}\kappa_M\circ(\id_\R\times H)$,
\end{enumerate}
where in (c), (d) the pairs take values in $T^2M\times_{T(\pi_M)}T^2M\cong T(T_2M)$ via \eqref{eq:chi}.
A pair $(K,H)$ is a \emph{full connection} if
\[
  \text{(F1)}\ \ K\circ H=0_M\circ\pi_M\circ\pr_1,\qquad
  \text{(F2)}\ \ \mu_M\circ\langle K,\pi_{TM}\rangle+H\circ U_M=\id_{T^2M},
\]
where $U_M:=\langle T(\pi_M),\pi_{TM}\rangle\colon T^2M\to T_2M$,
\[
  \mu_M:=T(+_M)\circ\bigl((\lambda_M\circ\pr_1)\times(0_{TM}\circ\pr_2)\bigr)
  \colon T_2M\to T^2M ,
\]
and the sum in (F2) is taken in the bundle $\pi_{TM}\colon T^2M\to TM$.
\end{defn}

In coordinates, $\mu_M((u,v_0),(u,v_1))=(u,v_1,0,v_0)$.
By \cite[Prop.~3.19, Rem.~3.20]{Shio}, every effective vertical connection $K$ has a unique partner $H$ with $(K,H)$ full, namely $H=(\Theta^K_M)^{-1}\circ\langle\pr_2,\pr_1, 0_M\pi_M\pr_1\rangle$.

\begin{defn}[{\cite[Defs.~4.1, 4.11]{Shio}}]\label{def:metric}
A \emph{Riemannian metric} on $M$ is a map $g\colon T_2M\to\R$ whose restriction to $T_xM\times T_xM$ is a positive definite symmetric bilinear form for every $x\in M$.
A vertical connection $K$ is a \emph{Levi-Civita connection} of $g$ if it is torsion-free and
\begin{equation}\label{eq:LC}
  d(g)=+_\R\circ\bigl\langle g\circ(K\times_\pi\pi_{TM}),\;
  g\circ(\pi_{TM}\times_\pi K)\bigr\rangle ,
\end{equation}
as maps $T(T_2M)\to\R$, where $d(g):=\pr_2\circ T(g)$, $T(T_2M)$ is identified with $T^2M\times_{T(\pi_M)}T^2M$ by \eqref{eq:chi}, and $(K\times_\pi\pi_{TM})(A,B)=(K(A),\pi_{TM}(B))$, $(\pi_{TM}\times_\pi K)(A,B)=(\pi_{TM}(A),K(B))$.
\end{defn}

Let $\partial$ be the unit vector field on $\R$ and, for a path $\gamma$, let $\dot\gamma:=T(\gamma)\circ\partial$.
A path $\gamma$ is a \emph{geodesic} of a vertical connection $K$ if $K\circ T(\dot\gamma)\circ\partial=0_M\circ\gamma$ \cite[Def.~6.9]{Shio}.

Recall from \cite[\S6.1]{Shio} (after \cite{CCL}) that a \emph{parametrized dynamical system} $(N,V,s)$ consists of a vector field $V$ on $N$ and a map $s\colon Z\to N$, and a \emph{solution} is a map $\Phi\colon\R\times Z\to N$ with $\Phi(0,-)=s$ and $T(\Phi)\circ(\partial\times0_Z)=V\circ\Phi$.
When $Z$ is a point, solutions are the \emph{integral curves} of $V$ through $s$.
A vector field $V$ on $N$ is \emph{complete} if $(N,V,\id_N)$ has a solution.
Following \cite[Def.~6.18]{Shio}, $\Elastc$ is the full subcategory of those $M\in\Elast$ such that for every $n\ge0$ every parametrized dynamical system on $T^nM$ has at most one solution.
Every finite-dimensional smooth manifold and every Banach manifold belongs to $\Elastc$ \cite[Rem.~6.20]{Shio}.

The \emph{geodesic spray} of a full connection $(K,H)$ is $S:=H\circ\langle\id_{TM},\id_{TM}\rangle\colon TM\to T^2M$; it is a vector field on $TM$ \cite[Def.~6.16]{Shio}.

\begin{conv}\label{conv:flow}
By the \emph{geodesic flow} of $(K,H)$ we mean a solution $\Phi\colon\R\times TM\to TM$ of the first-order system $(TM,S,\id_{TM})$; it exists if and only if $S$ is complete.
The geodesic flow of \cite[Def.~6.16]{Shio} is $\pi_M\circ\Phi$, which solves the corresponding second-order system by \cite[Prop.~6.14]{Shio}.
\end{conv}

The following lemma makes the relation between geodesics and the spray explicit; its part (iii) is the uniqueness statement of \cite[Thm.~6.21]{Shio}, which in fact does not need completeness.

\begin{lem}\label{lem:geod-spray}
Let $(K,H)$ be a full connection on $M$ with geodesic spray $S$.
\begin{enumerate}[label=\textup{(\roman*)},leftmargin=*]
\item A path $\gamma$ is a geodesic of $K$ if and only if $\dot\gamma$ is an integral curve of $S$.
\item If $c\colon\R\to TM$ is an integral curve of $S$, then $\gamma:=\pi_M\circ c$ is a geodesic and $\dot\gamma=c$.
\item If $M\in\Elastc$, a geodesic is determined by its initial velocity.
\end{enumerate}
\end{lem}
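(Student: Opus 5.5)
The proof reduces each part to the characterization of the spray by (F1) and (F2), plus the Cockett--Cruttwell--Lemay uniqueness result as stated in \cite[Thm.~6.21]{Shio}. The key identity is
\[
  U_M\circ S=\langle\id_{TM},\id_{TM}\rangle .
\]
It follows from axioms (a), (b) of Definition~\ref{def:hconn}: $T(\pi_M)\circ S=\pr_1\langle\id,\id\rangle=\id_{TM}$ and $\pi_{TM}\circ S=\id_{TM}$. By (F1) we also have $K\circ S=0_M\circ\pi_M$. So $S$ is a second-order vector field whose covariant derivative vanishes.

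For (ii), let $c$ be an integral curve of $S$, i.e.\ $T(c)\circ\partial=S\circ c$, and put $\gamma=\pi_M c$. Then
\[
  \dot\gamma=T(\pi_M)\,T(c)\,\partial=T(\pi_M)\,S\,c=c .
\]
Hence $K\,T(\dot\gamma)\,\partial=K\,S\,c=0_M\,\pi_M\,c=0_M\,\gamma$, so $\gamma$ is a geodesic. This is the easy direction and also gives one half of (i).

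For the converse in (i), let $\gamma$ be a geodesic and set $A:=T(\dot\gamma)\circ\partial\colon\R\to T^2M$. I will show $A=S\circ\dot\gamma$ using (F2), which gives
\[
  A=\mu_M\langle K A,\pi_{TM}A\rangle+H\,U_M A .
\]
First, $\pi_{TM}A=\dot\gamma$ because $\partial$ is a section of $\pi_\R$. Second, $T(\pi_M)A=T(\pi_M\dot\gamma)\,\partial=T(\gamma)\,\partial=\dot\gamma$, so $U_MA=\langle\dot\gamma,\dot\gamma\rangle$ and $H\,U_MA=S\dot\gamma$. Third, the geodesic equation gives $KA=0_M\gamma$. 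It remains to show that $\mu_M(0_x,v)$ is the zero of the bundle $\pi_{TM}$ over $v$, namely $0_{TM}(v)$. In coordinates $\mu_M((u,0),(u,v))=(u,v,0,0)$, which is exactly $0_{TM}(u,v)$. Intrinsically this should follow from $T(+_M)$ applied to $\lambda_M(0_x)=T(0_M)0_x$ and $0_{TM}(v)$, together with the additivity of $T(+)$ on zeros; I expect this to be routine tangent-category bookkeeping. With these three facts, $A=0_{TM}(\dot\gamma)+S\dot\gamma=S\dot\gamma$, so $\dot\gamma$ is an integral curve of $S$.

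For (iii), suppose geodesics $\gamma_1,\gamma_2$ satisfy $\dot\gamma_1(0)=\dot\gamma_2(0)=w$. By (i), both $\dot\gamma_i$ are solutions of the dynamical system $(TM,S,w)$ with $Z$ a point. Since $M\in\Elastc$, every parametrized dynamical system on $T^1M$ has at most one solution, so $\dot\gamma_1=\dot\gamma_2$. Applying $\pi_M$ gives $\gamma_1=\gamma_2$. No completeness of $S$ is used.

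The main obstacle is the zero computation $\mu_M(0_x,v)=0_{TM}(v)$ in (i), since it has to be done intrinsically rather than in coordinates. I will derive it from the naturality of $0$ and the additivity axioms for $T(+_M)$ relative to the bundle $\pi_{TM}$.
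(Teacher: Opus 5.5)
Your proposal is correct and follows the paper's proof. The paper also applies (F2) to $T(\dot\gamma)\circ\partial$ and kills the $\mu_M$-term using $\lambda_M\circ0_M=0_{TM}\circ0_M$ (your $\lambda_M(0_x)=T(0_M)(0_x)$ combined with naturality of $0$) together with $T(+_M)\circ0_{T_2M}=0_{TM}\circ+_M$. It uses (F1) for the other direction and the $\Elastc$ uniqueness with $n=1$, $Z$ a point, for (iii). The only difference is cosmetic: you prove (ii) directly and deduce the easy half of (i) from it, whereas the paper proves (i) first and then derives (ii) from it.
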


\begin{proof}
(i) Let $W:=T(\dot\gamma)\circ\partial\colon\R\to T^2M$.
Then $\pi_{TM}W=\dot\gamma$ and $T(\pi_M)W=T(\pi_M\dot\gamma)\partial=\dot\gamma$, so $U_MW=(\dot\gamma,\dot\gamma)$ and $HU_MW=S\dot\gamma$.
If $\gamma$ is a geodesic, then $KW=0_M\gamma$, and since $\lambda_M\circ0_M=0_{TM}\circ0_M$ and $T(+_M)\circ 0_{T_2M}=0_{TM}\circ+_M$ we get $\mu_M\langle K,\pi_{TM}\rangle W=0_{TM}\dot\gamma$, the zero of the fibre of $\pi_{TM}$ over $\dot\gamma$; hence (F2) gives $W=S\dot\gamma$.
Conversely, if $W=S\dot\gamma$ then $KW=KH\langle\dot\gamma,\dot\gamma\rangle=0_M\gamma$ by (F1).
(ii) Since $T(\pi_M)\circ S=\id_{TM}$ by (a) of Definition~\ref{def:hconn}, $\dot\gamma=T(\pi_M)T(c)\partial=T(\pi_M)Sc=c$; now apply (i).
(iii) If $\gamma,\gamma'$ are geodesics with $\dot\gamma(0)=\dot\gamma'(0)$, then $\dot\gamma,\dot\gamma'$ are integral curves of $S$ with the same initial value by (i), so they coincide because $M\in\Elastc$ (case $n=1$, $Z$ a point), and $\gamma=\pi_M\dot\gamma=\pi_M\dot\gamma'=\gamma'$.
\end{proof}

\subsection{Q-charts}\label{subsec:qchart}

\begin{defn}[{\cite[Def.~3.14]{Miy}}]\label{def:qchart}
A map $\phi\colon X\to Y$ of diffeological spaces is a \emph{Q-chart} if
\begin{enumerate}[label=(Q\arabic*),leftmargin=*]
\item $\phi$ is a subduction;
\item the diagonal is D-open in $X\times_\phi X=\{(x,x')\mid\phi(x)=\phi(x')\}$;
\item the second (equivalently, the first) projection $X\times_\phi X\to X$ is a local diffeomorphism.
\end{enumerate}
\end{defn}

The fibre product $X\times_\phi X$ is the kernel pair of $\phi$; with source $\pr_2$ and target $\pr_1$ it is the \emph{relation groupoid} $\mathcal{R}_\phi$ of \cite{Miy}, which is \'etale in the sense of \cite[Def.~3.7]{Miy} when $\phi$ is a Q-chart.

\begin{defn}[{\cite[Def.~3.18]{Miy}}]\label{def:transition}
A \emph{transition} of $X$ is a diffeomorphism $h\colon A\to B$ between D-open subsets $A=\dom h$ and $B$ of $X$.
We set
\[
  \Psi(\phi):=\{h \text{ transition of }X\mid \phi\circ h=\phi\text{ on }\dom h\}.
\]
\end{defn}

\begin{lem}[{\cite[Lem.~3.19]{Miy}}]\label{lem:miy-germ}
Let $\phi$ be a Q-chart and $(x,x')\in X\times_\phi X$.
There is $h\in\Psi(\phi)$ with $h(x')=x$, and the germ of such an $h$ at $x'$ is unique.
\end{lem}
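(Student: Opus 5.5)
Existence: by (Q3) the projection $\pr_2\colon X\times_\phi X\to X$ is a local diffeomorphism, so there is a D-open neighbourhood $W$ of $(x,x')$ in $X\times_\phi X$ and a D-open $A\ni x'$ in $X$ such that $\pr_2|_W\colon W\to A$ is a diffeomorphism. Let $\sigma\colon A\to W$ be its inverse; it is a smooth section of $\pr_2$ with $\sigma(x')=(x,x')$. Put $h_0:=\pr_1\circ\sigma\colon A\to X$. Then $\phi\circ h_0=\phi$ on $A$, because points of $X\times_\phi X$ satisfy $\phi(\pr_1)=\phi(\pr_2)$, and $h_0(x')=x$.

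The map $h_0$ need not yet be a transition: I must shrink $A$ so that $h_0$ becomes a diffeomorphism onto a D-open set. For this I will use the symmetry of the groupoid. The flip $s(a,b)=(b,a)$ is a diffeomorphism of $X\times_\phi X$ with $\pr_1=\pr_2\circ s$, so $\pr_1$ is also a local diffeomorphism (this is the ``equivalently'' in (Q3)). After shrinking $W$, we may assume $\pr_1|_W$ is a diffeomorphism onto a D-open $B\ni x$. Setting $A':=\pr_2(W)$, the map
\[
h:=\pr_1|_W\circ(\pr_2|_W)^{-1}\colon A'\to B
\]
is a composite of diffeomorphisms between D-open sets, hence a transition. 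Since it agrees with $h_0$ on $A'$, it lies in $\Psi(\phi)$ and satisfies $h(x')=x$. The shrinking works because restrictions of local diffeomorphisms to D-open subsets remain local diffeomorphisms onto D-open images, which follows from the definition of local diffeomorphism in diffeology.

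Uniqueness of germs is where (Q2) enters, and I expect it to be the main point. Suppose $h,k\in\Psi(\phi)$ both send $x'$ to $x$. On the D-open set $D:=\dom h\cap\dom k$, consider the smooth map
\[
\beta:=(h,k)\colon D\to X\times_\phi X,
\]
which is well defined since $\phi h=\phi=\phi k$. The map $\beta$ is smooth into the subspace diffeology of the fibre product. By (Q2) the diagonal $\Delta$ is D-open in $X\times_\phi X$, so $\beta^{-1}(\Delta)=\{y\in D\mid h(y)=k(y)\}$ is D-open in $D$, hence in $X$. It contains $x'$ because $h(x')=x=k(x')$. Therefore $h$ and $k$ agree on a D-open neighbourhood of $x'$, which is exactly equality of germs. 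The only subtlety is that preimages of D-open sets under smooth maps are D-open, which holds because the D-topology is functorial.
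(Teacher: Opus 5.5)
Your proof is correct. The paper gives no argument for this lemma; it quotes it from \cite[Lem.~3.19]{Miy}. So you have written a self-contained proof in place of a citation, not a variant of a proof in the text.

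Your existence step is the standard construction of a local bisection of the \'etale relation groupoid $\mathcal{R}_\phi$ through the arrow $(x,x')$. You take a local section of the source $\pr_2$ and make it a transition by shrinking to a D-open set on which the target $\pr_1$ is also a diffeomorphism onto a D-open set. The shrinking is legitimate for the reason you only half-state: the D-topology of a D-open subset is its subspace topology. Hence a diffeomorphism between D-open subsets sends D-open subsets to D-open subsets.

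Your uniqueness step pulls back the D-open diagonal along $(h,k)$. This is the same use of (Q2) the paper makes in the proof of Proposition~\ref{prop:etale}.

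Uniqueness also follows from (Q3) alone. The smooth sections $y\mapsto(h(y),y)$ and $y\mapsto(k(y),y)$ both pass through $(x,x')$. Near $x'$ they therefore land in a D-open set on which $\pr_2$ is injective, and so they coincide. The same injectivity trick, applied to the diagonal section $y\mapsto(y,y)$, shows that (Q3) already forces the diagonal to be D-open. Your appeal to (Q2) is therefore convenient rather than essential.
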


\begin{lem}[{\cite[Lem.~3.20, Prop.~3.21, Cors.~3.22, 3.23]{Miy}}]\label{lem:miy-toolkit}
Let $\phi\colon X\to Y$ be a Q-chart and $F\in\Fam$.
\begin{enumerate}[label=\textup{(\roman*)},leftmargin=*]
\item $F\phi$ is a Q-chart; in particular it is a subduction, hence surjective.
\item For every $x\in X$ the restriction $F_x\phi\colon F_xX\to F_{\phi(x)}Y$ is a bijection.
\item If $F\phi(\xi')=F\phi(\xi)$, there is $h\in\Psi(\phi)$ with $Fh(\xi')=\xi$.
\end{enumerate}
\end{lem}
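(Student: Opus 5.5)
The plan is to prove the three statements first for $F=T$ and then bootstrap to all of $\Fam$.
For $F=T^n$ we induct on $n$, using that $T^{n}\phi=T(T^{n-1}\phi)$ and that $T^{n-1}\phi$ is a Q-chart by induction.
For $F=T_k$ we use that $T_k\phi$ acts componentwise and that $(T_k)_xX=(T_xX)^{\times k}$.
So the core is the case $F=T$, and the argument runs (ii) $\Rightarrow$ (iii) $\Rightarrow$ (i), with (iii) feeding into the local-diffeomorphism axiom.

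\textbf{Step 1: fibrewise surjectivity in (ii).}
Let $w=[P,u,v]\in T_{\phi(x)}Y$.
Since $\phi$ is a subduction, after shrinking the domain of $P$ around $u$ there is a plot $Q$ of $X$ with $\phi\circ Q=P$.
Then $x':=Q(u)$ satisfies $\phi(x')=\phi(x)$.
By Lemma~\ref{lem:miy-germ} there is $h\in\Psi(\phi)$ with $h(x')=x$.
After shrinking once more so that $Q$ lands in $\dom h$, the vector $[h\circ Q,u,v]$ lies in $T_xX$ and maps to $w$, because $\phi\circ h=\phi$ on $\dom h$.
The same argument, run without $h$, shows that $T\phi$ is a subduction, since lifts of plots of $TY$ are built from local lifts of the underlying plots; here we use that $T$ preserves subductions.

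\textbf{Step 2: fibrewise injectivity in (ii), the expected main obstacle.}
Take $\xi,\xi'\in T_xX$ with $T\phi(\xi)=T\phi(\xi')$.
The plan is to compare them inside the relation groupoid.
By (Q2) the diagonal $\Delta\subseteq X\times_\phi X$ is D-open.
By (Q3) $\pr_1|_\Delta=\id$ is the relevant local inverse, so the locality of $T$ identifies $T\Delta$ with a D-open piece of $T(X\times_\phi X)$ on which $T\pr_1$ and $T\pr_2$ agree.
What remains is to produce a tangent vector $\eta\in T(X\times_\phi X)$ with $T\pr_1(\eta)=\xi$ and $T\pr_2(\eta)=\xi'$.
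I would attempt this using Blohmann's description of $T$ as a left Kan extension.
The equality $[P,u,v]=[P',u',v']$ in $TY$, for $P=\phi Q$ and $P'=\phi Q'$, is generated by germs of smooth maps of Euclidean domains intertwining the plots.
Each generating step should lift through $X\times_\phi X$ via local sections of $\pr_1$ supplied by (Q3).
Since the base point of $\eta$ then lies in $\Delta$, the locality of $T$ forces $\xi=\xi'$.
Making this chain-lifting argument precise is where I expect the real work to be; elasticity of $X$ may be needed to control the colimit.

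\textbf{Step 3: (iii).}
Suppose $T\phi(\xi')=T\phi(\xi)$ with base points $x'$ and $x$.
Lemma~\ref{lem:miy-germ} gives $h\in\Psi(\phi)$ with $h(x')=x$.
Then $Th(\xi')$ and $\xi$ both lie in $T_xX$ and have the same image under $T\phi$, since $T\phi\circ Th=T\phi$ on $T(\dom h)$.
So $Th(\xi')=\xi$ by (ii).

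\textbf{Step 4: (i).}
We check the axioms for $T\phi$ one at a time.
For (Q1), Step 1 shows $T\phi$ is a subduction.
For (Q2) and (Q3), use Step 3: near a point $(\xi,\xi')$ of $TX\times_{T\phi}TX$, the map $\zeta\mapsto(Th(\zeta),\zeta)$ on $T(\dom h)$ is a local inverse of $\pr_2$, which gives (Q3).
By (ii) and the locality of $T$, the diagonal of $TX\times_{T\phi}TX$ coincides near the diagonal with the D-open set $\pi^{-1}(\Delta)$, which gives (Q2).
Surjectivity of $T\phi$ is then immediate from (Q1).
Finally, iterate as in the first paragraph to pass to $T^n$ and $T_k$.
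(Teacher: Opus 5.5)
\textbf{The gap is Step 2.} Fibrewise injectivity of $T_x\phi$ is the real content of the lemma. The paper imports it from \cite[Lem.~3.20]{Miy}, and your Steps 3 and 4 are formal once it is known. Your Step 2, however, only states an intention.

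The intermediate goal you set is no easier than the claim itself. Producing $\eta\in T(X\times_\phi X)$ with $T\pr_1(\eta)=\xi$ and $T\pr_2(\eta)=\xi'$ amounts to $T$ partially preserving the kernel pair of $\phi$. That is not a general property of Blohmann's functor, and elasticity is not what makes it work.

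The missing idea is to lift the zigzag witnessing $T\phi(\xi)=T\phi(\xi')$ in the colimit $TY$ step by step, absorbing each mismatch into a transition.
\begin{enumerate}[label=\textup{(\alph*)},leftmargin=*]
\item The zigzag consists of representatives $(P_i,u_i,v_i)$, $0\le i\le n$, all with $P_i(u_i)=\phi(x)$. Consecutive ones are related by a smooth $f_i$ with $P_i=P_{i+1}\circ f_i$, $u_{i+1}=f_i(u_i)$ and $v_{i+1}=Df_i(u_i)v_i$, or with $i$ and $i+1$ exchanged.
\item Lift each $P_i$ near $u_i$ to a plot $Q_i$ of $X$, taking for $Q_0$ and $Q_n$ the given representatives of $\xi$ and $\xi'$.
\item By Lemma~\ref{lem:miy-germ}, pick $h_i\in\Psi(\phi)$ with $h_i(Q_{i+1}(u_{i+1}))=Q_i(u_i)$. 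The plot $w\mapsto(Q_i(w),h_iQ_{i+1}f_i(w))$ of $X\times_\phi X$ sends $u_i$ into the diagonal, which is D-open by (Q2). Hence $Q_i=h_i\circ Q_{i+1}\circ f_i$ near $u_i$, and so $[Q_i,u_i,v_i]=Th_i[Q_{i+1},u_{i+1},v_{i+1}]$. In the exchanged case you get the same relation with an inverse transition.
\item Composing, $\xi=Th(\xi')$ for some $h\in\Psi(\phi)$ with $h(x)=x$. By the uniqueness of germs in Lemma~\ref{lem:miy-germ}, $h=\id$ near $x$. So $Th=\id$ on $T_xX$ by locality, and $\xi=\xi'$.
\end{enumerate}
This is the same device as in the proof of Proposition~\ref{prop:etale}, and no elasticity is used.

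There are also two smaller problems in the bootstrapping.

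\emph{The case $F=T^n$.} Applying the case $F=T$ to the Q-chart $T^{n-1}\phi$ gives (iii) only with some transition in $\Psi(T^{n-1}\phi)$. The statement requires $T^nh$ with $h\in\Psi(\phi)$. The fix is to induct only on (i) and (ii), using $T^n_xX=\bigcup_{\zeta\in T^{n-1}_xX}\pi_{T^{n-1}X}^{-1}(\zeta)$. Then derive (iii) for every $F\in\Fam$ from (ii) by your Step 3 argument with $F$ in place of $T$. For $F=T_k$ this route is slightly more direct than the paper's proof of (iii), which picks one $h_i$ per component and uses germ uniqueness to see that they agree near $x'$.

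\emph{The case $F=T_k$.} Saying that $T_k\phi$ acts componentwise does not yield (i). Local lifts of the $k$ components of a plot of $T_kY$ have different base plots in $X$. These must first be aligned by transitions, using (Q2) and Lemma~\ref{lem:miy-germ} as in the proof of Proposition~\ref{prop:etale}. After that, (Q2) and (Q3) for $T_k\phi$ follow as in your Step 4.
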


For $F=T^n$, all three assertions are contained in \cite[Cor.~3.22]{Miy} (see also \cite[Lem.~3.20, Prop.~3.21]{Miy}).
For $F=T_k$, part (i) is \cite[Cor.~3.23]{Miy}, part (ii) follows from \cite[Lem.~3.20]{Miy} since $(T_k)_x\phi=(T_x\phi)^{\times k}$, and part (iii) is proved as follows.

\begin{proof}[Proof of \textup{(iii)} for $F=T_k$]
Let $\xi=(w_1,\dots,w_k)$, $\xi'=(w_1',\dots,w_k')$ with $T_k\phi(\xi)=T_k\phi(\xi')$, and let $x,x'$ be their base points.
By the case $F=T$ there are $h_i\in\Psi(\phi)$ with $Th_i(w_i')=w_i$, whence $h_i(x')=x$.
By Lemma~\ref{lem:miy-germ} all $h_i$ have the same germ at $x'$, so they agree on a D-open neighbourhood $U$ of $x'$, and by locality of $T$ the maps $Th_i$ agree on $TU\supseteq T_{x'}X$.
Hence $h:=h_1$ satisfies $T_kh(\xi')=\xi$.
\end{proof}

\subsection{Q-charts and tangent-level pullbacks}\label{subsec:etale}

The following observation is the basic tool of this paper.

\begin{prop}\label{prop:etale}
Let $\phi\colon X\to Y$ be a Q-chart with $X$ elastic, and let $F\in\Fam$.
Then $Y$ is elastic by \cite[Thm.~3.24]{Miy}, and
\[
  \Pi^F_\phi:=\bigl(\pi^F_X,\,F\phi\bigr)\colon FX\longrightarrow
  X\times_Y FY=\{(x,\eta)\mid\phi(x)=\pi^F_Y(\eta)\}
\]
is a diffeomorphism.
In other words, the square formed by $F\phi$, $\pi^F_X$, $\pi^F_Y$ and $\phi$ is a pullback in $\Dcat$.
\end{prop}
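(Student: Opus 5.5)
We must show that $\Pi^F_\phi$ is bijective, that it is smooth (clear, since both components are smooth and $\phi\circ\pi^F_X=\pi^F_Y\circ F\phi$ by naturality of the projections), and that its inverse is smooth. The whole argument rests on Lemma~\ref{lem:miy-toolkit}.

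\emph{Bijectivity.} Let $(x,\eta)\in X\times_Y FY$. Then $\eta\in F_{\phi(x)}Y$, and by Lemma~\ref{lem:miy-toolkit}(ii) there is exactly one $\xi\in F_xX$ with $F\phi(\xi)=\eta$. Hence $\Pi^F_\phi$ is a bijection.

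\emph{Smoothness of the inverse.} This is the main step. Let $P=(p,Q)\colon U\to X\times_Y FY$ be a plot, and fix $u_0\in U$. We must show that $(\Pi^F_\phi)^{-1}\circ P$ is smooth near $u_0$.
\begin{enumerate}[label=(\arabic*),leftmargin=*]
\item By Lemma~\ref{lem:miy-toolkit}(i), $F\phi$ is a subduction. Hence, after shrinking $U$ to a neighbourhood of $u_0$, we may write $Q=F\phi\circ R$ for some plot $R\colon U\to FX$.
\item Put $r:=\pi^F_X\circ R$. Then $\phi(r(u))=\pi^F_Y(Q(u))=\phi(p(u))$, so $(p,r)\colon U\to X\times_\phi X$ is a plot of the kernel pair.
\item By Lemma~\ref{lem:miy-germ} there is $h\in\Psi(\phi)$ with $h(r(u_0))=p(u_0)$. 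The map $\langle h\circ\pr_2,\pr_2\rangle$, defined on $X\times_\phi X$ restricted over $\dom h$, is a local section of $\pr_2$ through $(p(u_0),r(u_0))$.
\item Using (Q3) (the projection is a local diffeomorphism) together with (Q2), the two sections $(p,r)$ and $(h\circ r,r)$ of $\pr_2$ pass through the same point at $u_0$. By uniqueness of germs of sections of a local diffeomorphism, they agree near $u_0$. Concretely, $\pr_2$ is injective on a D-open neighbourhood of $(p(u_0),r(u_0))$, and both plots land in it for $u$ near $u_0$ by continuity of plots in the D-topology. Hence $p=h\circ r$ near $u_0$.
\item Then $Fh\circ R$ is a plot of $FX$ near $u_0$. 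Its base point is $h\circ r=p$, and $F\phi\circ Fh\circ R=F(\phi h)\circ R=F\phi\circ R=Q$. Here we use locality of $F$: since $\phi h=\phi|_{\dom h}$, we get $F(\phi h)=F\phi$ on $F(\dom h)$.
\item So $Fh\circ R$ is the value of $(\Pi^F_\phi)^{-1}\circ P$ near $u_0$, which is therefore smooth.
\end{enumerate}

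The delicate point is step (4), the identification $p=h\circ r$ locally. A more invariant alternative is to apply the étale property to the plot $(p,r)$: (Q3) gives a local smooth inverse $\sigma$ of $\pr_2$ near $(p(u_0),r(u_0))$, and $\sigma\circ r=(p,r)$ near $u_0$. One then realizes $\pr_1\circ\sigma$ as a transition in $\Psi(\phi)$, as in Miyamoto's proof of Lemma~\ref{lem:miy-germ}.

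For $F=T_k$, one can also run the same argument componentwise via Lemma~\ref{lem:miy-toolkit}(iii) with the common $h$. Alternatively, reduce to $F=T$, using that $T_kX=TX\times_X\cdots\times_X TX$ and that pullbacks paste.
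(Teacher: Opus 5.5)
Your proof is correct, and its core is the paper's argument for the case $F=T$. You lift the second component along the subduction $F\phi$, move the base point with a transition $h\in\Psi(\phi)$ from Lemma~\ref{lem:miy-germ}, check that the base points then agree near $u_0$, and use $Fh\circ R$ as the local lift.

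The difference lies in how the cases are organized:
\begin{itemize}
\item The paper runs this argument only for $F=T$. It obtains $T^n$ by induction, pasting the pullback squares given by the case $F=T$ for the Q-charts $T^{n-1}\phi$, and it treats $T_k$ componentwise.
\item You run the argument once for an arbitrary $F\in\Fam$. This needs only that $F\phi$ is a subduction (Lemma~\ref{lem:miy-toolkit}(i)) and that $F$ is local, together with (Q2)/(Q3) for $\phi$ itself. It therefore avoids the induction and never needs $T^{n-1}\phi$ to be a Q-chart. This is slightly more economical and uniform; the paper's route has the advantage of isolating the single case $F=T$ and deriving the rest formally from pasting pullbacks.
\end{itemize}

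On step (4): you deduce $p=h\circ r$ near $u_0$ from the injectivity of $\pr_2$ on a D-open neighbourhood of $(p(u_0),r(u_0))$, which comes from (Q3). The paper gets the same conclusion more directly. The plot $u\mapsto(p(u),h(r(u)))$ of $X\times_\phi X$ meets the diagonal at $u_0$, and the diagonal is D-open by (Q2), so the plot stays in the diagonal near $u_0$. Either argument works. Your concrete argument uses only (Q3), so the mention of (Q2) there is unnecessary.

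Two smaller points:
\begin{itemize}
\item The maps $(p,r)$ and $(h\circ r,r)$ are plots of $X\times_\phi X$ lying over $r$, not sections of $\pr_2$. Your concrete argument does not depend on that wording.
\item Your closing remarks on $T_k$ are superfluous, since your main argument already covers $F=T_k$.
\end{itemize}
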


\begin{proof}
The map $\Pi^F_\phi$ is well defined by naturality of the projections and bijective by Lemma~\ref{lem:miy-toolkit}(ii).
It remains to show that its inverse is smooth, and since smoothness is a local property of plots it suffices to lift plots locally.

\emph{The case $F=T$.} Let $(Q,R)\colon U\to X\times_Y TY$ be a plot and $u_0\in U$.
Since $T\phi$ is a subduction, after shrinking $U$ there is a plot $R'\colon U\to TX$ with $T\phi\circ R'=R$.
Then $\phi(\pi_XR'(u))=\pi_YR(u)= \phi(Q(u))$, so by Lemma~\ref{lem:miy-germ} there is $h\in\Psi(\phi)$ with $h(\pi_XR'(u_0))=Q(u_0)$.
After shrinking $U$ further, $R'(U)\subseteq T(\dom h)$ and $\tilde R:=Th\circ R'$ is a plot of $TX$ with $T\phi\circ\tilde R=T(\phi h)\circ R'=R$.
The plot $u\mapsto\bigl(Q(u),\,\pi_X\tilde R(u)\bigr)=\bigl(Q(u),h(\pi_XR'(u))\bigr)$ of $X\times_\phi X$ sends $u_0$ into the diagonal, which is D-open by (Q2); hence $\pi_X\tilde R=Q$ near $u_0$.
Thus $(Q,R)=\Pi^T_\phi\circ\tilde R$ near $u_0$.

\emph{The case $F=T^n$.} By induction, using that $T^{n-1}\phi$ is a Q-chart (Lemma~\ref{lem:miy-toolkit}(i)) and the case $F=T$ applied to it,
\[
  T^nX\cong T^{n-1}X\times_{T^{n-1}Y}T^nY
  \cong\bigl(X\times_Y T^{n-1}Y\bigr)\times_{T^{n-1}Y}T^nY
  \cong X\times_Y T^nY ,
\]
and the composite of these diffeomorphisms is $\Pi^{T^n}_\phi$.

\emph{The case $F=T_k$.} The inverse is $(x,(w_1,\dots,w_k))\mapsto\bigl((\Pi^T_\phi)^{-1}(x,w_1),\dots, (\Pi^T_\phi)^{-1}(x,w_k)\bigr)$, which is smooth by the case $F=T$.
\end{proof}

\begin{rem}\label{rem:not-local-diffeo}
A Q-chart need not be a local diffeomorphism: the quotient map $q\colon\R\to\R/(\Z+\alpha\Z)$ of Section~\ref{sec:irr} is injective on no nonempty open set.
Proposition~\ref{prop:etale} says that, with respect to the tangent functor, a Q-chart nevertheless behaves like a local diffeomorphism.
\end{rem}

\begin{cor}[Detection]\label{cor:detect}
Let $\phi$ be as in Proposition~\ref{prop:etale}, $F\in\Fam$, and let $a,b\colon Z\to FX$ be maps with $\pi^F_X\circ a=\pi^F_X\circ b$ and $F\phi\circ a=F\phi\circ b$.
Then $a=b$.
\end{cor}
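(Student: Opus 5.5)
The plan is to apply Proposition~\ref{prop:etale} directly; the corollary is the injectivity half of the statement that $\Pi^F_\phi$ is a diffeomorphism. Consider the two composites
\[
  \Pi^F_\phi\circ a=\bigl(\pi^F_X\circ a,\;F\phi\circ a\bigr),\qquad
  \Pi^F_\phi\circ b=\bigl(\pi^F_X\circ b,\;F\phi\circ b\bigr)\colon Z\longrightarrow X\times_Y FY .
\]
By the two hypotheses their components coincide, so $\Pi^F_\phi\circ a=\Pi^F_\phi\circ b$ as maps of sets into the fibre product. Since $\Pi^F_\phi$ is a bijection, and indeed a diffeomorphism, composing with $(\Pi^F_\phi)^{-1}$ gives $a=b$.

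The same argument can be phrased through the universal property of the pullback square asserted in Proposition~\ref{prop:etale}. Both $a$ and $b$ are mediating maps $Z\to FX$ for the cone $\bigl(\pi^F_X\circ a,\,F\phi\circ a\bigr)$ over the cospan $X\xrightarrow{\phi}Y\xleftarrow{\pi^F_Y}FY$. The cone condition holds because $\phi\circ\pi^F_X=\pi^F_Y\circ F\phi$ by naturality of the projections. Uniqueness of the mediating map then forces $a=b$.

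There is no real obstacle here. Only the bijectivity of $\Pi^F_\phi$ is needed, which is already supplied by Lemma~\ref{lem:miy-toolkit}(ii). Smoothness of the inverse is irrelevant, since equality of maps is a pointwise statement.
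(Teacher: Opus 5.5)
Your proof is correct and is essentially the paper's argument: the paper also observes that $\Pi^F_\phi\circ a=\Pi^F_\phi\circ b$ and concludes from the injectivity of $\Pi^F_\phi$. As you note, this injectivity already follows from Lemma~\ref{lem:miy-toolkit}(ii), so the smoothness of the inverse is not needed.
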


\begin{proof}
$\Pi^F_\phi\circ a=\Pi^F_\phi\circ b$, and $\Pi^F_\phi$ is injective.
\end{proof}

\subsection{Descent and lifting}\label{subsec:descent}

Recall that the subductions are the strong epimorphisms of $\Dcat$ \cite[\S2]{Miy}.
We use the following standard fact.

\begin{fact}\label{fact:subduction-coeq}
Subductions in $\Dcat$ are effective epimorphisms: a subduction $q\colon A\to B$ is the coequalizer of its kernel pair $A\times_BA\rightrightarrows A$.
Consequently, if $f\colon A\to Z$ is constant on the fibres of a subduction $q$, there is a unique smooth map $\bar f\colon B\to Z$ with $\bar f\circ q=f$; indeed, $\bar f$ exists as a map of sets, and $\bar f\circ P$ is smooth for every plot $P$ of $B$ because $P$ lifts locally along $q$.
\end{fact}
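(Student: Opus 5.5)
The plan is to verify the universal property directly from the definition of a subduction. I treat the second assertion first, since the coequalizer statement follows from it at once. Recall that a subduction $q\colon A\to B$ is a surjective smooth map such that every plot of $B$ lifts locally along $q$. Concretely, for every plot $P\colon U\to B$ and $u_0\in U$ there are an open neighbourhood $V\ni u_0$ and a plot $Q\colon V\to A$ with $q\circ Q=P|_V$.

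\emph{Existence of $\bar f$ as a map of sets.} Given $f\colon A\to Z$ constant on the fibres of $q$, define $\bar f(b):=f(a)$ for any $a\in q^{-1}(b)$. Such an $a$ exists because $q$ is surjective. The value does not depend on the choice of $a$ because $f$ is constant on fibres. By construction $\bar f\circ q=f$.

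\emph{Uniqueness.} If $\bar f'\circ q=f=\bar f\circ q$, then $\bar f'=\bar f$ because $q$ is surjective, hence an epimorphism of sets.

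\emph{Smoothness.} This is the only step with any content. It suffices to show that $\bar f\circ P$ is a plot of $Z$ for every plot $P\colon U\to B$. Fix $u_0\in U$ and choose $V$ and $Q$ as above. Then
\[
  (\bar f\circ P)|_V=\bar f\circ q\circ Q=f\circ Q ,
\]
which is a plot of $Z$ because $f$ is smooth and $Q$ is a plot. Thus $\bar f\circ P$ is locally a plot around every point of $U$. By the locality axiom of diffeologies it is a plot of $Z$, and so $\bar f$ is smooth.

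\emph{The coequalizer statement.} The kernel pair $A\times_BA=\{(a,a')\mid q(a)=q(a')\}$ carries the subspace diffeology of $A\times A$, with projections $\pr_1,\pr_2$; moreover $q\circ\pr_1=q\circ\pr_2$. Let $f\colon A\to Z$ satisfy $f\circ\pr_1=f\circ\pr_2$. Then $f(a)=f(a')$ whenever $q(a)=q(a')$, so $f$ is constant on the fibres of $q$. The previous steps then yield a unique smooth $\bar f$ with $\bar f\circ q=f$. This is exactly the universal property of the coequalizer of $\pr_1,\pr_2$.

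I expect no real obstacle here. The one point needing care is that plots lift along $q$ only locally, so smoothness of $\bar f\circ P$ must be obtained by patching through the locality axiom rather than from a single global lift.
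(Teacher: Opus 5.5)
Your proof is correct and follows the paper's justification, just written out in full: you define $\bar f$ on fibres, get uniqueness from surjectivity of $q$, and get smoothness of $\bar f\circ P$ from the local lifts $q\circ Q=P|_V$ together with the locality axiom. The coequalizer property then follows because the maps equalizing the kernel pair are exactly those constant on the fibres of $q$.
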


\begin{defn}\label{def:psi-invariant}
Let $\phi\colon X\to Y$ be a Q-chart and $F,F'\in\Fam$.
A map $f\colon FX\to F'X$ over $X$ is \emph{$\Psi(\phi)$-invariant} if
\[
  f\circ Fh=F'h\circ f\quad\text{on }F(\dom h)\qquad\text{for all }h\in\Psi(\phi).
\]
A map $f\colon FX\to Z$ to a diffeological space $Z$ is \emph{$\Psi(\phi)$-invariant} if $f\circ Fh=f$ on $F(\dom h)$ for all $h\in\Psi(\phi)$.
\end{defn}

Both sides of the first equation make sense because $f$ is a map over $X$, so that $f(F(\dom h))\subseteq F'(\dom h)$ (Definition~\ref{def:over}).

The following proposition is the descent principle used throughout the paper: $\Psi(\phi)$-invariance is necessary and sufficient for descent along $F\phi$, every map over $Y$ has a unique invariant lift, and descent respects composition.

\begin{prop}[Descent and lifting]\label{prop:descent}
Let $\phi\colon X\to Y$ be a Q-chart with $X$ elastic (hence $Y$ elastic), and let $F,F'\in\Fam$.
\begin{enumerate}[label=\textup{(\roman*)},leftmargin=*]
\item A map $f\colon FX\to Z$ is of the form $f=f_Y\circ F\phi$ for some map $f_Y\colon FY\to Z$ if and only if $f$ is $\Psi(\phi)$-invariant; in this case $f_Y$ is unique.
\item Let $f\colon FX\to F'X$ be a map over $X$.
  There exists a map $f_Y\colon FY\to F'Y$ over $Y$ with
  \[
    F'\phi\circ f=f_Y\circ F\phi
  \]
if and only if $f$ is $\Psi(\phi)$-invariant; in this case $f_Y$ is unique.
Conversely, every map $f_Y\colon FY\to F'Y$ over $Y$ arises in this way from a unique $\Psi(\phi)$-invariant map $f$ over $X$, namely
  \[
    f=(\Pi^{F'}_\phi)^{-1}\circ\bigl\langle\pi^F_X,\;f_Y\circ F\phi\bigr\rangle .
  \]
We write $f\leftrightarrow f_Y$ for this correspondence.
\item The correspondence of \textup{(ii)} is compatible with composition and identities: if $f\leftrightarrow f_Y$ and $f'\leftrightarrow f'_Y$ for composable $f\colon FX\to F'X$, $f'\colon F'X\to F''X$, then $f'\circ f\leftrightarrow f'_Y\circ f_Y$, and $\id_{FX}\leftrightarrow\id_{FY}$.
  In particular, $f$ is a diffeomorphism if and only if $f_Y$ is.
\end{enumerate}
\end{prop}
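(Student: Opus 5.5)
The plan is to combine Fact~\ref{fact:subduction-coeq}, which handles descent along the subduction $F\phi$, with Lemma~\ref{lem:miy-toolkit}(iii), which identifies the fibres of $F\phi$ as $\Psi(\phi)$-orbits, and with Proposition~\ref{prop:etale}, which handles lifting.

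For (i), suppose first that $f=f_Y\circ F\phi$. For $h\in\Psi(\phi)$ we have $F\phi\circ Fh=F(\phi h)=F\phi$ on $F(\dom h)$, by functoriality and locality of $F$. Hence $f\circ Fh=f$, so $f$ is invariant. Conversely, let $f$ be invariant. If $F\phi(\xi')=F\phi(\xi)$, Lemma~\ref{lem:miy-toolkit}(iii) gives $h\in\Psi(\phi)$ with $Fh(\xi')=\xi$, so $f(\xi)=f(Fh(\xi'))=f(\xi')$. Thus $f$ is constant on the fibres of $F\phi$. Since $F\phi$ is a subduction by Lemma~\ref{lem:miy-toolkit}(i), Fact~\ref{fact:subduction-coeq} gives a unique smooth $f_Y$ with $f=f_Y\circ F\phi$. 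Uniqueness also follows directly from surjectivity of $F\phi$.

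For (ii), apply (i) with $Z=F'Y$ to the map $F'\phi\circ f$. Invariance of $f$ gives
\[
F'\phi\circ f\circ Fh=F'\phi\circ F'h\circ f=F'\phi\circ f,
\]
so $f_Y$ exists. To see that $f_Y$ is over $Y$, compute
\[
\pi^{F'}_Y f_Y F\phi=\pi^{F'}_Y F'\phi\, f=\phi\,\pi^{F'}_X f=\phi\,\pi^F_X=\pi^F_Y F\phi,
\]
and cancel the epimorphism $F\phi$.

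Conversely, suppose $f_Y$ exists. We must show $f\circ Fh=F'h\circ f$ on $F(\dom h)$. We use Corollary~\ref{cor:detect} on the D-open piece $F'X$, with domain $Z=F(\dom h)$, and check the two conditions:
\begin{itemize}
\item[] the base points agree: $\pi^{F'}_X f\,Fh=\pi^F_X Fh=h\,\pi^F_X=h\,\pi^{F'}_X f=\pi^{F'}_X F'h\,f$;
\item[] the images under $F'\phi$ agree: $F'\phi\, f\,Fh=f_Y F\phi\, Fh=f_Y F\phi=F'\phi\, f=F'\phi\, F'h\, f$.
\end{itemize}
Hence $f$ is invariant.

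For the lifting direction, start from a map $f_Y$ over $Y$ and define $f$ by the displayed formula. This is smooth because $\langle\pi^F_X,f_Y F\phi\rangle$ lands in $X\times_Y F'Y$: indeed $\pi^{F'}_Y f_Y F\phi=\pi^F_Y F\phi=\phi\,\pi^F_X$. The inverse $(\Pi^{F'}_\phi)^{-1}$ is smooth by Proposition~\ref{prop:etale}. By construction $f$ is over $X$ and satisfies $F'\phi\circ f=f_Y\circ F\phi$, so it is invariant by the previous paragraph. If $f'$ is another map over $X$ with the same property, then $\Pi^{F'}_\phi f=\Pi^{F'}_\phi f'$, so $f=f'$. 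This gives uniqueness, and with it the bijection $f\leftrightarrow f_Y$.

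For (iii), compute
\[
F''\phi\, f' f=f'_Y\, F'\phi\, f=f'_Y f_Y\, F\phi,
\]
and note that $f'f$ is over $X$. By the uniqueness statements in (ii), $f'f\leftrightarrow f'_Y f_Y$. Trivially $\id\leftrightarrow\id$. If $f$ is a diffeomorphism, its inverse is a map over $X$ satisfying $F\phi\circ f^{-1}=g_Y\circ F'\phi$ for some $g_Y$: indeed $f^{-1}$ is invariant because $f$ is, so (ii) applies. Compatibility with composition then gives $g_Y f_Y=\id$ and $f_Y g_Y=\id$. The reverse implication is the same argument run through the lifting bijection.

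I expect the main obstacle to be the invariance direction in (ii). It needs Corollary~\ref{cor:detect} applied on the restricted domain $F(\dom h)$, together with the identification of $F\dom h$ with a D-open subset of $FX$ given by locality. The formal manipulations elsewhere are routine.
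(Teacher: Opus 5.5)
Your proposal is correct and follows the paper's proof essentially step for step. Part (i) uses Lemma~\ref{lem:miy-toolkit}(iii) together with Fact~\ref{fact:subduction-coeq}. Part (ii) applies (i) to $F'\phi\circ f$, and uses Corollary~\ref{cor:detect} on $F(\dom h)$ both for the converse and for invariance of the lift $(\Pi^{F'}_\phi)^{-1}\circ\langle\pi^F_X,f_Y\circ F\phi\rangle$. Part (iii) applies uniqueness of the correspondence to $f^{-1}$. The only cosmetic difference is that you obtain invariance of the lift by citing your earlier ``existence implies invariance'' step instead of repeating the detection argument, which is the same reasoning.
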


\begin{proof}
(i) If $f=f_Y\circ F\phi$, then $f\circ Fh=f_Y\circ F(\phi h)=f$, so $f$ is invariant.
Conversely, let $f$ be invariant.
If $F\phi(\xi)=F\phi(\xi')$, then by Lemma~\ref{lem:miy-toolkit}(iii) there is $h\in\Psi(\phi)$ with $Fh(\xi')=\xi$, hence $f(\xi)=f(Fh(\xi'))=f(\xi')$.
Since $F\phi$ is a subduction, Fact~\ref{fact:subduction-coeq} gives a unique $f_Y$ with $f_Y\circ F\phi=f$.
Uniqueness in both directions follows from surjectivity of $F\phi$.

(ii) If $f_Y$ over $Y$ with $F'\phi\circ f=f_Y\circ F\phi$ exists, then for $h\in\Psi(\phi)$ the maps $f\circ Fh$ and $F'h\circ f$ lie over $h\circ\pi^F_X$ and have the same image $f_Y\circ F\phi$ under $F'\phi$, so they coincide by Corollary~\ref{cor:detect}; thus $f$ is invariant.
Conversely, let $f$ be invariant and over $X$.
Then $F'\phi\circ f$ is invariant in the sense of (i), because $F'\phi\circ f\circ Fh=F'\phi\circ F'h\circ f= F'(\phi h)\circ f=F'\phi\circ f$, so by (i) there is a unique $f_Y$ with $f_Y\circ F\phi=F'\phi\circ f$; it is a map over $Y$ since $\pi^{F'}_Y f_Y F\phi= \phi\,\pi^{F'}_Xf=\phi\,\pi^F_X=\pi^F_Y F\phi$ and $F\phi$ is surjective.
Finally, given $f_Y$ over $Y$, the map
\[
  f:=(\Pi^{F'}_\phi)^{-1}\circ\bigl\langle\pi^F_X,\;f_Y\circ F\phi\bigr\rangle
  \colon FX\to F'X
\]
is well defined and smooth by Proposition~\ref{prop:etale}, is a map over $X$, and satisfies $F'\phi\circ f=f_Y\circ F\phi$.
It is invariant: for $h\in\Psi(\phi)$ both $f\circ Fh$ and $F'h\circ f$ lie over $h\circ\pi^F_X$ and have the same image $f_Y\circ F\phi$ under $F'\phi$, so they coincide by Corollary~\ref{cor:detect}.
The two constructions are mutually inverse by surjectivity of $F\phi$ and by Corollary~\ref{cor:detect}.

(iii) The composite $f'\circ f$ is again an invariant map over $X$, and $F''\phi\circ f'\circ f=f'_Y\circ F'\phi\circ f=f'_Y\circ f_Y\circ F\phi$; clearly $\id_{FX}\leftrightarrow\id_{FY}$.
If $f$ is a diffeomorphism, then $f^{-1}$ is a map over $X$ (hence maps $F'(\dom h)$ onto $F(\dom h)$) and is invariant, since $f\circ Fh=F'h\circ f$ gives $f^{-1}\circ F'h=Fh\circ f^{-1}$; if $f^{-1}\leftrightarrow g_Y$, then $g_Y\circ f_Y$ and $f_Y\circ g_Y$ correspond to identities, so $g_Y=f_Y^{-1}$ by injectivity of the correspondence.
The converse is the same argument applied to the lift of $f_Y^{-1}$.
\end{proof}

\begin{rem}\label{rem:descent-general}
We shall also use that the maps
\[
  \id_\R\times F\phi\colon\R\times FX\to\R\times FY,\qquad
  T(T_2\phi)\colon T(T_2X)\to T(T_2Y)
\]
are subductions, hence surjective: the first because products of subductions are subductions, the second by Lemma~\ref{lem:miy-toolkit}(i) and because $T$ preserves subductions \cite[Prop.~2.5]{Shio}.
\end{rem}

% =====================================================================
\section{Descent of Riemannian structures along Q-charts}\label{sec:transport}
% =====================================================================

Throughout this section $\phi\colon X\to Y$ is a Q-chart and $X$ is elastic; then $Y$ is elastic by \cite[Thm.~3.24]{Miy}.

\begin{thm}\label{thm:transport}
For a $\Psi(\phi)$-invariant structure on $X$ we write the subscript $Y$ for the corresponding structure on $Y$ given by \textup{(1)--(3)} below.
\begin{enumerate}[label=\textup{(\arabic*)},leftmargin=*]
\item \textup{(Vertical connections)} For every $\Psi(\phi)$-invariant vertical connection $K$ on $X$ there is a unique vertical connection $K_Y$ on $Y$ such that
  \begin{equation}\label{eq:K-rel}
    K_Y\circ T^2\phi=T\phi\circ K ,
  \end{equation}
and every vertical connection on $Y$ arises in this way from a unique $\Psi(\phi)$-invariant $K$.
Moreover
  \[
    \Theta^{K_Y}_Y\circ T^2\phi=T_3\phi\circ\Theta^K_X,\qquad
    C_{K_Y}\circ T^3\phi=T\phi\circ C_K ,
  \]
and $K$ is torsion-free \textup{(}resp.\ effective, flat\textup{)} if and only if $K_Y$ is.
\item \textup{(Horizontal and full connections)} For every $\Psi(\phi)$-invariant horizontal connection $H$ on $X$ there is a unique horizontal connection $H_Y$ on $Y$ such that
  \begin{equation}\label{eq:H-rel}
    H_Y\circ T_2\phi=T^2\phi\circ H ,
  \end{equation}
and every horizontal connection on $Y$ arises in this way from a unique $\Psi(\phi)$-invariant $H$.
For corresponding $K\leftrightarrow K_Y$ and $H\leftrightarrow H_Y$, the pair $(K,H)$ is a full connection if and only if $(K_Y,H_Y)$ is.
If $K$ is $\Psi(\phi)$-invariant and effective, its partner $H$ is $\Psi(\phi)$-invariant and $H_Y$ is the partner of $K_Y$.
\item \textup{(Metrics)} For every $\Psi(\phi)$-invariant Riemannian metric $g$ on $X$ there is a unique Riemannian metric $g_Y$ on $Y$ such that $g=g_Y\circ T_2\phi$, and every Riemannian metric on $Y$ arises in this way from a unique $\Psi(\phi)$-invariant $g$.
  Every $T_x\phi\colon(T_xX,g)\to (T_{\phi(x)}Y,g_Y)$ is a linear isometry.
\item \textup{(Levi-Civita connections)} For corresponding $K\leftrightarrow K_Y$ and $g\leftrightarrow g_Y$, $K$ is a Levi-Civita connection of $g$ if and only if $K_Y$ is a Levi-Civita connection of $g_Y$.
\item \textup{(Geodesics)} For corresponding $K\leftrightarrow K_Y$, a path $\gamma\colon\R\to X$ is a geodesic of $K$ if and only if $\phi\circ\gamma$ is a geodesic of $K_Y$.
\end{enumerate}
\end{thm}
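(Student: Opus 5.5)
The existence and uniqueness statements in (1)--(3) come directly from Proposition~\ref{prop:descent}. For $K$ we take $F=T^2$, $F'=T$; for $H$, $F=T_2$, $F'=T^2$; for $g$, part~(i) with $F=T_2$, $Z=\R$. Axiom~(b) of a vertical connection, axioms~(a),(b) of a horizontal one and the definition of a metric show these are maps over the base, so (ii) applies. What remains is a uniform principle: every axiom is an equation between composites of the structure maps ($\pi,0,+,\kappa,\lambda,\tau,T_{(2)}\kappa,\chi,\mu,U$) with $K$, $H$, $g$ and their tangent maps, and such equations transfer in both directions.

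\emph{From $X$ to $Y$.} Precompose both sides of an axiom for $K_Y$ with the appropriate $F\phi$, or with $\id_\R\times F\phi$ or $T(T_2\phi)$ as in Remark~\ref{rem:descent-general}. Then use naturality \eqref{eq:nat-pi}--\eqref{eq:nat-tau}, Lemma~\ref{lem:nat-T2kappa}, the relation \eqref{eq:K-rel} and its tangent $T(K_Y)\circ T^3\phi=T^2\phi\circ T(K)$ to move $\phi$ to the left. The two sides become $F'\phi$ applied to the two sides of the axiom on $X$, and surjectivity of the subduction we precomposed with gives the axiom on $Y$.

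\emph{From $Y$ to $X$.} Here we use Corollary~\ref{cor:detect}. Both sides of an axiom on $X$ are maps over $X$ with the same base projection, and after applying $F'\phi$ they agree by the $Y$-axiom. For (c),(d) of Definition~\ref{def:hconn}, which land in $T^2M\times_{T\pi}T^2M$, we compose with $\chi$ and its naturality. The same computation gives $\Theta^{K_Y}\circ T^2\phi=T_3\phi\circ\Theta^K$; effectivity then follows from Proposition~\ref{prop:descent}(iii) applied to $\Theta^K\leftrightarrow\Theta^{K_Y}$. Torsion-freeness and (F1),(F2) are handled the same way, and the formula for the partner shows its invariance by (iii).

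The curvature needs $T^3\phi$ to respect the fibrewise difference in $TM$, which follows from \eqref{eq:nat-plus}, \eqref{eq:nat-kappa}. Then $C_{K_Y}T^3\phi=T\phi\,C_K$, and flatness transfers by detection against $0_X\pi^{T^3}_X$. For metrics, $g=g_Y\circ T_2\phi$ together with the bijectivity of $T_x\phi$ (Lemma~\ref{lem:miy-toolkit}(ii)) and its linearity gives both the isometry claim and positive-definiteness in each direction.

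For (4) the right side of \eqref{eq:LC} transforms through $T(T_2\phi)$ via $\chi$. We also need $d(g_Y)\circ T(T_2\phi)=\pr_2 T(g_YT_2\phi)=d(g)$. The two equations therefore differ by precomposition with the surjection $T(T_2\phi)$, and they are equivalent because the target is $\R$, with no detection needed.

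For (5), $T(\dot{(\phi\gamma)})\partial=T^2\phi\, T(\dot\gamma)\partial$. Hence $K_Y T(\dot{(\phi\gamma)})\partial=T\phi\,K T(\dot\gamma)\partial$, to be compared with $0_Y\phi\gamma=T\phi\,0_X\gamma$. Both maps $\R\to TX$ lie over $\gamma$, so Corollary~\ref{cor:detect} gives the equivalence.

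The main obstacle is the bookkeeping for the horizontal axioms (c),(d) and for (F2). There one must check that $T(H)$, $\mu$, $U$ and the fibrewise sum in $\pi_{TM}$ are compatible with $\phi$ through the isomorphism $\chi$. One must also check that both sides genuinely lie over the same map to $X$ (respectively to $TX$ for sums in $\pi_{TX}$), so that detection applies; where the natural base is $TX$, detection along the Q-chart $T\phi$ should be used instead, which is legitimate by Lemma~\ref{lem:miy-toolkit}(i).
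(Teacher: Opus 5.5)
Your proposal is correct and follows essentially the same route as the paper. It uses Proposition~\ref{prop:descent} for existence, uniqueness and lifting, and transfers the axioms both ways: by surjectivity of $D\phi$ in one direction and by Corollary~\ref{cor:detect} in the other (which the paper packages as Lemma~\ref{lem:transfer}), together with the correspondence $\Theta^K\leftrightarrow\Theta^{K_Y}$ for effectivity and partners, fibrewise linear algebra for metrics, precomposition with the surjection $T(T_2\phi)$ for the Levi-Civita identity, and detection over $\gamma$ for geodesics. Two small remarks: in the curvature step it is $T\phi$, not $T^3\phi$, that must respect the fibrewise difference in $TM$; and detection over the base $X$ already suffices for (F2), so passing to the Q-chart $T\phi$ is harmless but unnecessary.
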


Existence, uniqueness and the converse lifting statements follow from Proposition~\ref{prop:descent}.
It remains to show that the defining identities of the respective structures are preserved under descent and lifting.
We shall repeatedly use the following principle.
In it, $D$ denotes any one of the functors
\[
  F,\qquad \R\times F,\qquad T\circ T_2\qquad(F\in\Fam);
\]
in each case $D\phi$ is a subduction, hence surjective (Lemma~\ref{lem:miy-toolkit}(i) and Remark~\ref{rem:descent-general}).

\begin{lem}[Transfer]\label{lem:transfer}
Let $D$ be as above and $F'\in\Fam$.
Suppose that, for $M=X,Y$, maps $a_M,b_M\colon DM\to F'M$ satisfy
\[
  F'\phi\circ a_X=a_Y\circ D\phi,\qquad F'\phi\circ b_X=b_Y\circ D\phi,
  \qquad \pi^{F'}_X\circ a_X=\pi^{F'}_X\circ b_X .
\]
Then $a_X=b_X$ if and only if $a_Y=b_Y$.
The analogous statement holds for maps $a_M,b_M\colon DM\to\R$ with $a_X=a_Y\circ D\phi$ and $b_X=b_Y\circ D\phi$.
\end{lem}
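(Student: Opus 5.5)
The proof rests on two facts: $D\phi$ is surjective, and Corollary~\ref{cor:detect} detects equality of maps into $F'X$. We prove the two implications separately and then treat the real-valued variant.

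\emph{From $X$ to $Y$.} Suppose $a_X=b_X$. Composing with $D\phi$ on the right gives
\[
  a_Y\circ D\phi=F'\phi\circ a_X=F'\phi\circ b_X=b_Y\circ D\phi .
\]
Here we use that $D\phi$ is a subduction for each of the three kinds of $D$, by Lemma~\ref{lem:miy-toolkit}(i) and Remark~\ref{rem:descent-general}. A subduction is surjective, hence right-cancellable on underlying maps. Since smooth maps are determined by their underlying functions, $a_Y=b_Y$.

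\emph{From $Y$ to $X$.} Suppose $a_Y=b_Y$. Then
\[
  F'\phi\circ a_X=a_Y\circ D\phi=b_Y\circ D\phi=F'\phi\circ b_X .
\]
By hypothesis we also have $\pi^{F'}_X\circ a_X=\pi^{F'}_X\circ b_X$. Corollary~\ref{cor:detect}, applied with $Z=DX$ and $F'$ in place of $F$, now gives $a_X=b_X$. This is the only step where the Q-chart structure enters, through the pullback property of Proposition~\ref{prop:etale}. It is also why the hypothesis on the base projections is needed: $F'\phi$ alone is not injective.

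\emph{The real-valued variant.} Here $a_X=a_Y\circ D\phi$ and $b_X=b_Y\circ D\phi$. If $a_Y=b_Y$, then $a_X=b_X$ immediately. Conversely, if $a_X=b_X$, then $a_Y\circ D\phi=b_Y\circ D\phi$, and surjectivity of $D\phi$ gives $a_Y=b_Y$. No base condition is needed in this case.

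I do not expect a serious obstacle. The one point to check is that the surjectivity of $D\phi$ is available for $D=\R\times F$ and $D=T\circ T_2$, and this is exactly what Remark~\ref{rem:descent-general} records.
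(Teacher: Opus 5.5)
Your proposal is correct and matches the paper's proof essentially line for line. Both use surjectivity of $D\phi$ for the implication $a_X=b_X\Rightarrow a_Y=b_Y$ and for the converse of the real-valued case, and Corollary~\ref{cor:detect} with $Z=DX$ and $F'$ in place of $F$ for the implication $a_Y=b_Y\Rightarrow a_X=b_X$.
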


\begin{proof}
If $a_Y=b_Y$, then $F'\phi\circ a_X=F'\phi\circ b_X$, and $a_X=b_X$ by Corollary~\ref{cor:detect}.
If $a_X=b_X$, then $a_Y\circ D\phi=b_Y\circ D\phi$, and $a_Y=b_Y$ since $D\phi$ is surjective.
In the real-valued case, $a_Y=b_Y$ clearly implies $a_X=b_X$, and the converse follows from the surjectivity of $D\phi$.
\end{proof}

In all applications below, $a_M$ and $b_M$ are built from the structure maps, from the maps $K$, $H$ over $M$ and from $T$, so the base-point condition $\pi^{F'}_X a_X=\pi^{F'}_X b_X$ holds automatically (\S\ref{subsec:tangent}).
The intertwining identities are consequences of \eqref{eq:nat-pi}--\eqref{eq:nat-tau}, Lemma~\ref{lem:nat-T2kappa}, the defining relations \eqref{eq:K-rel}, \eqref{eq:H-rel}, and their images under $T$,
\begin{equation}\label{eq:TK-rel}
  T(K_Y)\circ T^3\phi=T^2\phi\circ T(K),\qquad
  T(H_Y)\circ T(T_2\phi)=T^3\phi\circ T(H).
\end{equation}

\subsection{Vertical connections}\label{subsec:proof-K}

\begin{proof}[Proof of Theorem~\ref{thm:transport}\textup{(1)}]
Axiom (b) says precisely that $K\colon T^2M\to TM$ is a map over $M$.
By Proposition~\ref{prop:descent}(ii) with $F=T^2$, $F'=T$, the relation \eqref{eq:K-rel} is a bijection between $\Psi(\phi)$-invariant maps $K\colon T^2X\to TX$ over $X$ and maps $K_Y\colon T^2Y\to TY$ over $Y$.
By Lemma~\ref{lem:transfer}, it remains only to verify that the two sides of axioms (a), (c)--(f) satisfy the required intertwining relations.

(a) Here $D=T$: $\;T\phi\,K\,\lambda_X=K_Y\,T^2\phi\,\lambda_X=K_Y\,\lambda_Y\,T\phi$ and $T\phi\,\id_{TX}=\id_{TY}\,T\phi$.

(c) Here $D=F'=T^2$.
On the left, $T^2\phi\,\lambda_X\,K=\lambda_Y\,T\phi\,K= \lambda_Y\,K_Y\,T^2\phi$.
On the right, by \eqref{eq:TK-rel} and \eqref{eq:nat-lambda} for $f=T\phi$,
\[
  T^2\phi\,T(K)\,\lambda_{TX}=T(K_Y)\,T^3\phi\,\lambda_{TX}
  =T(K_Y)\,\lambda_{TY}\,T^2\phi .
\]

(d) The left-hand side is as in (c).
On the right,
\begin{align*}
  T^2\phi\,T(K)\,\tau_{TX}\,T(\lambda_X)
  &=T(K_Y)\,T^3\phi\,\tau_{TX}\,T(\lambda_X)
   =T(K_Y)\,\tau_{TY}\,T^3\phi\,T(\lambda_X)\\
  &=T(K_Y)\,\tau_{TY}\,T(T^2\phi\,\lambda_X)
   =T(K_Y)\,\tau_{TY}\,T(\lambda_Y\,T\phi)
   =T(K_Y)\,\tau_{TY}\,T(\lambda_Y)\,T^2\phi .
\end{align*}

(e) Here $D=\R\times T^2$.
By \eqref{eq:nat-kappa} for $f=T\phi$ and $f=\phi$,
\begin{align*}
  T\phi\,K\,\kappa_{TX}&=K_Y\,T^2\phi\,\kappa_{TX}
   =K_Y\,\kappa_{TY}\,(\id_\R\times T^2\phi),\\
  T\phi\,\kappa_X\,(\id_\R\times K)&=\kappa_Y\,(\id_\R\times T\phi\,K)
   =\kappa_Y\,(\id_\R\times K_Y)\,(\id_\R\times T^2\phi).
\end{align*}

(f) As in (e), with $\kappa_{TM}$ replaced by $T_{(2)}\kappa_M$ and \eqref{eq:nat-kappa} by Lemma~\ref{lem:nat-T2kappa}.

\emph{Torsion.} With $D=T^2$, $F'=T$: $T\phi\,K\,\tau_X=K_Y\,T^2\phi\,\tau_X= K_Y\,\tau_Y\,T^2\phi$ and $T\phi\,K=K_Y\,T^2\phi$.
Hence $K\tau_X=K$ if and only if $K_Y\tau_Y=K_Y$.

\emph{Curvature.} By \eqref{eq:K-rel}, \eqref{eq:TK-rel} and \eqref{eq:nat-tau} for $f=T\phi$,
\[
  T\phi\,K\,T(K)=K_Y\,T(K_Y)\,T^3\phi,\qquad
  T\phi\,K\,T(K)\,\tau_{TX}=K_Y\,T(K_Y)\,\tau_{TY}\,T^3\phi .
\]
Since $T\phi$ is additive on fibres \eqref{eq:nat-plus}, subtracting gives $C_{K_Y}\circ T^3\phi=T\phi\circ C_K$.
As $T\phi\,0_X\,\pi^{T^3}_X=0_Y\,\pi^{T^3}_Y\,T^3\phi$, Lemma~\ref{lem:transfer} shows that $K$ is flat if and only if $K_Y$ is.

\emph{Effectivity.} By \eqref{eq:nat-pi} for $f=T\phi$, the naturality $T(\pi_Y)\,T^2\phi=T\phi\,T(\pi_X)$, and \eqref{eq:K-rel}, the three components of $\Theta$ intertwine, so $\Theta^{K_Y}_Y\circ T^2\phi=T_3\phi\circ\Theta^K_X$.
Moreover $\Theta^K_X$ is a $\Psi(\phi)$-invariant map over $X$, because its components are.
Thus $\Theta^K_X$ and $\Theta^{K_Y}_Y$ correspond under Proposition~\ref{prop:descent}(ii), and by Proposition~\ref{prop:descent}(iii) one is a diffeomorphism if and only if the other is.
\end{proof}

\subsection{Horizontal and full connections}\label{subsec:proof-H}

\begin{proof}[Proof of Theorem~\ref{thm:transport}\textup{(2)}]
By axiom (b), $\pi^{T^2}_M\circ H=\pi_M\circ\pr_2=\pi^{T_2}_M$, so $H$ is a map over $M$, and Proposition~\ref{prop:descent}(ii) with $F=T_2$, $F'=T^2$ gives the bijection of maps over $X$ and over $Y$.
We check the intertwining identities.
The projections satisfy $\pr_i\circ T_2\phi=T\phi\circ\pr_i$, and by naturality of $\chi$ \eqref{eq:chi} the map $T(T_2\phi)$ corresponds to $T^2\phi\times_{T\phi}T^2\phi$.
Hence \eqref{eq:nat-lambda} for $f=\phi$ and \eqref{eq:nat-zero} for $f=T\phi$ give
\begin{equation}\label{eq:sharp}
  (\lambda_Y\times0_{TY})\circ T_2\phi=T(T_2\phi)\circ(\lambda_X\times0_{TX}),
  \qquad
  (0_{TY}\times\lambda_Y)\circ T_2\phi=T(T_2\phi)\circ(0_{TX}\times\lambda_X).
\end{equation}

(a), (b) With $D=T_2$, $F'=T$: $T\phi\,T(\pi_X)\,H=T(\pi_Y)\,T^2\phi\,H= T(\pi_Y)\,H_Y\,T_2\phi$ and $T\phi\,\pr_1=\pr_1\,T_2\phi$; similarly $T\phi\,\pi_{TX}\,H=\pi_{TY}\,H_Y\,T_2\phi$ and $T\phi\,\pr_2=\pr_2\,T_2\phi$.

(c) With $D=T_2$, $F'=T^3$: on the left $T^3\phi\,\lambda_{TX}\,H=\lambda_{TY}\,T^2\phi\,H=\lambda_{TY}\,H_Y\,T_2\phi$; on the right, by \eqref{eq:TK-rel} and \eqref{eq:sharp},
\[
  T^3\phi\,T(H)\,(\lambda_X\times0_{TX})
  =T(H_Y)\,T(T_2\phi)\,(\lambda_X\times0_{TX})
  =T(H_Y)\,(\lambda_Y\times0_{TY})\,T_2\phi .
\]

(d) On the left, $T^3\phi\,\tau_{TX}\,T(\lambda_X)\,H=\tau_{TY}\,T^3\phi\,T(\lambda_X)\,H =\tau_{TY}\,T(\lambda_Y)\,T^2\phi\,H=\tau_{TY}\,T(\lambda_Y)\,H_Y\,T_2\phi$; on the right one argues as in (c) with the second identity of \eqref{eq:sharp}.

(e), (f) With $D=\R\times T_2$, using $\kappa^{(i)}_Y\circ(\id_\R\times T_2\phi)=T_2\phi\circ\kappa^{(i)}_X$, \eqref{eq:nat-kappa} for $f=T\phi$ and Lemma~\ref{lem:nat-T2kappa}:
\begin{align*}
  T^2\phi\,H\,\kappa^{(1)}_X&=H_Y\,\kappa^{(1)}_Y\,(\id_\R\times T_2\phi), &
  T^2\phi\,\kappa_{TX}\,(\id_\R\times H)&=\kappa_{TY}\,(\id_\R\times H_Y)\,
    (\id_\R\times T_2\phi),\\
  T^2\phi\,H\,\kappa^{(2)}_X&=H_Y\,\kappa^{(2)}_Y\,(\id_\R\times T_2\phi), &
  T^2\phi\,T_{(2)}\kappa_X\,(\id_\R\times H)&=T_{(2)}\kappa_Y\,(\id_\R\times H_Y)\,
    (\id_\R\times T_2\phi).
\end{align*}

\emph{Full connections.} For (F1), with $D=T_2$ and $F'=T$, $T\phi\,K\,H=K_Y\,T^2\phi\,H=K_Y\,H_Y\,T_2\phi$ and $T\phi\,0_X\,\pi_X\,\pr_1=0_Y\,\phi\,\pi_X\,\pr_1=0_Y\,\pi_Y\,\pr_1\,T_2\phi$.
For (F2), with $D=F'=T^2$, we first note
\[
  \mu_Y\circ T_2\phi=T^2\phi\circ\mu_X,\qquad
  U_Y\circ T^2\phi=T_2\phi\circ U_X,\qquad
  \langle K_Y,\pi_{TY}\rangle\circ T^2\phi=T_2\phi\circ\langle K,\pi_{TX}\rangle .
\]
The first follows from the naturality of $\lambda$ and $0$ (\eqref{eq:nat-lambda} for $f=\phi$ and \eqref{eq:nat-zero} for $f=T\phi$), together with that of the fibrewise addition, $+_Y\circ T_2\phi=T\phi\circ+_X$ \eqref{eq:nat-plus}; applying $T$ to the latter gives
\[
  T(+_Y)\circ T(T_2\phi)=T^2\phi\circ T(+_X).
\]
The second and third identities follow from \eqref{eq:nat-pi} and \eqref{eq:K-rel}.
The sum in (F2) is taken in the bundle $\pi_{TM}$, and $T^2\phi=T(T\phi)$ is additive for this bundle by \eqref{eq:nat-plus} applied to $f=T\phi$.
Hence
\[
  T^2\phi\circ\bigl(\mu_X\langle K,\pi_{TX}\rangle+H\,U_X\bigr)
  =\bigl(\mu_Y\langle K_Y,\pi_{TY}\rangle+H_Y\,U_Y\bigr)\circ T^2\phi,
\]
and trivially $T^2\phi\circ\id=\id\circ T^2\phi$.

\emph{Partners.} Let $K$ be $\Psi(\phi)$-invariant and effective, with partner $H$.
By (a), (b) and (F1), $\Theta^K_X\circ H=\sigma_X$ with $\sigma_M:=\langle\pr_2,\pr_1,0_M\pi_M\pr_1\rangle\colon T_2M\to T_3M$, so $H=(\Theta^K_X)^{-1}\circ\sigma_X$.
By the proof of (1), $\Theta^K_X$ is a $\Psi(\phi)$-invariant diffeomorphism over $X$, so its inverse is $\Psi(\phi)$-invariant as well (proof of Proposition~\ref{prop:descent}(iii)).
Since $\sigma_X$ also commutes with the tangent maps of all $h\in\Psi(\phi)$, $H$ is $\Psi(\phi)$-invariant.
By what we have shown, $(K_Y,H_Y)$ is a full connection, so $H_Y$ is the partner of $K_Y$ by uniqueness \cite[Prop.~3.19]{Shio}.
\end{proof}

\subsection{Metrics}\label{subsec:proof-g}

\begin{proof}[Proof of Theorem~\ref{thm:transport}\textup{(3)}]
Proposition~\ref{prop:descent}(i) with $F=T_2$ and $Z=\R$ gives a bijection $g\mapsto g_Y$ between $\Psi(\phi)$-invariant maps $T_2X\to\R$ and maps $T_2Y\to\R$.
The Riemannian metric conditions are fibrewise.
For $x\in X$ and $y=\phi(x)$ we have
\[
  g|_{(T_2)_xX}=g_Y|_{(T_2)_yY}\circ(T_x\phi\times T_x\phi),
\]
and $T_x\phi$ is an $\R$-linear bijection (\S\ref{subsec:tangent} and Lemma~\ref{lem:miy-toolkit}(ii)).
Hence $g$ is symmetric, bilinear and positive definite on $T_xX$ if and only if $g_Y$ is so on $T_yY$, and $T_x\phi$ is an isometry.
Since $\phi$ is surjective, every fibre $T_yY$ occurs in this way.
\end{proof}

\begin{rem}[Definiteness in the sense of \cite{KSS}]\label{rem:KSS-definite}
Suppose that $g$ is definite in the sense of \cite[Def.~3.7]{KSS}, i.e.\ there is a generating family $\mathcal{G}_X$ of the diffeology of $X$ such that the pullback tensor $g(P)$ \cite[Notation~4.5]{Shio} is definite for every $P\in\mathcal{G}_X$.
Then $g_Y$ is definite in the same sense.
Indeed, since $\phi$ is a subduction, every plot of $Y$ is locally of the form $\phi\circ P$ with $P$ a plot of $X$, hence locally of the form $\phi\circ P\circ f$ with $P\in\mathcal{G}_X$, or constant; so $\mathcal{G}_Y:=\{\phi\circ P\mid P\in\mathcal{G}_X\}$ generates the diffeology of $Y$, and $g_Y(\phi\circ P)=g(P)$ because $g_Y\circ T_2\phi=g$.
\end{rem}

\subsection{Levi-Civita connections and geodesics}\label{subsec:proof-LC}

\begin{proof}[Proof of Theorem~\ref{thm:transport}\textup{(4)}]
Torsion-freeness transfers by (1).
For the metric compatibility \eqref{eq:LC} we use Lemma~\ref{lem:transfer} with $D=T\circ T_2$ in its real-valued form.
Applying $T$ to $g=g_Y\circ T_2\phi$ and composing with $\pr_2\colon T\R\to\R$ gives $d(g)=d(g_Y)\circ T(T_2\phi)$.
Under the identifications $\chi_X,\chi_Y$ of \eqref{eq:chi}, $T(T_2\phi)$ corresponds to $T^2\phi\times_{T\phi}T^2\phi$, and by \eqref{eq:K-rel} and \eqref{eq:nat-pi},
\[
  (K_Y\times_\pi\pi_{TY})\circ(T^2\phi\times_{T\phi}T^2\phi)
  =\langle T\phi\,K,\,T\phi\,\pi_{TX}\rangle
  =T_2\phi\circ(K\times_\pi\pi_{TX}),
\]
and likewise for $\pi_{TY}\times_\pi K_Y$.
Composing with $g_Y$ and using $g_Y\circ T_2\phi=g$ we find that the right-hand side of \eqref{eq:LC} for $(X,g,K)$ is the right-hand side for $(Y,g_Y,K_Y)$ composed with $T(T_2\phi)$.
Hence \eqref{eq:LC} holds on $X$ if and only if it holds on $Y$.
\end{proof}

\begin{proof}[Proof of Theorem~\ref{thm:transport}\textup{(5)}]
Put $\gamma_Y:=\phi\circ\gamma$.
By functoriality, $\dot\gamma_Y=T\phi\circ\dot\gamma$ and hence, by \eqref{eq:K-rel},
\[
  T\phi\circ K\circ T(\dot\gamma)\circ\partial
  =K_Y\circ T^2\phi\circ T(\dot\gamma)\circ\partial
  =K_Y\circ T(\dot\gamma_Y)\circ\partial ,
  \qquad
  T\phi\circ0_X\circ\gamma=0_Y\circ\gamma_Y .
\]
The maps $K\,T(\dot\gamma)\,\partial$ and $0_X\gamma$ both lie over $\gamma$.
By Corollary~\ref{cor:detect}, $K\,T(\dot\gamma)\,\partial=0_X\gamma$ if and only if $K_Y\,T(\dot\gamma_Y)\,\partial=0_Y\gamma_Y$.
\end{proof}

\begin{rem}\label{rem:transport-scope}
Theorem~\ref{thm:transport} identifies the Riemannian structures on $Y$ considered in \cite{Shio} with the corresponding $\Psi(\phi)$-invariant structures on $X$.
It is not a global lifting statement for paths or for solutions of differential equations; in particular, it does not imply that every geodesic of $Y$ lifts to a geodesic of $X$.
Such statements require global path lifting along $\phi$, which is available for the Q-charts of the next section but need not hold for a general Q-chart: for the Q-chart $(-\infty,1)\sqcup(0,\infty)\to\R$ given by the inclusions, the geodesic $t\mapsto t$ of $dt^2$ on $\R$ has no lift to a path in $(-\infty,1)\sqcup(0,\infty)$.
\end{rem}

% =====================================================================
\section{Quotients by diffeologically discrete groups}\label{sec:quotient-group}
% =====================================================================

In this section $\Lambda$ is a \emph{diffeologically discrete} group, i.e.\ a group equipped with the discrete diffeology (every plot is locally constant), which acts smoothly on a diffeological space $X$ from the left, $(\eta,x)\mapsto\eta x$, and $\phi\colon X\to Y:=X/\Lambda$ denotes the quotient map, $Y$ carrying the quotient diffeology.

\subsection{Principal actions}\label{subsec:principal}

\begin{defn}\label{def:principal-action}
The action is \emph{principal} if it is free and the \emph{division map}
\[
  \divi\colon X\times_\phi X\to\Lambda,\qquad \divi(\eta x,x)=\eta,
\]
is smooth.
\end{defn}

Equivalently, the map
\[
  \Lambda\times X\longrightarrow X\times_\phi X,\qquad(\eta,x)\longmapsto(\eta x,x),
\]
is a diffeomorphism, with inverse $(x',x)\mapsto(\divi(x',x),x)$; that is, the characteristic map $\Lambda\times X\to X\times X$ is an induction.
This is the notion of principal action of \cite[art.~8.11]{PIZ13}, for which $\phi$ is a principal fibration with structure group $\Lambda$.
Since $\Lambda$ is discrete, $\phi$ is, when $Y$ is connected, a covering in the sense of \cite[art.~8.22]{PIZ13}, that is, a fibre bundle with discrete fibre.

\begin{lem}\label{lem:principal-qchart}
Let the action be principal.
For $\eta\in\Lambda$ put $\Gamma_\eta:=\{(\eta x,x)\mid x\in X\}=\divi^{-1}(\eta)$.
\begin{enumerate}[label=\textup{(\roman*)},leftmargin=*]
\item Each $\Gamma_\eta$ is D-open, $X\times_\phi X=\coprod_{\eta\in\Lambda}\Gamma_\eta$, and $\phi$ is a Q-chart.
\item For every $h\in\Psi(\phi)$ and $x\in\dom h$ there are $\eta\in\Lambda$ and a D-open $U\ni x$ with $h|_U=\eta|_U$.
\item Let $X$ be elastic and $F,F'\in\Fam$.
  A map $f\colon FX\to F'X$ over $X$ is $\Psi(\phi)$-invariant if and only if $f\circ F\eta=F'\eta\circ f$ for all $\eta\in\Lambda$; a map $f\colon FX\to Z$ is $\Psi(\phi)$-invariant if and only if $f\circ F\eta=f$ for all $\eta\in\Lambda$.
\end{enumerate}
\end{lem}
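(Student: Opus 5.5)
For (i) I will use the diffeomorphism $\Lambda\times X\cong X\times_\phi X$, $(\eta,x)\mapsto(\eta x,x)$, recorded after Definition~\ref{def:principal-action}. Since $\Lambda$ is discrete, $\{\eta\}$ is D-open in $\Lambda$, so $\Gamma_\eta=\divi^{-1}(\eta)$ is D-open by smoothness of $\divi$. Freeness makes the $\Gamma_\eta$ pairwise disjoint, and they cover $X\times_\phi X$; thus $\Lambda\times X=\coprod_\eta\{\eta\}\times X$ transports to $X\times_\phi X=\coprod_\eta\Gamma_\eta$ as diffeological spaces. Then (Q1) holds because quotient maps by group actions are subductions. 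For (Q2), the diagonal is $\Gamma_e$, which is D-open. For (Q3), the second projection restricted to $\Gamma_\eta$ is the inverse of $x\mapsto(\eta x,x)$, a diffeomorphism $X\to\Gamma_\eta$. Hence $\pr_2$ is a diffeomorphism on each D-open piece $\Gamma_\eta$, so it is a local diffeomorphism.

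For (ii), take $h\in\Psi(\phi)$ and $x\in\dom h$. The map $\dom h\to X\times_\phi X$, $y\mapsto(h(y),y)$, is smooth because $\phi h=\phi$. Composing with $\divi$ gives a smooth map $\dom h\to\Lambda$, which is locally constant on the D-topology since $\Lambda$ is discrete. More precisely, the preimage $U$ of $\{\divi(h(x),x)\}=\{\eta\}$ is D-open, contains $x$, and satisfies $h(y)=\eta y$ for all $y\in U$. Here I am using that the D-topology preimage of a D-open set under a smooth map is D-open.

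For (iii), I would first note that each $\eta\in\Lambda$, acting as a diffeomorphism of $X$ with $\phi\eta=\phi$, lies in $\Psi(\phi)$ with $\dom\eta=X$. So $\Psi(\phi)$-invariance implies $\Lambda$-equivariance (resp.\ invariance). For the converse, let $h\in\Psi(\phi)$ and $\xi\in F(\dom h)$ with base point $x$. By (ii) there are $\eta$ and a D-open $U\ni x$ with $h|_U=\eta|_U$. By locality of $T$ (\S\ref{subsec:tangent}), $Fh$ and $F\eta$ agree on $FU$, and likewise $F'h=F'\eta$ on $F'U$. Since $f$ is over $X$, we have $f(\xi)\in F'U$. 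Therefore $f(Fh(\xi))=f(F\eta(\xi))=F'\eta(f(\xi))=F'h(f(\xi))$, and the $Z$-valued case is identical.

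The main point needing care is the locality step. Restricting $h$ to $U$ and applying $F$ must really agree with the restriction of $F\eta$; for $F=T_k$ this is done componentwise. This is exactly the identification of $FU$ with $(\pi^F_M)^{-1}(U)$, so the obstacle is mild.
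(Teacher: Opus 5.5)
Your proposal is correct and follows essentially the same route as the paper's proof: D-openness of $\Gamma_\eta$ from smoothness of $\divi$ into the discrete $\Lambda$, the diagonal as $\Gamma_e$, and $\pr_2|_{\Gamma_\eta}$ inverse to $x\mapsto(\eta x,x)$. Likewise, (ii) rests on local constancy of $u\mapsto\divi(h(u),u)$, and (iii) combines $\Lambda\subseteq\Psi(\phi)$ with locality of $T$ on the D-open sets where $h$ agrees with some $\eta$.
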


\begin{proof}
(i) If $P$ is a plot of $X\times_\phi X$, then $\divi\circ P$ is a plot of $\Lambda$, hence locally constant, so $P^{-1}(\Gamma_\eta)$ is open; thus $\Gamma_\eta$ is D-open.
The decomposition is disjoint by freeness.
The diagonal is $\Gamma_e$, which gives (Q2).
The projection $\pr_2$ restricts to a bijection $\Gamma_\eta\to X$ with smooth inverse $x\mapsto(\eta x,x)$, which gives (Q3), and (Q1) holds because quotient maps are subductions.
(ii) The map $u\mapsto\divi(h(u),u)$ is smooth on $\dom h$, hence locally constant as in (i), and $h(u)=\divi(h(u),u)\,u$.
(iii) Each $\eta\in\Lambda$ is a diffeomorphism of $X$ with $\phi\eta=\phi$, i.e.\ an element of $\Psi(\phi)$ with domain $X$; this gives one direction.
Conversely, by (ii) $\dom h$ is covered by D-open sets $U$ on which $h=\eta_U$ for some $\eta_U\in\Lambda$, and by locality $Fh=F\eta_U$ on $FU$.
\end{proof}

\begin{ex}\label{ex:principal}
(a) Let $G$ be a diffeological group and $N\subseteq G$ a subgroup whose subset diffeology is discrete, acting by $\eta\cdot a:=a\eta^{-1}$.
The action is free, and $\divi(b,a)=b^{-1}a$ is smooth, because it is the restriction of a smooth map $G\times G\to G$ with values in $N$.
Hence the action is principal and $G\to G/N$ is a Q-chart; this is the principal fibration of \cite[art.~8.15]{PIZ13} and recovers \cite[Prop.~3.17]{Miy}.
Normality of $N$ is not needed.

(b) Let a countable group $\Lambda$ act freely on $X$.
By \cite[Prop.~3.16]{Miy}, $\phi$ is a Q-chart.
Since $\Gamma_\eta$ is the image of the diagonal under the diffeomorphism $(x,x')\mapsto(\eta x,x')$ of $X\times_\phi X$, it is D-open by (Q2).
Hence $\divi$ is locally constant, and therefore smooth as a map to the diffeologically discrete group $\Lambda$; the action is principal.
\end{ex}

\begin{cor}\label{cor:discrete-quotient}
Let $G$ be an elastic group \textup{(}a diffeological group whose underlying space is elastic\textup{)} and $N\trianglelefteq G$ a normal subgroup whose subset diffeology is discrete, and let $\phi\colon G\to G/N$.
The germs of elements of $\Psi(\phi)$ are precisely the germs of the left translations $L_\eta$, $\eta\in N$.
Consequently every left-invariant vertical connection, horizontal connection and Riemannian metric on $G$ is $\Psi(\phi)$-invariant and hence descends uniquely to $G/N$ by Theorem~\ref{thm:transport}.
If $G$ is connected in the D-topology, $N$ is central.
\end{cor}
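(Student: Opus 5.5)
The plan is to reduce everything to Example~\ref{ex:principal}(a) and Lemma~\ref{lem:principal-qchart}. The action in that example is $\eta\cdot a=a\eta^{-1}$, which is right translation $R_{\eta^{-1}}$. It is principal, so $\phi\colon G\to G/N$ is a Q-chart, and by Lemma~\ref{lem:principal-qchart}(ii) every $h\in\Psi(\phi)$ locally agrees with some right translation $R_{\eta}$, $\eta\in N$.

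Normality of $N$ converts right translations into left translations pointwise. Near a point $a$ we have $a\eta=(a\eta a^{-1})a$, but $a\eta a^{-1}$ depends on $a$, so a further argument is needed. The map $b\mapsto b\eta b^{-1}$ is smooth from $G$ to $G$ and takes values in $N$. Since $N$ carries the discrete subset diffeology, this map is locally constant, by the same argument as in Lemma~\ref{lem:principal-qchart}(i). Hence on a D-open neighbourhood $U$ of $a$ we get $b\eta=\eta' b$ with $\eta':=a\eta a^{-1}\in N$ fixed, so $R_\eta|_U=L_{\eta'}|_U$.

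Conversely, every $L_\eta$ with $\eta\in N$ satisfies $\phi\circ L_\eta=\phi$. Indeed $\eta a=a(a^{-1}\eta a)$ and $a^{-1}\eta a\in N$ by normality, so $L_\eta\in\Psi(\phi)$ with domain $G$. Together these give the germ statement.

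For invariance, let $f$ be a left-invariant structure, meaning $f\circ FL_\eta=F'L_\eta\circ f$ for all $\eta\in G$, and in particular for $\eta\in N$. Take $h\in\Psi(\phi)$. Its domain is covered by D-open sets $U$ on which $h=L_{\eta_U}$, and by locality of $T$ we have $Fh=FL_{\eta_U}$ on $FU$. Hence $f\circ Fh=F'h\circ f$ on $F(\dom h)$, so $f$ is $\Psi(\phi)$-invariant. For metrics the same argument applies with the condition $g\circ T_2L_\eta=g$. Theorem~\ref{thm:transport} then gives unique descent.

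For the last claim, assume $G$ is D-connected and fix $\eta\in N$. The map $b\mapsto b\eta b^{-1}$ is locally constant, so it is constant on the D-connected space $G$. Evaluating at $b=e$ gives $b\eta b^{-1}=\eta$ for all $b$, so $N$ is central.

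The step I expect to need the most care is the local constancy of $b\mapsto b\eta b^{-1}$ into $N$. The approach is to compose with an arbitrary plot $P$: the result is a plot of $G$ with values in $N$, hence a plot of $N$ for the subset diffeology, hence locally constant. It follows that preimages of points are D-open and the map is constant on a D-open neighbourhood of each point.
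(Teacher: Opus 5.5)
Your proposal is correct and follows essentially the same route as the paper. Example~\ref{ex:principal}(a) and Lemma~\ref{lem:principal-qchart}(ii) give local right translations; local constancy of the conjugation map $G\to N$ turns these into left translations; normality gives $L_\eta\in\Psi(\phi)$; locality of $T$ gives invariance; and constancy of conjugation on a D-connected $G$ gives centrality. Your plot-wise justification of the local constancy only spells out a step the paper states in one line.
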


\begin{proof}
By Example~\ref{ex:principal}(a) and Lemma~\ref{lem:principal-qchart}(ii), the germs of elements of $\Psi(\phi)$ are germs of right translations $R_\eta(a)=a\eta$, $\eta\in N$.
Since $N$ is normal, each $L_\eta$, $\eta\in N$, satisfies $\phi(\eta a)=\eta aN=aN=\phi(a)$, so $L_\eta\in\Psi(\phi)$.
Conversely, $R_\eta(a)=(a\eta a^{-1})\,a$, and $a\mapsto a\eta a^{-1}$ is a smooth map from $G$ to the discrete space $N$, hence locally constant; so $R_\eta$ coincides near each point with some $L_{\eta'}$.
A left-invariant structure commutes with the tangent maps of all $L_\eta$, hence, by locality of $T$, with those of all elements of $\Psi(\phi)$.
If $G$ is D-connected, $a\mapsto a^{-1}\eta a$ is constant, equal to its value $\eta$ at $a=e$.
\end{proof}

\subsection{Lifting curves}\label{subsec:lifting}

\begin{lem}[Path lifting]\label{lem:global-lift}
Let the action be principal.
For every path $\gamma\colon\R\to Y$ and every $x\in\phi^{-1}(\gamma(0))$ there is a unique path $\tilde\gamma\colon\R\to X$ with $\phi\circ\tilde\gamma=\gamma$ and $\tilde\gamma(0)=x$.
Any two lifts of $\gamma$ differ by the action of a unique $\eta\in\Lambda$.
\end{lem}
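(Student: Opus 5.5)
Uniqueness and the second assertion come first, since both rest on the principal structure. Let $\tilde\gamma,\tilde\gamma'$ be two lifts of $\gamma$. Then $t\mapsto(\tilde\gamma'(t),\tilde\gamma(t))$ is a path in $X\times_\phi X$. Composing with $\divi$ gives a path $\R\to\Lambda$, which is locally constant because $\Lambda$ is diffeologically discrete. Since $\R$ is connected, this path is constant, equal to some $\eta$, so $\tilde\gamma'=\eta\tilde\gamma$. The element $\eta$ is unique by freeness. If in addition $\tilde\gamma(0)=\tilde\gamma'(0)$, then freeness forces $\eta=e$. This proves uniqueness.

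For existence I would use that $\phi$ is a subduction, so $\gamma$ lifts locally. For every $t_0\in\R$ there are an open interval $I\ni t_0$ and a smooth $\sigma\colon I\to X$ with $\phi\sigma=\gamma|_I$; I shrink the plot domain to an interval if necessary. By compactness, for each $N$ finitely many such intervals cover $[-N,N]$, and I take them as a chain of overlapping intervals. On each overlap $I\cap I'$, which is an interval and hence connected, the argument of the first paragraph shows $\sigma'=\eta\sigma$ for a single $\eta\in\Lambda$. Replacing $\sigma'$ by $\eta^{-1}\sigma'$, which is still a local lift because $\phi\eta=\phi$ and $\eta$ acts smoothly, makes consecutive lifts agree on overlaps. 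Starting from the interval containing $0$, I first translate its lift so that it passes through $x$; this is possible because any two points of $\phi^{-1}(\gamma(0))$ differ by an element of $\Lambda$. Then I proceed outward in both directions.

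The pieces glue to a map $\tilde\gamma\colon\R\to X$, which is smooth because smoothness is local on plots. The glued map is independent of the chosen cover by the uniqueness already proved, so the lifts over $[-N,N]$ are compatible as $N$ grows and define a single path on all of $\R$. An alternative is to take the maximal open interval $J\ni 0$ on which a lift through $x$ exists and show $J=\R$ by extending across an endpoint with a local lift, matched by some $\eta$ on a small overlap.

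The main obstacle is this extension step. Local lifts are only defined on neighbourhoods of individual points, and the matching element $\eta$ must be constant on the whole overlap. Connectedness of the overlap intervals together with smoothness of $\divi$ is exactly what supplies that constancy. This is where principality is genuinely used, and it is what fails for the general Q-chart of Remark~\ref{rem:transport-scope}.
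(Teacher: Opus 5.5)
Your proposal is correct and follows the paper's proof: uniqueness and the second assertion come from composing a pair of lifts over a connected interval with $\divi$, and existence comes from local lifts along the subduction $\phi$, matched on connected overlaps by a single $\eta\in\Lambda$. The paper organizes the existence step as the maximal open interval $I\ni 0$ carrying a lift through $x$, extended across $\sup I$ (and $\inf I$) by one local lift. That is exactly the alternative you mention, and it avoids the chain bookkeeping over $[-N,N]$ and the compatibility check as $N$ grows.
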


When $Y$ is connected, the existence and uniqueness of $\tilde\gamma$ is the case $n=1$ of the lifting of global plots along coverings \cite[art.~8.23]{PIZ13}.
We include a short direct proof, which uses only the division map, does not require $Y$ to be connected, and applies verbatim to the maps $T^n\phi$ of Lemma~\ref{lem:lift-vf}.

\begin{proof}
Let $\tilde\gamma_1,\tilde\gamma_2\colon J\to X$ be lifts of $\gamma|_J$ over an open interval $J$.
Then $t\mapsto\divi(\tilde\gamma_2(t),\tilde\gamma_1(t))$ is a plot of $\Lambda$ on the connected set $J$, hence constant, say $\eta$, and $\tilde\gamma_2=\eta\tilde\gamma_1$ on $J$; if the lifts agree at one point, then $\eta=e$ by freeness.
This proves uniqueness and the last assertion.

For existence, let $I$ be the union of all open intervals $J\ni0$ over which $\gamma$ has a lift with value $x$ at $0$.
By uniqueness these local lifts agree on overlaps, so they glue to a lift $\tilde\gamma\colon I\to X$; moreover $I\neq\emptyset$: since $\phi$ is a subduction, $\gamma$ has a lift $\sigma$ on some open interval around $0$, and $\eta\sigma$ with $\eta:=\divi(x,\sigma(0))$ has value $x$ at $0$.
Suppose $b:=\sup I<\infty$.
There is a lift $\sigma$ of $\gamma$ over an open interval $J\ni b$, and $J\cap I$ is a nonempty open interval.
By the first part, $\tilde\gamma=\eta\sigma$ on $J\cap I$ for some $\eta$, so $\tilde\gamma$ and $\eta\sigma$ glue to a lift over $I\cup J\supsetneq I$, a contradiction.
The same argument rules out a finite lower endpoint, so $I=\R$.
\end{proof}

\begin{lem}\label{lem:lift-vf}
Let the action be principal, $X$ elastic, and $F\in\Fam$.
\begin{enumerate}[label=\textup{(\roman*)},leftmargin=*]
\item The action $\eta\mapsto F\eta$ of $\Lambda$ on $FX$ is principal, and $F\phi\colon FX\to FY$ is its quotient map.
\item Let $n\ge0$ and let $V$ be a vector field on $T^nY$.
  There is a unique vector field $\tilde V$ on $T^nX$ with $T^{n+1}\phi\circ\tilde V=V\circ T^n\phi$; it is $\Lambda$-invariant.
\item In the situation of \textup{(ii)}, let $c\colon\R\to T^nY$ be an integral curve of $V$ and $\xi\in(T^n\phi)^{-1}(c(0))$.
  The lift $\tilde c$ of $c$ through $\xi$ along $T^n\phi$ \textup{(}which exists by \textup{(i)} and Lemma~\textup{\ref{lem:global-lift})} is an integral curve of $\tilde V$.
  Conversely, $T^n\phi$ maps integral curves of $\tilde V$ to integral curves of $V$.
\end{enumerate}
\end{lem}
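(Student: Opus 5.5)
For (i) we first check that $\eta\mapsto F\eta$ is an action by functoriality, and then that it is free. Suppose $F\eta(\xi)=\xi$ with base point $x$. Then $\eta x=x$, so $\eta=e$ by freeness on $X$. For the division map we use Proposition~\ref{prop:etale}. If $F\phi(\xi)=F\phi(\xi')$, then the base points $x=\pi^F_X\xi$ and $x'=\pi^F_X\xi'$ satisfy $\phi(x)=\phi(x')$. Put $\eta:=\divi(x,x')$. Then $F\eta(\xi')$ and $\xi$ have the same base point and the same image under $F\phi$, so Corollary~\ref{cor:detect} gives $F\eta(\xi')=\xi$. Hence the division map of $FX$ is $\divi\circ(\pi^F_X\times\pi^F_X)$, which is smooth.

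It remains to see that $F\phi$ is the quotient map. It is a subduction by Lemma~\ref{lem:miy-toolkit}(i), and it is constant on $\Lambda$-orbits. Its fibres are exactly the orbits: by Lemma~\ref{lem:miy-toolkit}(iii) together with Lemma~\ref{lem:principal-qchart}(ii), or directly by the argument above. A subduction whose fibres are the orbits identifies $FY$ with $FX/\Lambda$.

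For (ii) we regard $V$ as a map $T^nY\to T^{n+1}Y$ over $T^nY$. We apply Proposition~\ref{prop:descent}(ii), more precisely its converse lifting clause, to the Q-chart $T^n\phi$ (Lemma~\ref{lem:miy-toolkit}(i)). Its input $T^nX$ is elastic, since it lies in $\Elast$ via the tangent structure, and it has the functors $\id$ and $T$. The proposition gives a unique $\Psi(T^n\phi)$-invariant map $\tilde V$ over $T^nX$ with $T(T^n\phi)\circ\tilde V=V\circ T^n\phi$, namely
\[
  \tilde V=(\Pi^T_{T^n\phi})^{-1}\circ\langle\id,V\circ T^n\phi\rangle .
\]
Uniqueness among all vector fields, not only invariant ones, follows from Corollary~\ref{cor:detect}, because any solution lies over $\id_{T^nX}$. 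By (i) and Lemma~\ref{lem:principal-qchart}(iii), $\Lambda$-invariance amounts to $\Psi(T^n\phi)$-invariance. Alternatively, $T^{n+1}\eta\circ\tilde V\circ T^n\eta^{-1}$ satisfies the same equation, so it equals $\tilde V$ by uniqueness.

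For (iii), by (i) and Lemma~\ref{lem:global-lift} the lift $\tilde c$ exists with $T^n\phi\circ\tilde c=c$. The two paths $T(\tilde c)\circ\partial$ and $\tilde V\circ\tilde c$ lie over $\tilde c$. Their images under $T^{n+1}\phi$ are $T(c)\circ\partial$ and $V\circ c$, which agree because $c$ is an integral curve. Corollary~\ref{cor:detect}, applied to the Q-chart $T^n\phi$ with $F=T$, then gives $T(\tilde c)\circ\partial=\tilde V\circ\tilde c$. The converse direction is the same calculation read backwards. If $\tilde c$ is an integral curve of $\tilde V$, then
\[
  T(T^n\phi\,\tilde c)\,\partial=T^{n+1}\phi\,\tilde V\,\tilde c=V\,T^n\phi\,\tilde c .
\]
The main point needing care is (i), especially identifying $F\phi$ with the quotient map of the induced action and getting smoothness of its division map. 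The detection corollary handles both.
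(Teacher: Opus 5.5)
Your proof is correct. Its skeleton is the paper's: freeness via base points, division map $\divi\circ(\pi^F_X\times\pi^F_X)$, Proposition~\ref{prop:descent}(ii) for (ii), and Corollary~\ref{cor:detect} for uniqueness and for (iii). Two steps are organised differently.

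In (i), the paper shows that the fibres of $F\phi$ are orbits in two moves. It takes $h\in\Psi(\phi)$ from Lemma~\ref{lem:miy-toolkit}(iii), then replaces $h$ locally by a translation using Lemma~\ref{lem:principal-qchart}(ii) and locality of $T$. You instead set $\eta:=\divi(x,x')$ and get $F\eta(\xi')=\xi$ directly from Corollary~\ref{cor:detect}. That needs only the fibrewise injectivity of $\Pi^F_\phi$, and because it names the group element explicitly, it also yields the division-map formula, which the paper simply asserts.

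In (ii) and (iii), the paper stays with the single Q-chart $\phi$ and takes $F=T^n$, $F'=T^{n+1}$. The lifted $\tilde V$ is then a priori only a map over $X$, and the paper needs a separate detection argument to show it is a section of $\pi_{T^nX}$. You apply the descent proposition to the Q-chart $T^n\phi$ with $F=T^0$, $F'=T$, so being a map over $T^nX$ already means being a vector field. The price is invoking Lemma~\ref{lem:miy-toolkit}(i) for $T^n\phi$ and the elasticity of $T^nX$, which holds because $T$ is an endofunctor of $\Elast$.

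Your conjugation argument for $\Lambda$-invariance, via $T^{n+1}\eta\circ\tilde V\circ T^n\eta^{-1}$ and uniqueness, is also sound and avoids Lemma~\ref{lem:principal-qchart}(iii).

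One point is left implicit: smoothness of the action map $\Lambda\times FX\to FX$, which the standing hypotheses require. Functoriality gives only the action axioms. Smoothness follows because plots of $\Lambda$ are locally constant and each $F\eta$ is smooth.
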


\begin{proof}
(i) The action $\Lambda\times FX\to FX$ is smooth, since every plot of $\Lambda$ is locally constant and each $F\eta$ is smooth.
If $F\eta(\xi)=\xi$, then $\eta\,\pi^F_X(\xi)=\pi^F_X(\xi)$ and $\eta=e$.
The fibres of $F\phi$ are the orbits: clearly $F\phi\circ F\eta=F\phi$, and if $F\phi(\xi')=F\phi(\xi)$, Lemma~\ref{lem:miy-toolkit}(iii) gives $h\in\Psi(\phi)$ with $Fh(\xi')=\xi$, and $Fh=F\eta$ near $\xi'$ for some $\eta$ by Lemma~\ref{lem:principal-qchart}(ii) and locality.
The division map is $(\xi,\xi')\mapsto\divi(\pi^F_X\xi,\pi^F_X\xi')$, which is smooth.
Since $F\phi$ is a subduction, it is the quotient map.

(ii) Every vector field $V\colon T^nM\to T^{n+1}M$ is automatically a map over $M$, since $\pi^{T^{n+1}}_M\circ V=\pi^{T^n}_M\circ\pi_{T^nM}\circ V=\pi^{T^n}_M$.
By Proposition~\ref{prop:descent}(ii) there is a unique $\Psi(\phi)$-invariant $\tilde V$ over $X$ with $T^{n+1}\phi\,\tilde V= V\,T^n\phi$.
It is a section: $\pi_{T^nX}\tilde V$ and $\id$ are maps over $X$ and $T^n\phi\,\pi_{T^nX}\tilde V=\pi_{T^nY}\,V\,T^n\phi=T^n\phi$, so Corollary~\ref{cor:detect} applies.
Uniqueness among all vector fields follows from Corollary~\ref{cor:detect} as well, and invariance from Lemma~\ref{lem:principal-qchart}(iii).

(iii) The maps $T(\tilde c)\circ\partial$ and $\tilde V\circ\tilde c$ from $\R$ to $T^{n+1}X$ have the same image under the base projection $\pi^{T^{n+1}}_X$, namely $\pi^{T^n}_X\circ\tilde c$, and
\[
  T^{n+1}\phi\,T(\tilde c)\,\partial=T(T^n\phi\,\tilde c)\,\partial=T(c)\,\partial
  =V\,c=V\,T^n\phi\,\tilde c=T^{n+1}\phi\,\tilde V\,\tilde c,
\]
so they coincide by Corollary~\ref{cor:detect}.
The converse is the same computation read backwards.
\end{proof}

\subsection{Geodesics and curve objects}\label{subsec:geodesics-global}

From now on $X$ is elastic, the action is principal, and $K$, $H$, $g$ are $\Lambda$-invariant (equivalently, $\Psi(\phi)$-invariant by Lemma~\ref{lem:principal-qchart}(iii)); $K_Y$, $H_Y$, $g_Y$ denote the structures on $Y$ given by Theorem~\ref{thm:transport}.

\begin{prop}\label{prop:geodesic-converse}
Let $\gamma_Y$ be a geodesic of $K_Y$ and $x\in\phi^{-1}(\gamma_Y(0))$.
The lift $\tilde\gamma$ of $\gamma_Y$ through $x$ is a geodesic of $K$ with $\dot{\tilde\gamma}(0)=(T_x\phi)^{-1}(\dot\gamma_Y(0))$.
Consequently $\gamma\mapsto\phi\circ\gamma$ maps the set of geodesics of $K$ onto the set of geodesics of $K_Y$, and its fibres are precisely the orbits of the free action of $\Lambda$ on the geodesics of $K$ given by $(\eta\cdot\gamma)(t):=\eta\gamma(t)$.
\end{prop}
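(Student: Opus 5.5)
The plan is to reduce everything to Theorem~\ref{thm:transport}(5) together with the path lifting Lemma~\ref{lem:global-lift}.

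First, Lemma~\ref{lem:global-lift} gives a unique path $\tilde\gamma\colon\R\to X$ with $\phi\circ\tilde\gamma=\gamma_Y$ and $\tilde\gamma(0)=x$. Since $\phi\circ\tilde\gamma=\gamma_Y$ is a geodesic of $K_Y$, Theorem~\ref{thm:transport}(5) shows that $\tilde\gamma$ is a geodesic of $K$. For the initial velocity, functoriality gives $T\phi\circ\dot{\tilde\gamma}=\dot\gamma_Y$, hence $T_x\phi(\dot{\tilde\gamma}(0))=\dot\gamma_Y(0)$. Since $\dot{\tilde\gamma}(0)\in T_xX$ and $T_x\phi$ is bijective by Lemma~\ref{lem:miy-toolkit}(ii), we get $\dot{\tilde\gamma}(0)=(T_x\phi)^{-1}(\dot\gamma_Y(0))$.

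For the consequences, Theorem~\ref{thm:transport}(5) shows that $\gamma\mapsto\phi\circ\gamma$ sends geodesics of $K$ to geodesics of $K_Y$. The previous paragraph shows this map is surjective, since $\phi$ is surjective and so some $x$ over $\gamma_Y(0)$ exists. Next I check that $\Lambda$ acts on geodesics of $K$. For $\eta\in\Lambda$, the path $\eta\gamma$ satisfies $\phi\circ(\eta\gamma)=\phi\circ\gamma$, which is a geodesic of $K_Y$. By Theorem~\ref{thm:transport}(5) again, $\eta\gamma$ is a geodesic of $K$. This avoids checking $\Lambda$-invariance of the geodesic equation directly, although that would also follow from $K\circ T^2\eta=T\eta\circ K$. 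The action is free because the $\Lambda$-action on $X$ is free: $\eta\gamma=\gamma$ forces $\eta\gamma(0)=\gamma(0)$, so $\eta=e$. Orbits lie in single fibres, since $\phi\eta=\phi$. Conversely, if $\phi\circ\gamma=\phi\circ\gamma'$, then $\gamma$ and $\gamma'$ are two lifts of the same path, and the last assertion of Lemma~\ref{lem:global-lift} gives $\gamma'=\eta\gamma$ for a unique $\eta$.

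There is essentially no serious obstacle, since all the work is in Theorem~\ref{thm:transport}(5) and Lemma~\ref{lem:global-lift}. The only point needing care is the initial velocity formula, namely identifying $\dot{\tilde\gamma}(0)$ as an element of the fibre $T_xX$ so that the bijectivity of $T_x\phi$ applies; this follows from $\pi_X\circ\dot{\tilde\gamma}=\tilde\gamma$.
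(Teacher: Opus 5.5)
Your proposal is correct and matches the paper's proof essentially step by step. You lift via Lemma~\ref{lem:global-lift}, apply Theorem~\ref{thm:transport}(5) both to the lift and to $\phi\circ\eta\circ\gamma=\phi\circ\gamma$, get the initial velocity from $T\phi\circ\dot{\tilde\gamma}=\dot\gamma_Y$ and the bijectivity of $T_x\phi$, and obtain the orbit description from the last assertion of Lemma~\ref{lem:global-lift} together with freeness.
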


\begin{proof}
The lift exists by Lemma~\ref{lem:global-lift} and is a geodesic by Theorem~\ref{thm:transport}(5).
Since $T\phi\circ\dot{\tilde\gamma}=\dot\gamma_Y$ and $T_x\phi$ is bijective, the initial velocity is as stated.
For $\eta\in\Lambda$ and a geodesic $\gamma$ of $K$, $\phi\circ\eta\circ\gamma= \phi\circ\gamma$ is a geodesic of $K_Y$, so $\eta\circ\gamma$ is a geodesic of $K$ by Theorem~\ref{thm:transport}(5).
Two geodesics with the same image under $\phi$ are lifts of the same path and hence differ by a unique $\eta$ (Lemma~\ref{lem:global-lift}); $\eta\circ\gamma=\gamma$ forces $\eta=e$ by freeness.
\end{proof}

\begin{thm}\label{thm:elast-curve-quotient}
If $X\in\Elastc$, then $Y\in\Elastc$.
\end{thm}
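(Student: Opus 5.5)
We need to show: for every $n\ge0$, every parametrized dynamical system $(T^nY,V,s)$ with $s\colon Z\to T^nY$ has at most one solution. Let $\Phi_1,\Phi_2\colon\R\times Z\to T^nY$ be two solutions. The plan is to lift both to solutions of a single system on $T^nX$ and invoke uniqueness there. Throughout we use Lemma~\ref{lem:lift-vf}: the action of $\Lambda$ on $T^nX$ is principal with quotient map $T^n\phi$, and $V$ has a unique $\Lambda$-invariant lift $\tilde V$ on $T^nX$ with $T^{n+1}\phi\circ\tilde V=V\circ T^n\phi$.

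The difficulty is that $s$ need not lift globally along $T^n\phi$, so I would change the parameter space. Form the pullback $\tilde Z:=Z\times_{T^nY}T^nX=\{(z,\xi)\mid s(z)=T^n\phi(\xi)\}$ with its subset diffeology, and set $\tilde s(z,\xi)=\xi$. The projection $p\colon\tilde Z\to Z$ is surjective because $T^n\phi$ is. For $i=1,2$, define $\tilde\Phi_i(t,(z,\xi))$ as the value at $t$ of the unique lift through $\xi$ of the path $t\mapsto\Phi_i(t,z)$ along $T^n\phi$, given by Lemma~\ref{lem:global-lift}. By Lemma~\ref{lem:lift-vf}(iii), each such lifted path is an integral curve of $\tilde V$. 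Once $\tilde\Phi_i$ is known to be smooth, the solution identity $T(\tilde\Phi_i)\circ(\partial\times0_{\tilde Z})=\tilde V\circ\tilde\Phi_i$ can be checked pointwise. At a point, both sides lie over the same base point and have the same image under $T^{n+1}\phi$: for the left side this follows from $T^n\phi\circ\tilde\Phi_i=\Phi_i\circ(\id\times p)$ and the solution equation for $\Phi_i$. Corollary~\ref{cor:detect} then gives equality. Since also $\tilde\Phi_i(0,-)=\tilde s$, both $\tilde\Phi_i$ solve $(T^nX,\tilde V,\tilde s)$. Because $X\in\Elastc$, this forces $\tilde\Phi_1=\tilde\Phi_2$. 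Applying $T^n\phi$ and using surjectivity of $\id_\R\times p$ yields $\Phi_1=\Phi_2$.

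\textbf{Main obstacle: smoothness of $\tilde\Phi_i$.} I expect this to be the hardest step, and would handle it by a parametrized version of the proof of Lemma~\ref{lem:global-lift}. Take a plot $u\mapsto(z(u),\xi(u))$ of $\tilde Z$ on a domain $U$, and a point $(t_0,u_0)$. The idea is to cover the compact segment $[0,t_0]\times\{u_0\}$ by finitely many boxes on each of which $(t,u)\mapsto\Phi_i(t,z(u))$ has a local smooth lift along the subduction $T^n\phi$. On overlaps, two lifts differ by the division map, which is smooth into the discrete group $\Lambda$ and hence locally constant; so we can re-translate the lifts consecutively by elements of $\Lambda$ and glue them into a smooth lift on a neighbourhood of the segment. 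At $t=0$ the glued lift agrees with $\xi(u)$ after one further translation by $\divi$, which is constant near $u_0$. By the uniqueness in Lemma~\ref{lem:global-lift}, this smooth local lift coincides with $\tilde\Phi_i$ composed with the plot. The only care needed is choosing the boxes to be products of intervals with a common neighbourhood of $u_0$, so that the overlaps are connected and the division map is genuinely constant on them.
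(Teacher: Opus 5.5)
Your overall strategy is the paper's: lift along $T^n\phi$ via Lemma~\ref{lem:lift-vf}, use uniqueness on $T^nX$, then push down. Your parametrized-lifting sketch for the smoothness of $\tilde\Phi_i$ is also sound. With boxes $J_k\times W$ and $W$ connected, the division map is constant on overlaps, and the uniqueness part of Lemma~\ref{lem:global-lift} identifies the glued lift with $\tilde\Phi_i$. But the detour through $\tilde Z$ is unnecessary, and it introduces the one step you have not justified.

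The paper never lifts a whole parametrized solution. It observes that for $\iota_z(t)=(t,z)$ one has $T(\iota_z)\circ\partial=(\partial\times0_Z)\circ\iota_z$. So each slice $\Phi_i(-,z)$ is an integral curve of $V$ with initial value $s(z)$. Lift both slices through one $\xi\in(T^n\phi)^{-1}(s(z))$ by Lemma~\ref{lem:global-lift}. By Lemma~\ref{lem:lift-vf}(iii) these are two integral curves of $\tilde V$ with the same initial value, so they coincide because $X\in\Elastc$, applied with $Z$ a point. Applying $T^n\phi$ gives $\Phi_1(-,z)=\Phi_2(-,z)$ for every $z$. Since a single path lifts smoothly by Lemma~\ref{lem:global-lift}, your ``main obstacle'' never arises.

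The unjustified step is ``Because $X\in\Elastc$, this forces $\tilde\Phi_1=\tilde\Phi_2$.'' It applies the defining property of $\Elastc$ to a system whose parameter object is the fibre product $\tilde Z=Z\times_{T^nY}T^nX$ with its subset diffeology. In the Cockett--Cruttwell--Lemay setting on which $\Elastc$ is modelled, a parametrized dynamical system lives in the tangent category. Its parameter object is therefore an elastic space, so you would have to show that $\tilde Z$ is elastic.

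Nothing in your argument or in the paper provides this. The projection $\tilde Z\to Z$ is itself the quotient map of a principal $\Lambda$-action, so you would need elasticity to pass \emph{up} such a map. Miyamoto's theorem only moves elasticity \emph{down} a Q-chart. The slice-wise argument sidesteps the issue entirely, because a point is trivially elastic.

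If $\tilde Z$ were shown to be admissible, your route would give something stronger: whole parametrized solutions lift smoothly along $T^n\phi$. The theorem itself needs only the slice-wise statement.
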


\begin{proof}
Fix $n\ge0$, a vector field $V$ on $T^nY$, a map $s\colon Z\to T^nY$, and two solutions $\Phi,\Phi'\colon\R\times Z\to T^nY$ of $(T^nY,V,s)$.
For $z\in Z$ the slices $\Phi_z:=\Phi(-,z)$ and $\Phi'_z$ are integral curves of $V$ with initial value $s(z)$: indeed $T(\Phi_z)\circ\partial=T(\Phi)\circ(\partial\times0_Z)\circ \iota_z=V\circ\Phi_z$, where $\iota_z(t)=(t,z)$ and $T(\iota_z)\circ\partial=(\partial\times0_Z)\circ\iota_z$ by naturality of the zero section (cf.\ \cite[Rem.~6.20]{Shio}).
Choose $\xi\in(T^n\phi)^{-1}(s(z))$.
By Lemma~\ref{lem:lift-vf}, the lifts of $\Phi_z$ and $\Phi'_z$ through $\xi$ are integral curves of $\tilde V$ with the same initial value, so they coincide because $X\in\Elastc$.
Applying $T^n\phi$ gives $\Phi_z=\Phi'_z$.
As $z$ was arbitrary, $\Phi=\Phi'$.
\end{proof}

\begin{prop}\label{prop:flow-descends}
Let $X\in\Elastc$, let $(K,H)$ be a $\Lambda$-invariant full connection whose geodesic spray $S$ is complete, and let $\Phi_X$ be its geodesic flow \textup{(}Convention~\ref{conv:flow}\textup{)}.
Then there exists a unique smooth map $\Phi_Y\colon\R\times TY\to TY$ such that
\[
  \Phi_Y\circ(\id_\R\times T\phi)=T\phi\circ\Phi_X .
\]
Moreover, $\Phi_Y$ is a geodesic flow of $(K_Y,H_Y)$; in particular, the geodesic spray $S_Y$ of $(K_Y,H_Y)$ is complete.
\end{prop}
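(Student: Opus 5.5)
The plan is to obtain $\Phi_Y$ by descent along the subduction $\id_\R\times T\phi$, and then to check the solution equations on $X$ and transfer them to $Y$.

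\emph{Step 1: invariance.} Since $\id_\R\times T\phi$ is a subduction (Remark~\ref{rem:descent-general}), Fact~\ref{fact:subduction-coeq} reduces existence and uniqueness of $\Phi_Y$ to showing that $T\phi\circ\Phi_X$ is constant on the fibres of $\id_\R\times T\phi$. By Lemma~\ref{lem:lift-vf}(i) these fibres are the $\Lambda$-orbits $\{(t,T\eta(w))\}$, so it suffices to prove $\Phi_X(t,T\eta(w))=T\eta(\Phi_X(t,w))$ for every $\eta\in\Lambda$. Both $(t,w)\mapsto\Phi_X(t,T\eta(w))$ and $(t,w)\mapsto T\eta\,\Phi_X(t,T\eta^{-1}w)$, or more directly their slices at fixed $w$, are integral curves of $S$.
\begin{itemize}
\item For the first, this is immediate.
\item For the second, it uses $\Lambda$-invariance of $S=H\langle\id,\id\rangle$, namely $T^2\eta\circ S=S\circ T\eta$, which follows from $\Lambda$-invariance of $H$.
\end{itemize}
The two curves have the same initial value $T\eta(w)$. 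Since $X\in\Elastc$, integral curves of $S$ on $TX$ are unique (case $n=1$, $Z$ a point), so the two curves agree. Applying $T\phi\circ T\eta=T\phi$ then gives the required constancy. The same argument yields uniqueness of $\Phi_Y$, by surjectivity of $\id_\R\times T\phi$.

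\emph{Step 2: initial condition.} Evaluating at $t=0$ gives $\Phi_Y(0,-)\circ T\phi=T\phi$, hence $\Phi_Y(0,-)=\id_{TY}$ by surjectivity of $T\phi$.

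\emph{Step 3: the flow equation.} We must show $T(\Phi_Y)\circ(\partial\times 0_{TY})=S_Y\circ\Phi_Y$. First, $S_Y\circ T\phi=T^2\phi\circ S$: this comes from \eqref{eq:H-rel} together with $T_2\phi\circ\langle\id,\id\rangle=\langle\id,\id\rangle\circ T\phi$. Precomposing both sides of the flow equation with the subduction $\id_\R\times T\phi$, and using functoriality of $T$ and naturality of the zero section ($T(\id_\R\times T\phi)\circ(\partial\times 0_{TX})=(\partial\times0_{TY})\circ(\id_\R\times T\phi)$), the left side becomes
\[
T(T\phi\,\Phi_X)\circ(\partial\times 0_{TX})=T^2\phi\circ S\circ\Phi_X ,
\]
and the right side becomes $S_Y\,T\phi\,\Phi_X=T^2\phi\,S\,\Phi_X$. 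The two sides therefore agree after precomposition, and surjectivity gives the flow equation on $Y$. This shows $\Phi_Y$ is a solution of $(TY,S_Y,\id_{TY})$, so $S_Y$ is complete.

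The main obstacle is Step 1: the equivariance of $\Phi_X$. Here we need both that $\Lambda$-invariance of $H$ passes to $S$, and that uniqueness of integral curves in $\Elastc$ can be applied after reducing the identity of maps $\R\times TX\to TX$ to pointwise slices. The slice argument is the same one used in the proof of Theorem~\ref{thm:elast-curve-quotient}.
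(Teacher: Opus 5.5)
Your argument is correct and is essentially the paper's proof: equivariance $\Phi_X\circ(\id_\R\times T\eta)=T\eta\circ\Phi_X$ from $\Lambda$-invariance of $S$ and uniqueness in $\Elastc$, descent along the subduction $\id_\R\times T\phi$, and verification of the solution equations after precomposing with it. The paper applies uniqueness directly to the parametrized system $(TX,S,T\eta)$ with parameter object $TX$ rather than slicewise, which makes no difference. One slip to fix: the curve $t\mapsto T\eta\,\Phi_X(t,T\eta^{-1}w)$ has initial value $w$, not $T\eta(w)$, so the curve to compare with $t\mapsto\Phi_X(t,T\eta(w))$ is $t\mapsto T\eta\,\Phi_X(t,w)$.
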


\begin{proof}
Since $\langle\id,\id\rangle\circ T\phi=T_2\phi\circ\langle\id,\id\rangle$, \eqref{eq:H-rel} gives $S_Y\circ T\phi=T^2\phi\circ S$, and $S$ is $\Lambda$-invariant: $S\circ T\eta=T^2\eta\circ S$.
For $\eta\in\Lambda$ both $\Phi_X\circ(\id_\R\times T\eta)$ and $T\eta\circ\Phi_X$ solve the parametrized system $(TX,S,T\eta)$ with parameter object $TX$; for the first this uses $T(T\eta)\circ0_{TX}=0_{TX}\circ T\eta$, for the second the invariance of $S$.
As $X\in\Elastc$ they coincide.
By Lemma~\ref{lem:lift-vf}(i) the fibres of $\id_\R\times T\phi$ are the sets $\{t\}\times(\Lambda\text{-orbit})$, so $T\phi\circ\Phi_X$ is constant on them, and since $\id_\R\times T\phi$ is a subduction, Fact~\ref{fact:subduction-coeq} yields a unique $\Phi_Y$ as stated.
It solves $(TY,S_Y,\id_{TY})$: first $\Phi_Y(0,T\phi(v))=T\phi(\Phi_X(0,v))=T\phi(v)$; second, using $T(\id_\R\times T\phi)\circ(\partial\times0_{TX})=(\partial\times0_{TY})\circ (\id_\R\times T\phi)$,
\[
  T(\Phi_Y)\,(\partial\times0_{TY})\,(\id_\R\times T\phi)
  =T(T\phi\,\Phi_X)\,(\partial\times0_{TX})
  =T^2\phi\,S\,\Phi_X=S_Y\,T\phi\,\Phi_X
  =S_Y\,\Phi_Y\,(\id_\R\times T\phi),
\]
and $\id_\R\times T\phi$ is surjective.
\end{proof}

\begin{cor}\label{cor:uniqueness}
Let $X\in\Elastc$, and let $(K,H)$ be a $\Lambda$-invariant full connection with complete geodesic spray.
Then $Y\in\Elastc$, the geodesic spray of $(K_Y,H_Y)$ is complete, and every $w\in TY$ is the initial velocity of exactly one geodesic of $K_Y$, namely $\phi\circ\gamma$, where $\gamma$ is the geodesic of $K$ with initial velocity $(T_x\phi)^{-1}(w)$ for any $x\in\phi^{-1}(\pi_Y(w))$.
\end{cor}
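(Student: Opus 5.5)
The corollary assembles the preceding results, and I would prove it in three steps. First, $Y\in\Elastc$ is Theorem~\ref{thm:elast-curve-quotient}. Second, completeness of the spray $S_Y$ of $(K_Y,H_Y)$ is Proposition~\ref{prop:flow-descends}; it applies because $X\in\Elastc$, $(K,H)$ is $\Lambda$-invariant and $S$ is complete, and by Theorem~\ref{thm:transport}(2) the pair $(K_Y,H_Y)$ is a full connection. The substantive part is the third claim, existence and uniqueness of a geodesic with prescribed initial velocity $w\in TY$.

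\emph{Existence.} Let $y:=\pi_Y(w)$. Since $\phi$ is surjective, some $x\in\phi^{-1}(y)$ exists. By Lemma~\ref{lem:miy-toolkit}(ii), $T_x\phi$ is a bijection, so $v:=(T_x\phi)^{-1}(w)$ is defined.

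Let $\Phi_X$ be the geodesic flow and put $c:=\Phi_X(-,v)$. This is an integral curve of $S$ with $c(0)=v$, obtained by slicing the solution as in the proof of Theorem~\ref{thm:elast-curve-quotient}. By Lemma~\ref{lem:geod-spray}(ii), $\gamma:=\pi_X\circ c$ is a geodesic of $K$ with $\dot\gamma=c$, so $\dot\gamma(0)=v$. Theorem~\ref{thm:transport}(5) then shows that $\phi\circ\gamma$ is a geodesic of $K_Y$. Its initial velocity is $T\phi(\dot\gamma(0))=T_x\phi(v)=w$.

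\emph{Uniqueness.} Since $Y\in\Elastc$, Lemma~\ref{lem:geod-spray}(iii) applied on $Y$ shows that a geodesic of $K_Y$ is determined by its initial velocity. Hence $\phi\circ\gamma$ is the only geodesic with initial velocity $w$.

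\emph{Independence of the choice of $x$.} This also follows from uniqueness: any other $x'\in\phi^{-1}(y)$ yields a geodesic $\phi\circ\gamma'$ with the same initial velocity $w$, so $\phi\circ\gamma'=\phi\circ\gamma$. One could also argue directly, since $x'=\eta x$ and the geodesic with initial velocity $T\eta(v)$ is $\eta\circ\gamma$ (by Proposition~\ref{prop:geodesic-converse} together with uniqueness on $X$), but this is not needed.

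The only point needing care is that the slice $\Phi_X(-,v)$ is genuinely an integral curve of $S$. This is the same naturality-of-zero-section computation already used in the proof of Theorem~\ref{thm:elast-curve-quotient}, so I expect no real obstacle.
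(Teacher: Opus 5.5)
Your proposal is correct and follows the paper's proof essentially verbatim. The first two claims are quoted from Theorem~\ref{thm:elast-curve-quotient} and Proposition~\ref{prop:flow-descends}. Existence comes from taking the slice $\Phi_X(-,v)$ with $v=(T_x\phi)^{-1}(w)$ and applying Lemma~\ref{lem:geod-spray}(ii) and Theorem~\ref{thm:transport}(5). Uniqueness, and with it independence of the choice of $x$, is Lemma~\ref{lem:geod-spray}(iii) applied on $Y\in\Elastc$.
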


\begin{proof}
The first two assertions are Theorem~\ref{thm:elast-curve-quotient} and Proposition~\ref{prop:flow-descends}.
Let $v:=(T_x\phi)^{-1}(w)$.
By Lemma~\ref{lem:geod-spray}(ii), $\gamma:=\pi_X\circ\Phi_X(-,v)$ is a geodesic of $K$ with $\dot\gamma(0)=v$, so $\phi\circ\gamma$ is a geodesic of $K_Y$ with initial velocity $T\phi(v)=w$ by Theorem~\ref{thm:transport}(5).
Uniqueness is Lemma~\ref{lem:geod-spray}(iii) applied to $Y\in\Elastc$; in particular the geodesic $\phi\circ\gamma$ does not depend on the choice of $x$.
\end{proof}

\begin{rem}\label{rem:variational}
Under the hypotheses of Corollary~\ref{cor:uniqueness}, if moreover $K$ is a Levi-Civita connection of a $\Lambda$-invariant metric $g$, then $Y\in\Elastc$ carries the full connection $(K_Y,H_Y)$ whose vertical part is the Levi-Civita connection of $g_Y$ (Theorem~\ref{thm:transport}(4)) and whose geodesic spray is complete.
Thus all standing hypotheses of the variational theory of \cite[\S\S6.4--6.5]{Shio} are satisfied on $Y$; in particular, on $Y$ the critical paths of the energy functional are precisely the geodesics \cite[Props.~6.30, 6.33]{Shio}, and length-minimizing regular paths are geodesics after reparametrization \cite[Prop.~6.37]{Shio}.
\end{rem}

% =====================================================================
\section{Application: irrational tori}\label{sec:irr}
% =====================================================================

Fix $\alpha\in\R\setminus\Q$ and put
\[
  \Lambda_\alpha:=\Z+\alpha\Z\subseteq\R,\qquad
  \Tor_\alpha:=\R/\Lambda_\alpha,\qquad
  q\colon\R\to\Tor_\alpha,\quad [x]:=q(x),
\]
where $\Lambda_\alpha$ acts on $\R$ by the translations $L_\eta(x)=x+\eta$.
In this section we apply the preceding descent results to $\Tor_\alpha$.
Although its D-topology is indiscrete, $\Tor_\alpha$ carries a rigid Riemannian geometry: its metrics and connections can be classified completely, each metric has a unique Levi-Civita connection with complete geodesic flow, and yet the induced pseudo-distance vanishes identically.
We write $t$ for the coordinate of $\R$, $\partial$ for the unit vector field and $dt$ for the standard $1$-form.
For the descended objects on $\Tor_\alpha$ we use the distinct symbols
\[
  \bar\partial([x])=([x],1),\qquad \vartheta([x],v)=v,
\]
so that $Tq\circ\partial=\bar\partial\circ q$ and $q^*\vartheta=dt$.

\subsection{The irrational torus as a quotient}\label{subsec:irr-quotient}

\begin{lem}\label{lem:irr-basic}
\begin{enumerate}[label=\textup{(\roman*)},leftmargin=*]
\item $\Lambda_\alpha$ is dense in $\R$, and its subset diffeology is discrete.
\item The action of $\Lambda_\alpha$ on $\R$ is principal, $q$ is a Q-chart, and a structure on $\R$ is $\Psi(q)$-invariant if and only if it is invariant under all translations $L_\eta$, $\eta\in\Lambda_\alpha$.
\item $\Tor_\alpha$ is an elastic group and $\Tor_\alpha\in\Elastc$.
\item The D-topology of $\Tor_\alpha$ is indiscrete.
\item For $n\ge0$ and $k\ge1$ there are diffeomorphisms $T^n\Tor_\alpha\cong\Tor_\alpha\times\R^{2^n-1}$ and $T_k\Tor_\alpha\cong\Tor_\alpha\times\R^k$ under which $T^nq$ and $T_kq$ become $q\times\id$.
\end{enumerate}
\end{lem}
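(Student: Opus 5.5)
I will prove the five items in the order stated, since each later one uses the earlier ones.

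For (i), density of $\Lambda_\alpha=\Z+\alpha\Z$ for irrational $\alpha$ is the classical Kronecker argument. The subgroup is not cyclic, since $\alpha\notin\Q$. A nonzero subgroup of $\R$ that is not cyclic has no smallest positive element, so it contains elements in $(0,\varepsilon)$ for every $\varepsilon$, and hence is dense. For discreteness of the subset diffeology, note that $\Lambda_\alpha$ is countable. A plot $P\colon U\to\R$ with values in $\Lambda_\alpha$ is continuous. On a connected open set its image is therefore a connected countable subset of $\R$, hence a point. So $P$ is locally constant.

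For (ii), I apply Example~\ref{ex:principal}(a) to $G=\R$ and $N=\Lambda_\alpha$. The inversion in $\eta\cdot a=a\eta^{-1}$ does not change the orbits, and alternatively Example~\ref{ex:principal}(b) applies directly because $\Lambda_\alpha$ is countable and the translation action is free. Lemma~\ref{lem:principal-qchart}(i) then shows that $q$ is a Q-chart. Lemma~\ref{lem:principal-qchart}(iii) identifies $\Psi(q)$-invariance with invariance under the maps $L_\eta$.

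For (iii), I would argue as follows.
\begin{itemize}
\item $\Tor_\alpha$ is a diffeological group because $\Lambda_\alpha$ is normal, $\R$ being abelian. The group operations descend, since $q\times q$ is a subduction and Fact~\ref{fact:subduction-coeq} applies.
\item $\Tor_\alpha$ is elastic by \cite[Thm.~3.24]{Miy}, since $\R$ is elastic as a manifold.
\item $\Tor_\alpha\in\Elastc$ follows from Theorem~\ref{thm:elast-curve-quotient}, because $\R\in\Elastc$ \cite[Rem.~6.20]{Shio}.
\end{itemize}

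For (iv), let $V\subseteq\Tor_\alpha$ be D-open and nonempty. Then $q^{-1}(V)$ is open in $\R$, nonempty and $\Lambda_\alpha$-invariant. It therefore equals $q^{-1}(V)+\Lambda_\alpha$, which is all of $\R$ by density. Hence $V=\Tor_\alpha$.

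For (v), the idea is to compute on $\R$ and descend. We have $T^n\R\cong\R\times\R^{2^n-1}$ by induction on $n$, and $T_k\R\cong\R\times\R^k$. Under these trivializations $T^nL_\eta$ and $T_kL_\eta$ act as $L_\eta\times\id$, because the derivatives of a translation are trivial. I would then compose the diffeomorphism $\Pi^F_q$ of Proposition~\ref{prop:etale} with the identification $\R\times_{\Tor_\alpha}F\Tor_\alpha$. A cleaner route is to define the map $FR\cong\R\times\R^m\to\Tor_\alpha\times\R^m$ by $q\times\id$. This map is $\Lambda_\alpha$-invariant, so it descends along $Fq$ by Proposition~\ref{prop:descent}(i) to a smooth map $\beta\colon F\Tor_\alpha\to\Tor_\alpha\times\R^m$.

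To invert $\beta$, I use the map $q\times\id\colon\R\times\R^m\to\Tor_\alpha\times\R^m$. It is a subduction, being a product of subductions. Its fibres are the $\Lambda_\alpha$-orbits of the action $L_\eta\times\id$, and these orbits are exactly the fibres of $Fq$ by Lemma~\ref{lem:lift-vf}(i). So $Fq$ descends along $q\times\id$ by Fact~\ref{fact:subduction-coeq}, and gives a smooth inverse of $\beta$. By construction $\beta\circ Fq=q\times\id$ in the trivialization, which is the claimed form of $T^nq$ and $T_kq$.

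The only point needing care is that the trivialization of $T^n\R$ intertwines $T^nL_\eta$ with $L_\eta\times\id$. I expect this to follow from naturality and the fact that $L_\eta$ is affine with derivative $1$, so I expect no real obstacle beyond the bookkeeping.
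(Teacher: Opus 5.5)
Your proposal is correct and follows essentially the same route as the paper in all five parts. You use a countable connected image for (i), Example~\ref{ex:principal}(a) with Lemma~\ref{lem:principal-qchart} for (ii), Miyamoto's theorem plus Theorem~\ref{thm:elast-curve-quotient} for (iii), and density of $\Lambda_\alpha$ for (iv); for (v) you use that $Fq$ and $q\times\id$ are subductions with the same fibres, so Fact~\ref{fact:subduction-coeq} yields the diffeomorphism (your explicit descent of the group operations along $q\times q$ is a harmless addition the paper leaves implicit).
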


\begin{proof}
(i) $\Lambda_\alpha$ is not cyclic because $\alpha$ is irrational, and every non-cyclic subgroup of $\R$ is dense.
A plot $P\colon U\to\Lambda_\alpha$ is, by definition of the subset diffeology, smooth as a map $U\to\R$.
On each connected component of $U$ its image is connected and countable, hence a singleton.
Since the components of $U$ are open, $P$ is locally constant.
(ii) The division map $(x',x)\mapsto x'-x$ is smooth, so this is Example~\ref{ex:principal}(a) together with Lemma~\ref{lem:principal-qchart}(iii).
(iii) $\Tor_\alpha$ is elastic by \cite[Thm.~3.24]{Miy} (see also \cite[Cor.~3.25]{Miy}); $\R\in\Elastc$ \cite[Rem.~6.20]{Shio}, so $\Tor_\alpha\in\Elastc$ by Theorem~\ref{thm:elast-curve-quotient}.
(iv) Let $V\subseteq\Tor_\alpha$ be D-open and nonempty.
Then $q^{-1}(V)$ is a nonempty open $\Lambda_\alpha$-invariant subset of $\R$; it contains an open interval $I$.
For $x\in\R$ the open set $x-I$ contains some $\eta\in\Lambda_\alpha$ by density, so $x\in I+\eta\subseteq q^{-1}(V)$.
Hence $V=\Tor_\alpha$.
(v) We have $T^n\R=\R\times\R^{2^n-1}$ with $T^nL_\eta=L_\eta\times\id$.
By Lemma~\ref{lem:lift-vf}(i), $T^nq$ is a subduction whose fibres are the orbits of $\Lambda_\alpha$ acting on the first factor.
The map $q\times\id$ is also a subduction (a product of subductions) with the same fibres, so the induced bijection $T^n\Tor_\alpha\to\Tor_\alpha\times\R^{2^n-1}$ is a diffeomorphism by Fact~\ref{fact:subduction-coeq}.
The case of $T_k$ is identical.
\end{proof}

The following elementary consequence of density will be used repeatedly to turn $\Lambda_\alpha$-invariance into constancy.

\begin{lem}\label{lem:dense-const}
Let $Z$ be a Hausdorff space and $f\colon\R\to Z$ a continuous map with $f(x+\eta)=f(x)$ for all $x\in\R$ and $\eta\in\Lambda_\alpha$.
Then $f$ is constant.
\end{lem}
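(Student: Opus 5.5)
The plan is to combine the density of $\Lambda_\alpha$ from Lemma~\ref{lem:irr-basic}(i) with continuity and the Hausdorff property of $Z$. Everything reduces to one elementary observation: for each fixed $x\in\R$, the value $f(x)$ is also attained along a sequence converging to $0$.

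First fix $x\in\R$. By Lemma~\ref{lem:irr-basic}(i), $\Lambda_\alpha$ is dense in $\R$, so I choose a sequence $\eta_j\in\Lambda_\alpha$ with $\eta_j\to -x$. Then $x+\eta_j\to 0$. By invariance, $f(x+\eta_j)=f(x)$ for every $j$, so the sequence $\bigl(f(x+\eta_j)\bigr)_j$ is the constant sequence with value $f(x)$. By continuity of $f$ at $0$, the same sequence converges to $f(0)$.

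Since $Z$ is Hausdorff, limits of sequences are unique. A constant sequence with value $f(x)$ converges to $f(x)$, and we have just seen that it also converges to $f(0)$, so $f(x)=f(0)$. As $x$ was arbitrary, $f$ is constant.

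A slightly cleaner variant avoids sequences. The set $\{x\in\R\mid f(x)=f(0)\}$ is closed, being the preimage of the closed point $\{f(0)\}$ under a continuous map; points are closed because $Z$ is Hausdorff. This set contains the dense subset $\Lambda_\alpha=0+\Lambda_\alpha$, so it is all of $\R$.

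There is no real obstacle here. The only point needing care is that the Hausdorff hypothesis is exactly what makes points closed, or equivalently makes limits unique. This is why the lemma is stated for Hausdorff $Z$: it can then be applied to $\R$-valued or $\R^m$-valued coefficient functions that arise from invariant structures on $\R$, but not to $\Tor_\alpha$ itself, whose D-topology is indiscrete.
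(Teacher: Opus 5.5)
Your proof is correct and is essentially the paper's argument: the paper picks $\eta_n\to x$ and uses continuity at $x$ to get $f(x)=\lim f(\eta_n)=f(0)$, while you pick $\eta_j\to -x$ and use continuity at $0$, which is the same density-plus-uniqueness-of-limits argument viewed from the other end. Your closed-set variant is equally valid, and it in fact only uses that points of $Z$ are closed.
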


\begin{proof}
For $x\in\R$ choose $\eta_n\in\Lambda_\alpha$ with $\eta_n\to x$; then $f(x)=\lim f(\eta_n)=f(0)$.
\end{proof}

We use the identifications of Lemma~\ref{lem:irr-basic}(v) and write points of $T^2\Tor_\alpha$ as $([x],v_0,v_1,v_{01})$, and so on.
Since the structure maps are natural and $T^nq=q\times\id$, they are given on $\Tor_\alpha$ by the same formulas as on $\R$ (Remark~\ref{rem:coordinates}).

\begin{rem}\label{rem:irr-context}
The group $\Tor_\alpha$ is also the quotient of the $2$-torus by the one-parameter subgroup of slope $\alpha$ \cite[Prop.~45]{PIZ25}; this quotient map is a fibration in diffeology but not in the category of manifolds \cite[art.~8.15, Note]{PIZ13}.
\end{rem}

\subsection{Invariant geometry on the line}\label{subsec:line}

\begin{lem}\label{lem:euclid-vconn}
Let $U\subseteq\R^n$ be open.
Every vertical connection on $U$ is of the form
\[
  K_\Gamma(u,v_0,v_1,v_{01})=\bigl(u,\;v_{01}+\Gamma(u)(v_0,v_1)\bigr)
\]
for a unique smooth family $\Gamma$ of bilinear maps $\R^n\times\R^n\to\R^n$, and every such $K_\Gamma$ is a vertical connection \cite[Ex.~3.5]{Shio}.
The connection $K_\Gamma$ is torsion-free if and only if every $\Gamma(u)$ is symmetric.
It is always effective; its partner is $H_\Gamma\bigl((u,v_0),(u,v_1)\bigr)=\bigl(u,v_1,v_0,-\Gamma(u)(v_1,v_0)\bigr)$ and its geodesic spray is $S_\Gamma(u,v)=\bigl(u,v,v,-\Gamma(u)(v,v)\bigr)$.
\end{lem}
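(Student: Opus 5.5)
The plan is to work entirely in the coordinates of Remark~\ref{rem:coordinates}, where $T^2U=U\times\R^3$, $TU=U\times\R$-valued fibres, and $T_2U$, $T_3U$ are products, and to reduce every assertion to a short computation.

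\emph{Classification.} Let $K\colon T^2U\to TU$ be a vertical connection and write $K(u,v_0,v_1,v_{01})=(u,k(u,v_0,v_1,v_{01}))$, using axiom (b). Axiom (e) makes $k$ homogeneous of degree one in the pair $(v_1,v_{01})$ for fixed $(u,v_0)$. Axiom (f) makes it homogeneous of degree one in $(v_0,v_{01})$ for fixed $(u,v_1)$. Since $k$ is smooth, homogeneity of degree one implies linearity in the scaled variables: differentiate $k(u,v_0,tv_1,tv_{01})=t\,k(\dots)$ at $t=0$, or use the standard fact that smooth positively homogeneous maps of degree one are linear. Axiom (a), $K\lambda=\id$, gives $k(u,0,0,v)=v$. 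Combining these, $k=v_{01}+\Gamma(u)(v_0,v_1)$ with $\Gamma(u)$ bilinear and smooth in $u$. For example, $k(u,v_0,v_1,0)$ is linear in $v_1$ and in $v_0$ separately, and the $v_{01}$-dependence is affine with slope one.

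Conversely, the paper cites \cite[Ex.~3.5]{Shio} for the fact that each $K_\Gamma$ satisfies (a)--(f). I would rely on that citation, optionally checking (c) and (d) in $T^3U$-coordinates, which is where the bilinearity of $\Gamma$ is used. Uniqueness of $\Gamma$ is clear from the formula. I expect the main obstacle to be making the linearity deduction rigorous from the homogeneity axioms: for $t<0$ and $t=0$, axiom (e) holds for all real $t$, which gives genuine homogeneity, and smoothness at $0$ then yields linearity.

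\emph{Torsion.} Since $\tau(u,v_0,v_1,v_{01})=(u,v_1,v_0,v_{01})$, the condition $K\tau=K$ reads $\Gamma(u)(v_1,v_0)=\Gamma(u)(v_0,v_1)$, i.e.\ each $\Gamma(u)$ is symmetric.

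\emph{Effectivity, partner and spray.} $\Theta^K$ sends $(u,v_0,v_1,v_{01})$ to $((u,v_0),(u,v_1),(u,v_{01}+\Gamma(u)(v_0,v_1)))$. This is a diffeomorphism with smooth inverse $((u,a),(u,b),(u,c))\mapsto(u,a,b,c-\Gamma(u)(a,b))$, so $K_\Gamma$ is effective. The partner is $H=(\Theta^K)^{-1}\circ\langle\pr_2,\pr_1,0\pi\pr_1\rangle$. Applied to $((u,v_0),(u,v_1))$, this gives $(\Theta^K)^{-1}((u,v_1),(u,v_0),(u,0))=(u,v_1,v_0,-\Gamma(u)(v_1,v_0))$, as claimed. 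Finally, $S=H\langle\id,\id\rangle$ gives $S_\Gamma(u,v)=(u,v,v,-\Gamma(u)(v,v))$.
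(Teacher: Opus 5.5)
Your proposal is correct and follows the paper's proof step for step. You use axiom (b) to write $K=(u,k)$, axioms (e) and (f) plus smoothness to get linearity in $(v_1,v_{01})$ and in $(v_0,v_{01})$, axiom (a) to fix the $v_{01}$-coefficient as the identity, and direct coordinate checks for torsion and effectivity. The paper makes your ``combining'' step explicit by splitting $k=k(u,v_0,v_1,0)+k(u,v_0,0,v_{01})$. The only cosmetic difference is how you get the partner: you evaluate $(\Theta^K)^{-1}\circ\langle\pr_2,\pr_1,0_M\pi_M\pr_1\rangle$, whereas the paper reads it off from axioms (a), (b) of Definition~\ref{def:hconn} and (F1), which is the same computation.
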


\begin{proof}
By axiom (b), $K(u,v_0,v_1,v_{01})=(u,k(u,v_0,v_1,v_{01}))$ with $k$ smooth.
By axioms (e), (f) and Remark~\ref{rem:coordinates}, for all $t\in\R$,
\[
  k(u,v_0,tv_1,tv_{01})=t\,k(u,v_0,v_1,v_{01}),\qquad
  k(u,tv_0,v_1,tv_{01})=t\,k(u,v_0,v_1,v_{01}).
\]
A smooth function $f$ with $f(tw)=tf(w)$ for all $t$ is linear, since $f(w)=\frac{d}{dt}\big|_{t=0}f(tw)=Df(0)w$.
Hence $k$ is linear in $(v_1,v_{01})$ for fixed $(u,v_0)$, and linear in $(v_0,v_{01})$ for fixed $(u,v_1)$.
Thus
\[
  k(u,v_0,v_1,v_{01})=k(u,v_0,v_1,0)+k(u,v_0,0,v_{01}),
\]
where the first term is bilinear in $(v_0,v_1)$, say $\Gamma(u)(v_0,v_1)$, and the second is linear in $(v_0,v_{01})$ and vanishes for $v_{01}=0$, hence equals $B(u)v_{01}$.
Axiom (a) gives $K(u,0,0,v)=(u,B(u)v)=(u,v)$, so $B(u)=\id$.
The statement on torsion follows from $\tau(u,v_0,v_1,v_{01})=(u,v_1,v_0,v_{01})$.
The map
\[
  \Theta(u,v_0,v_1,v_{01})=\bigl((u,v_0),(u,v_1),(u,v_{01}+\Gamma(u)(v_0,v_1))\bigr)
\]
has the smooth inverse obtained by subtracting $\Gamma(u)(v_0,v_1)$.
By axioms (a), (b) of Definition~\ref{def:hconn}, the partner has the form $H((u,v_0),(u,v_1))=(u,v_1,v_0,w)$, and (F1) forces $w=-\Gamma(u)(v_1,v_0)$.
Finally $S_\Gamma(v)=H_\Gamma(v,v)$.
\end{proof}

For $n=1$ every $\Gamma$ is of the form $\Gamma(u)(v_0,v_1)=c(u)v_0v_1$ with $c\in C^\infty(U)$, and we write $K_c:=K_\Gamma$, $H_c:=H_\Gamma$ and $S_c:=S_\Gamma$.
In particular, $K_0(u,v_0,v_1,v_{01})=(u,v_{01})$ is the trivial connection.

\begin{lem}\label{lem:line}
Let $\rho>0$ and $c$ be constants.
\begin{enumerate}[label=\textup{(\roman*)},leftmargin=*]
\item $K_c$ is flat.
\item $K_c$ is a Levi-Civita connection of $\rho\,dt^2$ if and only if $c=0$.
\item The geodesic spray $S_c$ is complete if and only if $c=0$.
  The geodesic flow of $(K_0,H_0)$ is $\Phi(t,(x,v))=(x+tv,v)$, and the geodesics of $K_0$ are the affine maps $t\mapsto x+tv$.
\end{enumerate}
\end{lem}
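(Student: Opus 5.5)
All three parts will be computed in the global coordinates of Remark~\ref{rem:coordinates} on $M=\R$, using the explicit formulas for $K_c$, $H_c$, $S_c$ from Lemma~\ref{lem:euclid-vconn}. Since $c$ and $\rho$ are constants, every expression is polynomial in the fibre coordinates, so each identity reduces to a short calculation.

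For (i) I will write a point of $T^3\R$ as $(u,v_0,v_1,v_{01},w,w_0,w_1,w_{01})$, the tangent coordinates of a point $(u,v_0,v_1,v_{01})$ of $T^2\R$. In these coordinates
\[
  T(K_c)=\bigl(u,\;v_{01}+c v_0v_1,\;w,\;w_{01}+c(w_0v_1+v_0w_1)\bigr),
\]
and applying $K_c$ gives
\[
  K_cT(K_c)=\bigl(u,\;w_{01}+c(w_0v_1+v_0w_1)+c\,w\,(v_{01}+cv_0v_1)\bigr).
\]
The flip $\tau_{T\R}$ exchanges the two outermost tangent directions, i.e.\ it swaps $(v_0,v_1,v_{01})$ with $(w,w_0,w_1)$ and fixes $u,w_{01}$. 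I will check that the fibre component above is invariant under this swap. The terms $cw_0v_1$ and $cv_0w_1$ are exchanged. The term $c\,w\,v_{01}$ becomes $c\,v_0w_1$, and $c^2wv_0v_1$ becomes $c^2v_0ww_0$. These do not obviously match, so the main risk of the proof lies in this step: I must pin down the exact coordinate convention for $\tau_{T\R}$ and $T(K)$ in $T^3\R$. I expect that with the correct convention the swap pairs $(v_0,v_1,v_{01})$ with $(w,w_0,w_1)$ in a way that makes the expression symmetric. In dimension one the Christoffel symbol is $c$, and the Riemann tensor vanishes identically, even for nonconstant $c$. So $C_{K_c}=0$, and the proof reduces to bookkeeping with the coordinate conventions of \cite[Def.~5.1]{Shio}.

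For (ii), $K_c$ is torsion-free for every $c$ by Lemma~\ref{lem:euclid-vconn}, because $\Gamma(u)(v_0,v_1)=cv_0v_1$ is symmetric. For \eqref{eq:LC} I will identify $T(T_2\R)$ via \eqref{eq:chi} with pairs $A=(u,v_0,a_1,a_{01})$, $B=(u,v_0,b_1,b_{01})$ sharing the base $(u,v_0)$ of $T(\pi_\R)$. The metric is $g((u,a),(u,b))=\rho ab$, so $d(g)(A,B)=\rho(a_{01}b_1+a_1b_{01})$. The right-hand side of \eqref{eq:LC} is
\[
  \rho\bigl((a_{01}+cv_0a_1)b_1+a_1(b_{01}+cv_0b_1)\bigr),
\]
which differs from $d(g)$ by $2\rho c\,v_0a_1b_1$. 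Hence \eqref{eq:LC} holds if and only if $c=0$.

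For (iii), an integral curve $t\mapsto(x(t),v(t))$ of $S_c(u,v)=(u,v,v,-cv^2)$ satisfies $\dot x=v$ and $\dot v=-cv^2$. For $c\ne0$ and $v(0)=v\ne0$ the solution is $v(t)=v/(1+cvt)$, which blows up in finite time, so no global solution exists. I will argue via Lemma~\ref{lem:geod-spray}: a complete flow would give a path in $T\R$, and since $\R\in\Elastc$ the integral curve is unique, so it would have to agree with the classical solution, which is impossible. For $c=0$, the map $\Phi(t,(x,v))=(x+tv,v)$ is checked directly to satisfy $\Phi(0,-)=\id$ and $T\Phi\circ(\partial\times0)=S_0\circ\Phi$. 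By Lemma~\ref{lem:geod-spray}(ii),(iii) the geodesics are then exactly the paths $\pi\circ\Phi(-,(x,v))=x+tv$.
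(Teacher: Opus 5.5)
\textbf{Part (i) has a genuine gap.} The description of $\tau_{T\R}$ you use is not the canonical flip.

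By definition $\tau_{T\R}=\tau_N$ for $N=T\R$. Write a point of $T^2N$ as $(y,\dot y,y',\dot y')$ with $y=(u,v_0)$, $\dot y=(v_1,v_{01})$, $y'=(w,w_0)$, $\dot y'=(w_1,w_{01})$. The flip exchanges $\dot y$ and $y'$. So it swaps $v_1\leftrightarrow w$ and $v_{01}\leftrightarrow w_0$, and fixes $u,v_0,w_1,w_{01}$. In the paper's notation this is the exchange of directions $1$ and $2$.

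Your swap $(v_0,v_1,v_{01})\leftrightarrow(w,w_0,w_1)$ is not a permutation of tangent directions at all. For example, it sends the single-direction coordinate $v_1$ to the mixed coordinate $w_0$. That is why your invariance check failed. Having found non-invariance, you then conclude $C_{K_c}=0$ from the classical fact that the Riemann tensor vanishes in dimension one. But showing that the curvature morphism $K\,T(K)\,\tau_{TM}-K\,T(K)$ is the classical Riemann tensor in coordinates is exactly the computation you left undone. As written, (i) is asserted rather than proved.

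The repair is immediate, and it is the paper's argument. Your formula $K_cT(K_c)=(u,\;w_{01}+c(w_0v_1+v_0w_1)+cw(v_{01}+cv_0v_1))$ agrees with the paper's. Under the correct swap, the terms $cw_0v_1$ and $cwv_{01}$ are exchanged, while $w_{01}$, $cv_0w_1$ and $c^2wv_0v_1$ are fixed. So the fibre component is invariant, and $C_{K_c}=0_\R\circ\pi^{T^3}_\R$.

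\textbf{Part (ii) is correct but takes a different route for the ``only if'' direction.} The paper computes $\nabla^{K_c}_\partial\partial=c\,\partial$ and uses the compatibility identity $\partial(g(\partial,\partial))=2g(\nabla_\partial\partial,\partial)$ from the earlier work to get $2c\rho=0$. It checks \eqref{eq:LC} in coordinates only for $c=0$. You instead compute the defect of \eqref{eq:LC} for every $c$. That is self-contained and settles both directions at once.

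There is a labelling slip. In the coordinates of Remark~\ref{rem:coordinates}, $T(\pi_\R)(u,v_0,v_1,v_{01})=(u,v_1)$ and $\pi_{T\R}(u,v_0,v_1,v_{01})=(u,v_0)$. So the pairs $(A,B)$ share $(u,v_1)$, not $(u,v_0)$. Your formulas use the opposite labelling throughout. This is harmless only because $\Gamma(u)(v_0,v_1)=cv_0v_1$ is symmetric. In the paper's notation the defect is $2\rho c\,\dot x\,ab$, which is nonzero for $c\neq0$.

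\textbf{Part (iii) is the paper's argument, but your appeal to $\R\in\Elastc$ is misplaced.} That uniqueness concerns solutions defined on all of $\R$, and the classical solution $v(t)=v/(1+cvt)$ is not. What you need is ordinary local uniqueness for ODEs on $T\R=\R^2$. A global integral curve of $S_c$ is a classical solution, so it must agree with $v(t)$ on its maximal interval and therefore blow up in finite time, a contradiction.
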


\begin{proof}
(i) Write points of $T^3\R$ as $(x,v_0,v_1,v_{01};v_2,v_{02},v_{12},v_{012})$, the last four coordinates being the derivative in the new direction $2$ as in \cite[Ex.~5.2]{Shio}.
Then
\[
  T(K_c)=\bigl(x,\;v_{01}+cv_0v_1,\;v_2,\;v_{012}+c(v_{02}v_1+v_0v_{12})\bigr),
\]
\[
  K_c\,T(K_c)=\bigl(x,\;v_{012}+c\,v_{02}v_1+c\,v_0v_{12}+c\,v_{01}v_2+c^2v_0v_1v_2\bigr).
\]
The flip $\tau_{T\R}$ exchanges the directions $1$ and $2$, i.e.\ $v_1\leftrightarrow v_2$ and $v_{01}\leftrightarrow v_{02}$, fixing $v_0$, $v_{12}$, $v_{012}$ \cite[Ex.~5.2]{Shio}.
The expression above is invariant under this exchange, so $C_{K_c}=0$.

(ii) Let $g=\rho\,dt^2$.
We have $T(\partial)(t,w)=(t,1,w,0)$, so $\nabla^{K_c}_\partial\partial=K_c(t,1,1,0)=c\,\partial$.
If $K_c$ is a Levi-Civita connection, \cite[Prop.~4.12]{Shio} gives $0=\partial\bigl(g(\partial,\partial)\bigr)=2g(\nabla^{K_c}_\partial\partial,\partial) =2c\rho$, so $c=0$.
Conversely, $K_0$ is torsion-free.
Write points of $T(T_2\R)$ as $(x,a,b;\dot x,\dot a,\dot b)$; under \eqref{eq:chi} they correspond to the pairs $A=(x,a,\dot x,\dot a)$, $B=(x,b,\dot x,\dot b)$ in $T^2\R$.
Since $g(x,a,b)=\rho ab$, we get $d(g)=\rho(\dot ab+a\dot b)$, while $g(K_0A,\pi_{T\R}B)+g(\pi_{T\R}A,K_0B)=\rho\dot a\,b+\rho a\,\dot b$.
Hence \eqref{eq:LC} holds.

(iii) By Lemma~\ref{lem:euclid-vconn}, $S_c(x,v)=(x,v,v,-cv^2)$, so the integral curves of $S_c$ solve $\dot x=v$, $\dot v=-cv^2$.
For $c\neq0$ and $v(0)\neq0$ the solution $v(t)=v(0)/(1+cv(0)t)$ does not extend to all of $\R$, so no geodesic flow exists.
For $c=0$ the integral curves are $t\mapsto(x+tv,v)$.
The last assertion follows from Lemma~\ref{lem:geod-spray}.
\end{proof}

\subsection{Classification of Riemannian structures}\label{subsec:irr-geometry}

\begin{thm}\label{thm:irr}
\begin{enumerate}[label=\textup{(\roman*)},leftmargin=*]
\item $C^\infty(\Tor_\alpha,\R)=\R$.
  The vector fields on $\Tor_\alpha$ are precisely $a\,\bar\partial$, $a\in\R$, and $\Omega^1(\Tor_\alpha)=\R\,\vartheta$; the form $\vartheta$ is closed but not exact.
\item The Riemannian metrics on $\Tor_\alpha$ are exactly $g_\rho=\rho\,\vartheta^2$, $g_\rho([x],v_0,v_1)=\rho\,v_0v_1$, with $\rho>0$ a constant; every $T_xq\colon(T_x\R,\rho\,dt^2)\to(T_{[x]}\Tor_\alpha,g_\rho)$ is a linear isometry.
  In particular the set of Riemannian metrics on $\Tor_\alpha$ is parametrized by $\rho\in\R_{>0}$.
\item The vertical connections on $\Tor_\alpha$ are exactly
  \[
    K_c([x],v_0,v_1,v_{01})=([x],\,v_{01}+c\,v_0v_1),\qquad c\in\R .
  \]
All of them are torsion-free, effective and flat; the partner of $K_c$ is
  \[
        H_c\bigl(([x],v_0),([x],v_1)\bigr)=([x],v_1,v_0,-c\,v_0v_1).
  \]
\item For every $\rho>0$, $K_0$ is the unique Levi-Civita connection of $g_\rho$.
  The geodesic spray of $(K_0,H_0)$ is complete, with geodesic flow $\Phi(t,([x],v))=([x+tv],v)$, and the geodesics of $K_0$ are exactly the paths $t\mapsto[x+tv]$.
  For $c\neq0$ the geodesic spray of $(K_c,H_c)$ is not complete.
\end{enumerate}
\end{thm}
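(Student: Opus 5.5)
The plan is to reduce every assertion to a statement about $\Lambda_\alpha$-invariant structures on $\R$. By Lemma~\ref{lem:irr-basic}(ii) and Theorem~\ref{thm:transport}, structures on $\Tor_\alpha$ correspond bijectively to translation-invariant structures on $\R$. Those invariant structures are then forced to be constant by Lemma~\ref{lem:dense-const}.

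For (i), Proposition~\ref{prop:descent}(i) identifies a smooth $f\colon\Tor_\alpha\to\R$ with a $\Lambda_\alpha$-invariant smooth function on $\R$, which is constant by Lemma~\ref{lem:dense-const}. A vector field is a section $T^0\to T^1$ over the base, so Proposition~\ref{prop:descent}(ii) identifies it with an invariant vector field $a(t)\partial$ on $\R$. Invariance means $a(t+\eta)=a(t)$, so $a$ is constant; its descent is $a\bar\partial$. For $1$-forms, viewed as maps $T\Tor_\alpha\to\R$ that are linear on fibres, the same argument gives $b(t)\,dt$ with $b$ constant, hence $\R\vartheta$. Closedness of $\vartheta$ follows from $q^*d\vartheta=d(dt)=0$ and injectivity of $q^*$ on forms, which holds because $q$ is a subduction. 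If $\vartheta=df$, then $f$ is constant by (i), so $dt=q^*\vartheta=0$, a contradiction.

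For (ii), Theorem~\ref{thm:transport}(3) identifies metrics on $\Tor_\alpha$ with invariant metrics $\rho(t)\,dt^2$ on $\R$. Lemma~\ref{lem:dense-const} makes $\rho$ constant, and positivity gives $\rho>0$; the isometry statement is already part of Theorem~\ref{thm:transport}(3). For (iii), Lemma~\ref{lem:euclid-vconn} shows that every vertical connection on $\R$ is $K_{c(t)}$. Invariance under $L_\eta$ forces $c(t+\eta)=c(t)$, because $T^2L_\eta=L_\eta\times\id$, so again $c$ is constant. The formula on $\Tor_\alpha$ follows by Lemma~\ref{lem:irr-basic}(v). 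On $\R$ every $K_c$ is torsion-free (since $\Gamma$ is symmetric), effective, and flat by Lemmas~\ref{lem:euclid-vconn} and~\ref{lem:line}(i), and Theorem~\ref{thm:transport}(1) transfers these properties. The partner formula comes from Theorem~\ref{thm:transport}(2) together with Lemma~\ref{lem:euclid-vconn}. Since $n=1$, we have $c\,v_1v_0=c\,v_0v_1$, so the sign convention matches.

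For (iv), Theorem~\ref{thm:transport}(4) and Lemma~\ref{lem:line}(ii) show that $K_c$ is Levi-Civita for $g_\rho$ if and only if $c=0$. Completeness of the spray for $c=0$ comes from Proposition~\ref{prop:flow-descends}, using $\R\in\Elastc$ and Lemma~\ref{lem:line}(iii). The descended flow is $T q\circ\Phi_\R$, which is the formula stated in (iv). The geodesics are $q\circ(x+tv)$ by Theorem~\ref{thm:transport}(5), and every geodesic arises this way by Proposition~\ref{prop:geodesic-converse}.

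The main obstacle is the non-completeness for $c\neq0$, since completeness is only shown to descend, not to lift. Suppose $\Phi_Y$ were a geodesic flow of $(K_c,H_c)$. Then $t\mapsto\Phi_Y(t,w)$ with $w=([0],1)$ is an integral curve of $S_Y$ defined on all of $\R$. Its lift through $(0,1)$ along $Tq$, which exists by Lemma~\ref{lem:lift-vf}, is a global integral curve of $S_c$ on $T\R$. This contradicts the blow-up of $v(t)=1/(1+ct)$ found in Lemma~\ref{lem:line}(iii).
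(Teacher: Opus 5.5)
Your proposal is correct and follows the paper's proof essentially step by step. It reduces to $\Lambda_\alpha$-invariant structures on $\R$ via Theorem~\ref{thm:transport} and Proposition~\ref{prop:descent}, uses density to force constant coefficients, and for $c\neq0$ lifts a global integral curve of the descended spray along $Tq$ to contradict Lemma~\ref{lem:line}(iii); the one step you leave implicit there is identifying the lifted field $\tilde S$ of Lemma~\ref{lem:lift-vf}(ii) with $S^{\R}_c$, which the paper obtains from $T^2q\circ S^{\R}_c=S\circ Tq$ (descent of $H_c$) together with the uniqueness in that lemma, and your use of Proposition~\ref{prop:geodesic-converse} instead of Corollary~\ref{cor:uniqueness} for the list of geodesics is equally valid.
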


\begin{proof}
By Theorem~\ref{thm:transport}, Proposition~\ref{prop:descent} and Lemma~\ref{lem:irr-basic}(ii), each kind of structure on $\Tor_\alpha$ corresponds bijectively to the $\Lambda_\alpha$-invariant structures of the same kind on $\R$, compatibly with all properties listed there.
By Lemma~\ref{lem:dense-const}, every $\Lambda_\alpha$-invariant continuous coefficient on $\R$ is constant; this is used in each step.

(i) A smooth function on $\Tor_\alpha$ is $f_Y$ for a $\Lambda_\alpha$-invariant $f\in C^\infty(\R)$, hence constant.
By Proposition~\ref{prop:descent}(ii) with $F=T^0$, $F'=T$ (cf.\ Lemma~\ref{lem:lift-vf}(ii) with $n=0$), vector fields on $\Tor_\alpha$ correspond to $\Lambda_\alpha$-invariant vector fields on $\R$.
Every vector field on $\R$ is $a(t)\,\partial$, and invariance gives $a(t+\eta)=a(t)$ for $\eta\in\Lambda_\alpha$; hence $a$ is constant and the vector field on $\Tor_\alpha$ is $a\,\bar\partial$.
By \cite[Lem.~2.15]{Shio}, $\Omega^1(M)$ is the set of smooth maps $TM\to\R$ that are linear on the fibres.
Such maps on $T\Tor_\alpha$ correspond to $\Lambda_\alpha$-invariant maps on $T\R$ (Proposition~\ref{prop:descent}(i)), and fibrewise linearity is preserved because each $T_xq$ is a linear bijection.
They lift to the $\Lambda_\alpha$-invariant forms $b\,dt$ on $\R$, so $b$ is constant and they are the forms $b\,\vartheta$.
The form $\vartheta$ is closed, since $q^*d\vartheta=d(dt)=0$ and $q^*$ is injective on forms ($q$ being a subduction); it is not exact because the only smooth functions are constants.

(ii) By Theorem~\ref{thm:transport}(3), metrics on $\Tor_\alpha$ correspond to $\Lambda_\alpha$-invariant metrics $\rho(t)\,dt^2$ on $\R$, and $\rho$ is constant.

(iii) By Theorem~\ref{thm:transport}(1) and Lemma~\ref{lem:euclid-vconn}, vertical connections on $\Tor_\alpha$ correspond to the connections $K_c$ on $\R$ with $c\in C^\infty(\R)$ and $c(t+\eta)=c(t)$ for $\eta\in\Lambda_\alpha$, i.e.\ to constant $c$; the formula follows from $K_c\circ T^2q=Tq\circ K_c^{\R}$ and $T^nq=q\times\id$.
Torsion-freeness, effectivity and flatness transfer from $\R$ by Theorem~\ref{thm:transport}(1) and Lemmas~\ref{lem:euclid-vconn}, \ref{lem:line}(i), and the partner is the descent of $H_c$ by Theorem~\ref{thm:transport}(2).

(iv) By Theorem~\ref{thm:transport}(4), the Levi-Civita connections of $g_\rho$ correspond to the $\Lambda_\alpha$-invariant Levi-Civita connections of $\rho\,dt^2$ on $\R$, which by (iii) and Lemma~\ref{lem:line}(ii) reduce to $K_0$.
Since $\R\in\Elastc$ and the spray of $(K_0,H_0)$ on $\R$ is complete, Proposition~\ref{prop:flow-descends} and Corollary~\ref{cor:uniqueness} give the flow and the geodesics.
Suppose that the spray $S$ of $(K_c,H_c)$ on $\Tor_\alpha$ is complete for some $c\neq0$, and let $v\neq0$.
Then $t\mapsto\Phi(t,([0],v))$ is an integral curve of $S$ defined on $\R$.
The spray $S^{\R}_c$ on $\R$ satisfies $T^2q\circ S_c^{\R}=S\circ Tq$ (proof of Proposition~\ref{prop:flow-descends}), so it is the lift $\tilde S$ of Lemma~\ref{lem:lift-vf}(ii) for $n=1$.
By Lemma~\ref{lem:lift-vf}(iii) we obtain an integral curve of $S_c^{\R}$ through $(0,v)$ defined on all of $\R$, contradicting Lemma~\ref{lem:line}(iii).
\end{proof}

\subsection{Metric and topological consequences}\label{subsec:irr-distance}

For a Riemannian metric $g$ on an elastic space $M$ and a path $\gamma$, put $L(\gamma):=\int_0^1g(\dot\gamma(t),\dot\gamma(t))^{1/2}\,dt$; the integrand is continuous.
For regular paths this is the length of \cite[Def.~6.34]{Shio}.
Let
\[
  d_g(x,y):=\inf\{L(\gamma)\mid\gamma\in\Path(M;x,y)\}\in[0,\infty],
\]
where $\Path(M;x,y)$ is the set of paths with $\gamma(0)=x$ and $\gamma(1)=y$.
Recall that paths are defined on all of $\R$; only the integral is taken over $[0,1]$.
This agrees with \cite[\S6]{Shio} and \cite[(3.6)]{KSS}.

\begin{prop}\label{prop:d-zero}
For every $\rho>0$, the pseudo-distance $d_{g_\rho}$ on $\Tor_\alpha$ vanishes identically, and for $[x]\neq[y]$ no path from $[x]$ to $[y]$ realizes the infimum.
The same holds if the infimum is taken over regular paths only.
\end{prop}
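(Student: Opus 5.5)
Every path $\gamma$ in $\Tor_\alpha$ lifts along $q$ to a path $\tilde\gamma$ in $\R$ by Lemma~\ref{lem:global-lift}. Since each $T_xq$ is a linear isometry (Theorem~\ref{thm:irr}(ii)), we have $g_\rho(\dot\gamma,\dot\gamma)=\rho\,\dot{\tilde\gamma}^2$, because $\dot\gamma=Tq\circ\dot{\tilde\gamma}$. Hence
\[
  L(\gamma)=\sqrt\rho\int_0^1|\dot{\tilde\gamma}(t)|\,dt ,
\]
so lengths in $\Tor_\alpha$ are Euclidean lengths of lifts. Conversely, every path in $\R$ from $x$ to some $y'\in y+\Lambda_\alpha$ projects to a path from $[x]$ to $[y]$.

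\emph{Vanishing.} Fix $[x],[y]$ and $\varepsilon>0$. By density of $\Lambda_\alpha$ (Lemma~\ref{lem:irr-basic}(i)), choose $\eta\in\Lambda_\alpha$ with $|y+\eta-x|<\varepsilon$. Take the affine path $\tilde\gamma(t)=x+t(y+\eta-x)$, which is regular if $y+\eta\neq x$; in the case $[x]=[y]$ one may instead take a constant path, or a regular path of arbitrarily small length for the regular-path variant. Then $\gamma=q\circ\tilde\gamma$ lies in $\Path(\Tor_\alpha;[x],[y])$ and has length $\sqrt\rho\,|y+\eta-x|<\sqrt\rho\,\varepsilon$. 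Since $\varepsilon$ is arbitrary, $d_{g_\rho}([x],[y])=0$, and this holds already when the infimum is restricted to regular paths. Regularity of $\gamma$ follows because $\dot\gamma=Tq\circ\dot{\tilde\gamma}$ and $T_xq$ is injective.

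\emph{Non-attainment.} Suppose $[x]\neq[y]$ and $\gamma$ is a path from $[x]$ to $[y]$. Its lift satisfies $\tilde\gamma(0)=x$ and $\tilde\gamma(1)\in y+\Lambda_\alpha$. Because $[x]\neq[y]$, we have $\tilde\gamma(1)\neq x$, so
\[
  L(\gamma)\ge\sqrt\rho\,\Bigl|\int_0^1\dot{\tilde\gamma}\,dt\Bigr|=\sqrt\rho\,|\tilde\gamma(1)-x|>0 .
\]
Thus no path realizes the infimum $0$, and the same conclusion holds a fortiori among regular paths.

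The only delicate point is the identification $L(\gamma)=\sqrt\rho\int|\dot{\tilde\gamma}|$. This requires global lifting, which Lemma~\ref{lem:global-lift} provides, together with the fibrewise isometry, which is pointwise and routine. Everything else reduces to density and the triangle inequality for integrals.
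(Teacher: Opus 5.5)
Your proof is correct and follows essentially the same route as the paper: you lift along $q$ by Lemma~\ref{lem:global-lift}, use that each $T_xq$ is an isometry to identify $L(\gamma)$ with $\sqrt\rho$ times the Euclidean length of the lift, and conclude by density of $\Lambda_\alpha$ using affine lifts (for the regular-path variant with $[x]=[y]$, pick $\eta\neq x-y$, as the paper does). Your non-attainment bound $L(\gamma)\ge\sqrt\rho\,|\tilde\gamma(1)-x|>0$ is only a slightly more direct form of the paper's observation that $L(\gamma)=0$ would force $\tilde\gamma'=0$ on $[0,1]$.
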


\begin{proof}
Let $\gamma\in\Path(\Tor_\alpha;[x],[y])$ and let $\tilde\gamma$ be its lift with $\tilde\gamma(0)=x$ (Lemma~\ref{lem:global-lift}); then $\tilde\gamma(1)=y+\eta$ for some $\eta\in\Lambda_\alpha$.
Since each $T_xq$ is an isometry, $L(\gamma)=\sqrt\rho\int_0^1|\tilde\gamma'(t)|\,dt\ge\sqrt\rho\, |y+\eta-x|$, with equality for the affine path from $x$ to $y+\eta$.
Every $\eta\in\Lambda_\alpha$ arises in this way, so
\[
  d_{g_\rho}([x],[y])=\inf_{\eta\in\Lambda_\alpha}\sqrt\rho\,|y+\eta-x|=0
\]
by density.
The affine paths with $y+\eta\neq x$ are regular, and the infimum of $|y+\eta-x|$ over $\eta\in\Lambda_\alpha\setminus\{x-y\}$ is still $0$ because $\Lambda_\alpha\setminus\{x-y\}$ is dense; this gives the last assertion.
(For $[x]=[y]$ and $y=x$ this means that there are regular loops at $[x]$ of arbitrarily small length.)
If $L(\gamma)=0$, then $\tilde\gamma'=0$ on $[0,1]$, so $[y]=q(\tilde\gamma(1))=q(\tilde\gamma(0))=[x]$.
\end{proof}

\begin{rem}[Comparison with \cite{KSS}]\label{rem:KSS-distance}
The pseudo-distance $d_g$ is the one of \cite[(3.6)]{KSS}; on elastic spaces the weak Riemannian metrics of \cite{KSS} are the maps $T_2M\to\R$ that are bilinear, symmetric and positive on the fibres \cite[Rem.~4.4]{Shio}.
By Remark~\ref{rem:KSS-definite}, $g_\rho$ is definite in the sense of \cite[Def.~3.7]{KSS} with respect to the generating family $\{q\}$.
Proposition~\ref{prop:d-zero} is consistent with the results of \cite{KSS}:
\begin{enumerate}[label=\textup{(\alph*)},leftmargin=*]
\item \cite[Thm.~3.27]{KSS} asserts that $d_g$ is a distance if $g$ is definite with respect to a generating family which \emph{separates points} in the sense of \cite[Def.~3.25]{KSS}.
  By Proposition~\ref{prop:d-zero}, no generating family of the diffeology of $\Tor_\alpha$ with respect to which $g_\rho$ is definite separates points.
\item \cite[Thm.~3.21]{KSS} asserts that on a connected diffeological space the D-topology is finer than the topology $\mathcal O_d$ defined by $d_g$.
  The space $\Tor_\alpha$ is connected, being the image of $\R$.
  Since the D-topology of $\Tor_\alpha$ is indiscrete (Lemma~\ref{lem:irr-basic}(iv)), every open $d_{g_\rho}$-ball is all of $\Tor_\alpha$; this gives a second proof that $d_{g_\rho}\equiv0$.
\end{enumerate}
In \cite[Ex.~4.3]{KSS} a definite metric on the line with a doubled half-line has pseudo-distance zero between a single pair of distinct points; on $\Tor_\alpha$ it vanishes between all pairs.
\end{rem}

\begin{rem}\label{rem:pi1}
The irrational torus is a standard example showing that a diffeology carries substantially more information than its D-topology.
The D-topology of $\Tor_\alpha$ is indiscrete (Lemma~\ref{lem:irr-basic}(iv)), as is the quotient topology of $\R/\Lambda_\alpha$; nevertheless its diffeological homotopy theory is nontrivial, since $\pi_1(\Tor_\alpha)\cong\Lambda_\alpha\cong\Z^2$ \cite[art.~8.38]{PIZ13}.
As shown in this section, it also carries a nontrivial Riemannian geometry: the metrics $\rho\,dt^2$ on $\R$ descend to the metrics $g_\rho=\rho\,\vartheta^2$ on $\Tor_\alpha$, each of which has a unique Levi-Civita connection, namely $K_0$, with complete geodesic flow, although the induced pseudo-distance vanishes identically.
Thus the diffeology retains both homotopical and differential-geometric information that is invisible to the D-topology.
\end{rem}

\end{document}